\newtheoremstyle{my theoremstyle}
{0.8em}                    % Space above
    {0.8em}                    % Space below
    {\itshape}                   % Body font
    {}                           % Indent amount
    {\scshape}                   % Theorem head font
    {.}                          % Punctuation after theorem head
    {.5em}                       % Space after theorem head
    {}  % Theorem head spec (can be left empty, meaning ‘normal’)
\newtheoremstyle{dfn}
{0.8em}                    % Space above
    {0.8em}                    % Space below
    {}                   % Body font
    {}                           % Indent amount
    {\scshape}                   % Theorem head font
    {.}                          % Punctuation after theorem head
    {.5em}                       % Space after theorem head
    {}  % Theorem head spec (can be left empty, meaning ‘normal’)
\theoremstyle{my theoremstyle}
   \newtheorem{thm}{Theorem}[section]
   \newtheorem{lem}[thm]{Lemma}
   \newtheorem{prop}[thm]{Proposition}
   \newtheorem{cor}[thm]{Corollary}
\theoremstyle{dfn}
   \newtheorem{dfn}[thm]{Definition}
\theoremstyle{remark}   
   \newtheorem{exa}[thm]{{\scshape Example}}
   \newtheorem{rem}[thm]{{\scshape Remark}}
\newcommand{\dsum}{\displaystyle \sum}
\renewcommand{\1}{\mathbbm{1}}
\newcommand{\C}{\mathbb{C}}
\newcommand{\Q}{\mathbb{Q}}
\newcommand{\Z}{\mathbb{Z}}
\newcommand{\R}{\mathbb{R}}
\newcommand{\F}{\mathbb{F}}
\renewcommand{\P}{\mathbb{P}}
\renewcommand{\a}{\alpha}
\renewcommand{\b}{\beta}
\renewcommand{\c}{\gamma}
\newcommand{\e}{\varepsilon}
\renewcommand{\d}{\delta}
\newcommand{\p}{\varphi}
\newcommand{\ol}[1]{\overline{#1}}
\newcommand{\hF}[5]{{}_{#1}F_{#2}\Bigg({#3\atop#4};#5\Bigg)}
\newcommand{\hFred}[3]{F_{\rm red}\Bigg({#1\atop#2};#3\Bigg)}
\newcommand{\aff}{{\rm aff}}
\begin{document}
\title[Diagonal hypersurfaces and hypergeometric functions]{Artin $L$-functions of diagonal hypersurfaces and generalized hypergeometric functions over finite fields}
\author{Akio Nakagawa}
\date{\today}
\address{Department of Mathematics and Informatics, Graduate school of Science, Chiba university, Inage, Chiba, 263-8522, Japan}
\email{akio.nakagawa.math@icloud.com}
\keywords{Hypergeometric functions; Diagonal hypersurfaces; Dwork hypersurfaces; Artin $L$-functions}
\subjclass{11T23, 33C90, 14J70}
\maketitle
\begin{abstract}
We compute the Artin $L$-function of a diagonal hypersurface $D_\lambda$ over a finite field associated to a character of a finite group acting on $D_\lambda$, and under some condition, express it in terms of hypergeometric functions and Jacobi sums over the finite field. As an application, we derive certain relations among hypergeometric functions over different finite fields from the Grothendieck-Lefschetz trace formula.
\end{abstract}
%\tableofcontents

%Introduction%
\section{Introduction}
Let $D_\lambda$ be a diagonal hypersurface over a finite field $\F_q$ defined by the homogeneous equation
\begin{equation*}
X_1^d+\cdots+X_n^d=d\lambda X_1^{h_1}\cdots X_n^{h_n},
\end{equation*}
where $d,n\geq2$, $h_i\geq1$, $\sum_i h_i=d$, gcd$(d,h_1,\dots,h_n)=1$ and $\lambda\in\F_q$. %The Dwork hypersurface is the case when $n=d$. 
Assume that $d\mid q-1$ and let $\mu_d\subset\F_q^\times$ be the group of $d$th roots of unity. Then a subquotient $G$ of $(\mu_d)^n$ acts on $D_\lambda$.
%Fix an algebraic closure $\ol{\F_q}$ of $\F_q$ and let $\F_{q^r}\subset\ol{\F_q}$ be the degree $r$ extension of $\F_q$. In this paper, we consider the number $N_r(D_\lambda;\chi)\ (r\geq1)$ of rational points associated to $\chi\in\widehat{G}$, which satisfies that
%\begin{equation*}
%N_r(D_\lambda)=\sum_{\chi\in\widehat{G}}N_r(D_\lambda;\chi)\ \ \ (N_r(D_\lambda):=\#D_\lambda(\F_{q^r})).
%\end{equation*}
The $l$-adic \'{e}tale cohomology decomposes into $\chi$-eigenspaces 
$$H^*(\ol{D_\lambda};\ol{\Q_l})=\bigoplus_{\chi\in\widehat{G}}H^*(\ol{D_\lambda};\ol{\Q_l})(\chi),$$
where $\widehat{G}=\operatorname{Hom}(G,\ol{\Q}^\times)$ is the character group. Accordingly, the zeta function decomposes into the Artin $L$-functions as 
$$Z(D_\lambda,t)=\prod_{\chi\in\widehat{G}} L(D_\lambda,\chi;t).$$
The first aim of this paper is to describe $L(D_\lambda,\chi;t)$ in terms of hypergeometric functions over finite fields.

Recall that a classical generalized hypergeometric function ${}_mF_n$ over the complex numbers is defined by a power series
\begin{equation*}
\hF{m}{n}{a_1,\dots,a_m}{b_1,\dots,b_n}{z}:=\sum_{k=0}^\infty \dfrac{(a_1)_k\cdots(a_m)_k}{(1)_k(b_1)_k\cdots(b_n)_k}z^k.
\end{equation*}
Here, $(a)_k=\Gamma(a+k)/\Gamma(a)$ where $\Gamma(s)$ is the gamma function and the parameters $a_i, b_j$ are complex numbers with $b_j\not\in \Z_{\leq0}$. Hypergeometric functions over finite fields are defined independently by Koblitz \cite{N.koblitz}, Greene \cite{JG} and McCarthy \cite{Mc} for $m=n+1$, and by Katz \cite{Katz} and Otsubo \cite{Otsubo} in general. We use Otsubo's definition which coincides with McCarthy's one when $m=n+1$ (see Definition \ref{def of F}). Such a function is a map from $\F_q$ to $\ol{\Q}$ whose parameters are characters of $\F_q^\times$, and the definition relies on the analogy between the gamma function and the Gauss sum.

The Artin $L$-function $L(D_\lambda,\chi;t)$ is a generating function of $N_r(D_\lambda;\chi)$ ($r\geq1$), the number of $\F_{q^r}$-rational points associated to $\chi$. For the subvariety $D_\lambda^*$ of $D_\lambda$ defined by $X_1\cdots X_n\neq0$, Koblitz \cite[(3.2)]{N.koblitz} expressed $N_r(D_\lambda^*;\chi)$ in terms of Jacobi sums (see Proposition \ref{kob}). 
We extend this result to $N_r(D_\lambda;\chi)$ and express them in terms of hypergeometric functions (Theorem \ref{main2} and Corollary \ref{cor of main2}). Our result is a  refinement of Salerno's result \cite[Theorem 4.1]{salerno} for $\#D_\lambda(\F_{q^r})=\sum_\chi N_r(D_\lambda;\chi)$.
There is also a result of Miyatani \cite{Miyatani} for more general hypersurfaces.

When $d=n$ (then $h_i=1$ for all $i$), $D_\lambda$ is called the Dwork hypersuface. In this case, we have a more precise formula. The following is a special case of Theorem \ref{N of dwork 2}.
A character $\chi\in\widehat{G}$ is indexed by an element $w=(w_1,\dots,w_n)\in(\Z/d\Z)^n$. 
For $\a\in\widehat{\F_q^\times}$ and $r\geq 1$, we write $\widetilde{\a}=\a\circ {\rm N}_{\F_{q^r}/\F_q}\in\widehat{\F_{q^r}^\times}$.
Fix a character $\p_d\in \widehat{\F_q^\times}$ of exact order $d$. 
Let $\e\in\widehat{\F_q^\times}$ and $\1\in\widehat{G}$ be the trivial characters. 

\begin{thm} Suppose that $\lambda\in\F_q^\times$ and none of $w_i$ is $0$. Then 
\begin{align*}
&N_r(D_{\lambda};\chi)=(-1)^dj(\p_d^w)^r\hFred{\widetilde{\p}_d^{w_1},\widetilde{\p}_d^{w_2},\dots,\widetilde{\p}_d^{w_d}}{\widetilde\e,\widetilde{\p}_d,\dots,\widetilde{\p}_d^{d-1}}{\lambda^d}_{q^r}
\end{align*}
if $\chi\neq\1$, and if $\chi=\1$ the right-hand side is added by $\dfrac{1-q^{r(d-1)}}{1-q^r}$. Here, $F_{\rm red}$ is the reduced hypergeometric function and $j(\p_d^w)$ is a Jacobi sum (see Definition \ref{red F} and subsection 3.3). 
\end{thm}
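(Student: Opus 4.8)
The statement is the Dwork specialization ($d=n$, all $h_i=1$) of the general point-count formula of Corollary~\ref{cor of main2}, so the plan is to invoke that formula and then carry out the simplifications particular to this case. Recall that Corollary~\ref{cor of main2} rests on two inputs: Koblitz's expression (Proposition~\ref{kob}) for $N_r(D_\lambda^*;\chi)$ on the open stratum $X_1\cdots X_n\neq0$ as a weighted sum of Jacobi sums, together with an analysis of the boundary $D_\lambda\setminus D_\lambda^*$. First I would record what Koblitz's formula becomes once $d=n$ and every $h_i=1$: the defining monomial is the full product $X_1\cdots X_d$, and the Jacobi sums that occur over $\F_{q^r}$ are the symmetric ones $J(\widetilde{\p}_d^{w_1},\dots,\widetilde{\p}_d^{w_d})$, with $\widetilde{\a}=\a\circ\mathrm{N}_{\F_{q^r}/\F_q}$.

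Next I would rewrite each Jacobi sum as a quotient of Gauss sums and apply the multiplication (Hasse--Davenport product) formula for the Gauss sums attached to $\p_d$. Under this rearrangement the numerator characters organize themselves into $\widetilde{\p}_d^{w_1},\dots,\widetilde{\p}_d^{w_d}$ and the denominator characters into the complete list $\widetilde\e,\widetilde{\p}_d,\dots,\widetilde{\p}_d^{d-1}$, while the common leftover factor, after applying the Hasse--Davenport relation to descend from $\F_{q^r}$ to $\F_q$, works out to the prefactor $(-1)^d j(\p_d^w)^r$. Comparing the remaining character sum with Otsubo's definition (Definition~\ref{def of F}) identifies it with the hypergeometric function over $\F_{q^r}$, and the hypothesis that no $w_i$ is $0$ guarantees that every numerator character $\widetilde{\p}_d^{w_i}$ is nontrivial, which is precisely what lets the expression collapse to the reduced function $F_{\rm red}$ of Definition~\ref{red F} with no degenerate terms.

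It then remains to treat the boundary and the trivial character. On $D_\lambda\setminus D_\lambda^*$ at least one coordinate vanishes, and since every $h_i=1$ the monomial $d\lambda X_1\cdots X_n$ is then killed, so each boundary stratum is cut out by a $\lambda$-independent Fermat equation $\sum_{i\in S}X_i^d=0$ in fewer variables; cohomologically these strata supply only the ambient Lefschetz classes coming from $H^*(\P^{d-1})$, all of which lie in the trivial eigenspace. Hence for $\chi\neq\1$ the boundary contributes nothing beyond the reduced hypergeometric term, whereas for $\chi=\1$ it adds $\#\P^{d-2}(\F_{q^r})=\tfrac{1-q^{r(d-1)}}{1-q^r}$, which accounts for the case distinction in the statement.

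The step I expect to be the main obstacle is the middle one: matching the precise index set and weights of Koblitz's Jacobi-sum formula with the summation range defining Otsubo's ${}_dF_{d-1}$, and tracking every Gauss-sum factor through the Hasse--Davenport relations so that the sign $(-1)^d$ and the prefactor $j(\p_d^w)^r$ emerge with exactly the right normalization. By contrast, the boundary count is a finite and essentially routine bookkeeping once the open-stratum identification is secured.
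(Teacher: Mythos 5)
There is a genuine gap, and it sits precisely at the two steps your third paragraph dismisses as routine. First, your claim that the hypothesis $w_i\neq0$ makes the expression ``collapse to the reduced function $F_{\rm red}$ with no degenerate terms'' is false: in the Dwork case every numerator parameter $\widetilde{\p}_d^{w_i}$ is a power of $\widetilde{\p}_d$, and the denominator list $\widetilde\e,\widetilde{\p}_d,\dots,\widetilde{\p}_d^{d-1}$ is the \emph{complete} list of such powers, so each numerator parameter coincides with a denominator parameter no matter what the $w_i$ are. Passing from the (non-reduced) $F$ supplied by Theorem \ref{main2} to $F_{\rm red}$ therefore forces an application of Lemma \ref{red formula}, which produces the nonzero remainder $\sum_{c\in S}\prod_{i=1}^{d}(\p_d^{w_i})_{\p_d^{-c}}/(\p_d^{i})_{\p_d^{-c}}$, where $S=\{w_1,\dots,w_d\}$; the hypothesis $w_i\neq0$ only guarantees that $\e$ is not among the cancelled parameters, i.e.\ $\d=0$ in Lemma \ref{red formula} and $\d(w)=0$ in the prefactor $q^{\d(w)}j(\p_d^w)$ of Theorem \ref{N of dwork 2} --- it does not kill the remainder. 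Second, your cohomological assertion that the boundary strata ``supply only the ambient Lefschetz classes'' lying in the trivial eigenspace is also false: by Weil's computation (Lemma \ref{ND0}), for $\chi^w\neq\1$ the Fermat boundary contributes $C=(-1)^{d-1}\sum j(\p_d^{w'})$ over all $w'\sim w$ with some but not all $w_i'=0$, and such $w'$ always exist here (take $w'=w-c(1,\dots,1)$ for any value $c$ attained by the $w_i$); each such Jacobi sum is a nonzero product of Gauss sums divided by $q$. For instance, $d=5$, $w=(1,2,4,4,4)$ gives $w'=(0,1,3,3,3)$ and the contribution $g(\p_5)g(\p_5^3)^3/q\neq0$.

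The paper's actual proof of Theorem \ref{N of dwork 2} (ii) consists exactly of showing that these two quantities you dropped cancel each other: a Gauss-sum manipulation identifies the reduction remainder with $j(\p_d^w)^{-1}\sum_{w'\sim w,\ w_i'=0\text{ for some }i}j(\p_d^{w'})$, so that after multiplying by the prefactor $(-1)^d j(\p_d^w)$ it annihilates the boundary term $C$ coming from Theorem \ref{main2}. Your final formula is thus correct only because the two omissions are complementary; as written, neither your open-stratum identification nor your boundary count closes the argument. A smaller point: Corollary \ref{cor of main2}, which you invoke at the outset as ``the general point-count formula,'' applies only to $w\sim0$, i.e.\ $\chi=\1$ --- there it does yield the $\chi=\1$ case directly, since $w\sim0$ forces $C=0$ and, with all $w_i=m\neq0$, $\d(m)=0$; but for $\chi\neq\1$ you must start from Theorem \ref{main2}, where $C$ is genuinely present and must be confronted.
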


%Transformation and reduction  formulas of hypergeometric functions over a fixed finite field are known by works of Greene, McCarthy, Otsubo and others (cf. \cite{JG}, \cite{Mc}, \cite{Otsubo}, \cite{FLRST}). But it seems that no relations among hypergeometric functions over different finite fields are known. 

By the Grothendieck-Lefschetz trace formula, when $D_\lambda$ is non-singular, $L(D_\lambda,\chi;t)$ is essentially the characteristic polynomial of the $q$-Frobenius $F$ acting on the primitive middle cohomology group $H^{n-2}_{\rm prim}(\ol{D_\lambda};\ol{\Q_l})(\chi)$, and $N_r(D_\lambda;\chi)$ is essentially the trace of $F^r$.
The dimension, say $k$, of the cohomology is determined by Katz (see Lemma \ref{dim of H}). Therefore,  $N_r(D_\lambda;\chi)$ are expressed as polynomials in those for $r=1,\dots,k$.
As a result, we obtain relations among hypergeometric functions over different finite fields. 
We may regard such relations as analogues of the Davenport-Hasse relation for Gauss sums over different finite fields (see Proposition \ref{DH}). 

In the case of Dwork hypersurfaces, such formulas become extremely simple (Theorem \ref{main 4}).
As a special case when $k=2$, we have the following relation among finite field analogues of Gauss hypergeometric functions.
\begin{thm}\label{2F1 relation}
Let $a,b,c\in\Z/d\Z$ satisfy $a,b\not\in\{0, c\}$, $c\neq0$ and 
$$c-a-b=\dfrac{d(d-1)}{2}\mod{d}.$$ 
For $\lambda\in\F_q^\times-\mu_d$ and $r\geq1$, we have 
\begin{align*}
&\hF{}{}{\widetilde{\p}_d^a,\widetilde{\p}_d^b}{\widetilde\e,\widetilde{\p}_d^c}{\lambda^d}_{q^r}\\
&=P_r\left(\hF{}{}{\p_d^a,\p_d^b}{\e,\p_d^c}{\lambda^d}_{q},\dfrac{1}{2}\Big(\hF{}{}{\p_d^a,\p_d^b}{\e,\p_d^c}{\lambda^d}_q^2-\hF{}{}{\widetilde{\p}_d^a,\widetilde{\p}_d^b}{\widetilde\e,\widetilde{\p}_d^c}{\lambda^d}_{q^2}\Big)\right).
\end{align*}
Here, $P_r\in\Z[x,y]$ is the unique polynomial satisfying $P_r(\a+\b,\a\b)=\a^r+\b^r$.
\end{thm}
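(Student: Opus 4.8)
Write $\mathcal F_r:=\hFF{\widetilde{\p}_d^a,\widetilde{\p}_d^b}{\widetilde\e,\widetilde{\p}_d^c}{\lambda^d}_{q^r}$ for the quantity on the left-hand side. The plan is to reduce the entire statement to the single assertion
\[ \mathcal F_r=\a^r+\b^r\qquad(r\geq1) \]
for two numbers $\a,\b\in\ol\Q$ that do not depend on $r$. Granting this, the theorem becomes a formal identity of symmetric functions: with $\a,\b$ as in the defining relation $P_r(\a+\b,\a\b)=\a^r+\b^r$, Newton's identity gives $\a\b=\tfrac12\big((\a+\b)^2-(\a^2+\b^2)\big)=\tfrac12(\mathcal F_1^2-\mathcal F_2)$, while $\a+\b=\mathcal F_1$. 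Hence the two arguments of $P_r$ appearing in the statement are exactly $\a+\b$ and $\a\b$, so $P_r(\mathcal F_1,\tfrac12(\mathcal F_1^2-\mathcal F_2))=\a^r+\b^r=\mathcal F_r$. Thus it suffices to produce the pair $\a,\b$.

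These I would take to be the eigenvalues of the $q$-Frobenius on a two-dimensional cohomology group. Since $\lambda\in\F_q^\times-\mu_d$, the Dwork hypersurface $D_\lambda$ is non-singular, and the associated character $\chi$ is non-trivial by the hypotheses on $a,b,c$; therefore only the primitive middle cohomology contributes to the $\chi$-eigenspace, and the Grothendieck-Lefschetz trace formula reads
\[ N_r(D_\lambda;\chi)=(-1)^{d}\operatorname{Tr}\!\big(F^r\mid H^{d-2}_{\rm prim}(\ol{D_\lambda};\ol{\Q_l})(\chi)\big). \]
The decisive input is that the conditions $a,b\notin\{0,c\}$, $c\neq0$ and the balancing relation $c-a-b\equiv\tfrac{d(d-1)}2\pmod d$ are precisely those under which Katz's dimension formula (Lemma \ref{dim of H}) gives $\dim H^{d-2}_{\rm prim}(\chi)=2$; equivalently, they are exactly the conditions under which the reduced hypergeometric function of Theorem \ref{N of dwork 2} collapses to a genuine ${}_2F_1$. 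Writing $\a,\b$ for the two Frobenius eigenvalues on this space, the trace above equals $\a^r+\b^r$.

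It remains to match normalizations, and this is where the real work lies. Theorem \ref{N of dwork 2} gives $N_r(D_\lambda;\chi)=(-1)^d j(\p_d^w)^r\,\hFred{\widetilde{\p}_d^a,\widetilde{\p}_d^b}{\widetilde\e,\widetilde{\p}_d^c}{\lambda^d}_{q^r}$, where the prefactor is the $r$-th power of a single Jacobi sum $j(\p_d^w)$ over $\F_q$; comparing with the trace formula and cancelling $(-1)^d$ shows that the reduced function is the power sum $(\a/j(\p_d^w))^r+(\b/j(\p_d^w))^r$. To pass to the non-reduced $\mathcal F_r$ one multiplies by the ratio of Gauss sums recorded in Definition \ref{red F}, and the only point to verify is that this ratio, formed over $\F_{q^r}$, is the $r$-th power of the same ratio over $\F_q$. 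This is exactly the Hasse-Davenport relation (Proposition \ref{DH}): it replaces each $\F_{q^r}$-Gauss sum by $(-1)^{r-1}$ times the $r$-th power of the $\F_q$-Gauss sum, and since the numerator and denominator of the balanced ratio contain the same number of Gauss sums, the signs $(-1)^{r-1}$ cancel, leaving a perfect $r$-th power. Absorbing this factor into $\a$ and $\b$ yields fixed numbers with $\mathcal F_r=\a^r+\b^r$, which by the first paragraph completes the argument. I expect the main obstacle to be precisely this bookkeeping: tracking the trivial-character Gauss sum $g(\widetilde\e)$ and the sign factors through Davenport-Hasse to confirm that every constant separating the geometric trace from the analytically normalized ${}_2F_1$ is a genuine $r$-th power, so that the order-two power-sum structure survives intact.
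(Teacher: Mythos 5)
Your route is the paper's own: Grothendieck--Lefschetz plus Katz's dimension count (Lemma \ref{dim of H}) to produce a two-dimensional Frobenius eigenspace, Theorem \ref{N of dwork 2} to identify $(-1)^dj(\p_d^w)^rF^w_{r,\rm red}(\lambda)$ with $N_r(D_\lambda;\chi^w)$, Davenport--Hasse to see that the normalizing constant is a perfect $r$-th power, and Newton's identities for $k=2$; this is precisely the specialization of Theorem \ref{main 4} to $w=(a,b,\check{0},1,\dots,\check{c},\dots,d-1)$. But one step as written is false: the hypotheses do \emph{not} force $\chi\neq\1$. For $d=3$ they force $a=b$ equal to the unique element of $\{1,2\}\setminus\{c\}$, so $w=(a,a,a)\sim 0$ and $\chi^w=\1$ --- exactly the Hesse elliptic curve case that the theorem's hypotheses admit and that the paper features in Example \ref{d=3}. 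For trivial $\chi$ your trace formula acquires the non-primitive contributions $1+q^r$ (from $H^0$ and $H^2$), and correspondingly part (i) rather than part (ii) of Theorem \ref{N of dwork 2} applies, carrying the extra term $\frac{1-q^{r(d-1)}}{1-q^r}$. The conclusion survives because these two corrections cancel --- this is what the paper's equations \eqref{N=a} and \eqref{F=Tr} encode uniformly in $\chi$, with $j'(\p_d^w)=j(\p_d^w)$ since $\d(w)=0$ --- but your argument silently excludes a case the statement includes. (For $d\geq4$ the set $\{1,\dots,d-1\}\setminus\{c\}$ has at least two elements, $w$ is non-constant, and your non-triviality claim is correct.)

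The ``real work'' you anticipate in matching normalizations is largely illusory, and your description of it misreads Definition \ref{red F}: reduction there is literal cancellation of the parameters $\widetilde{\p}_d^i$, $i\in\{1,\dots,d-1\}\setminus\{c\}$, common to numerator and denominator --- no ratio of Gauss sums enters --- so for this $w$ the reduced function \emph{is} the ${}_2F_1$ of the statement on the nose. The Gauss-sum remainder terms of the genuine reduction lemma (Lemma \ref{red formula}) were already absorbed inside the proof of Theorem \ref{N of dwork 2}, where they cancel against the Jacobi-sum constant $C$ of Theorem \ref{main2}. Moreover, with the paper's sign convention $g(\eta)=-\sum_x\eta(x)\psi(x)$, Davenport--Hasse reads $g(\widetilde{\eta})=g(\eta)^r$ with no $(-1)^{r-1}$ to track, so $j(\widetilde{\p}_d^{w_1},\dots,\widetilde{\p}_d^{w_d})=j(\p_d^w)^r$ immediately; your sign bookkeeping, while reaching the right conclusion, is solving a problem the normalization has already removed. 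Your closing symmetric-function step ($A+B=\mathcal{F}_1$, $AB=\tfrac12(\mathcal{F}_1^2-\mathcal{F}_2)$, hence $\mathcal{F}_r=P_r(\mathcal{F}_1,\tfrac12(\mathcal{F}_1^2-\mathcal{F}_2))$) is correct and coincides with the paper's $k=2$ instance of Theorem \ref{main 4}.
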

Moreover, we will closely look at the cases $d=3$ (elliptic curves) and $d=4$ (K3 surfaces). Then $k\leq 2$ and $k\leq 3$ respectively for any $\chi$. We show, however, that only one hypergeometric function over $\F_q$ is sufficient to express those over $\F_{q^r}$, hence to express $N_r(D_\lambda;\chi)$ and $L(D_\lambda,\chi;t)$ (Example \ref{d=3}, Corollary \ref{Z(D4,t)} and Theorem \ref{Main5}).

%At the end of this paper, we express the zeta function of the Dwork K3 surface only in terms of a hypergeometric function over $\F_q$ of type ${}_3F_2$ (Corollary \ref{Z(D4,t)} and Theorem \ref{Main5}).
%Preliminaries%
\section{Preliminaries}

\subsection{Zeta functions and Artin $L$-functions}
In this subsection, we recall the definitions of zeta functions and Artin $L$-functions, and their properties. For more details, see \cite{serre} and \cite{weil}.

Let $\F_q$ be a finite field with $q$ elements of characteristic $p$. Let $\F_{q^r}$ be the degree $r$ extension of $\F_q$ in a fixed algebraic  closure $\ol{\F_q}$ of $\F_q$. Let $V$ be a variety over $\F_q$ and put 
$$N_r(V) = \#V(\F_{q^r})\quad (r\in\Z_{>0}).$$
Then, {\it the zeta function of $V$} is defined by
\begin{equation*}\label{zeta} 
Z(V,t)=\exp\left(\sum_{r=1}^{\infty}\dfrac{N_r(V)}{r}t^r\right)\ \in \Q[[t]].
\end{equation*}

Let $G$ be a finite abelian group, and suppose that $G$ acts on $V$ over $\F_q$. Let $F$ be the $q$-Frobenius acting on $V(\ol{\F_q})$. We write $gF$ for the composition.
For $\chi\in \widehat{G}$ and $r\in\Z_{>0}$, put
\begin{align*}
\Lambda(g^{-1}F^r)&:=\#\{x\in V(\ol{\F_q})\mid g^{-1}F^r(x)=x\},\nonumber\\
N_r(V;\chi)&:=\dfrac{1}{\#G}\sum_{g\in G}\chi(g)\Lambda(g^{-1}F^r)\ \in\ol{\Q}.
\end{align*}
{\it The Artin $L$-function of $V$ associated to $\chi$} is defined by
\begin{equation*}\label{artinl}
L(V,\chi;t)=\exp\left(\sum_{r=1}^{\infty}\dfrac{N_r(V;\chi)}{r}t^r\right)\ \in \ol{\Q}[[t]].
\end{equation*}
Since $N_r(V)=\sum_{\chi\in\widehat{G}}N_r(V;\chi)$,
we have $Z(V,t)=\prod_{\chi \in \widehat{G}} L(V,\chi;t)$.
Let $G_0$ be a finite abelian group which acts on $V$ and suppose that $G\subset G_0$. %Noting that $\#{\rm Ker}\rho=\# G_0/\# G$, where $\rho:\widehat{G_0}\rightarrow \widehat{G}$ is the restriction map, we have, for $\chi\in\widehat{G}$,
%\begin{equation*}
%\sum_{\substack{\chi_0
%\in\widehat{G_0}\\\chi_0|_G=\chi}}\chi_0(g_0)=
%\begin{cases}
%0&(g_0\not\in G),\vspace{5pt}\\
%\dfrac{\# G_0}{\# G}\chi(g_0)&(g_0\in G).
%\end{cases}
%\end{equation*}
Then, the following holds (cf. \cite[(11)]{serre}):
\begin{equation}\label{N=sum N}
N_r(V;\chi)=\sum_{\substack{\chi_0
\in\widehat{G_0}\\\chi_0|_G=\chi}} N_r(V;\chi_0).
\end{equation}

%subsection Gauss and Jacobi sums%
\subsection{Gauss and Jacobi sums}
For any $\eta\in\widehat{\F_q^\times}=\operatorname{Hom}(\F_q^\times,\ol{\Q}^\times)$, we set $\eta(0)=0$ and put
\begin{align*}
\delta(\eta)=\begin{cases}1&({\rm if\ }\eta=\varepsilon),\\ 0&({\rm if\ }\eta\neq \varepsilon).\end{cases}
\end{align*}
Fix a non-trivial additive character $\psi\in{\rm Hom}(\F_q,\ol{\Q}^\times)$.
For $\eta,\eta_1,\dots,\eta_n  \in \widehat{\F_q^\times}$ with $n\geq1$, define the Gauss sum $g(\eta)$ and the Jacobi sum $j(\eta_1,\dots,\eta_n)$, which are finite field analogues of the gamma and beta functions respectively, by 
\begin{align*}
g(\eta)&=-\sum_{x\in \F_q^\times} \eta(x)\psi(x)\ \in\Q(\mu_{p(q-1)}),\\
j(\eta_1,\dots,\eta_n)&=(-1)^{n-1}\sum_{\substack{x_i\in\F_q^\times\\ x_1+\dots+x_n=1}}\eta_1(x_1)\cdots\eta_n(x_n)\ \in\Q(\mu_{q-1}).
\end{align*}
Since $\sum_{x\in \F_q} \psi(x)=0$ and $\psi(0)=1$, we have
\begin{align*}
g(\varepsilon)=1.
\end{align*}
Define 
\begin{equation*}
g^\circ(\eta):=q^{\d(\eta)}g(\eta).
\end{equation*}
The following identities are well-known (cf. \cite[Proposition 2.2]{Otsubo}). For each $\eta\in\widehat{\F_q^\times}$, 
\begin{equation}\label{Gauss sum thm}
g(\eta)g^\circ(\eta^{-1})=\eta(-1)q,
\end{equation}
and if $\eta\neq\e$ then
\begin{equation}\label{abs.val. of g}
|g(\eta)|=\sqrt{q}.
\end{equation}
For $n\geq1$, 
\begin{equation}\label{Jequ(i)}
j(\underbrace{\varepsilon,\dots,\varepsilon}_{n})=\dfrac{1-(1-q)^n}{q}.
\end{equation}
For $\eta_1,\dots,\eta_n\in\widehat{\F_q^\times}$ with not all $\eta_i=\varepsilon$,
\begin{equation}\label{gJ=G}
j(\eta_1,\dots,\eta_n)=\dfrac{g(\eta_1)\cdots g(\eta_n)}{g^\circ(\eta_1\cdots\eta_n)}.
\end{equation}
In particular, if $\eta_1\cdots\eta_n=\varepsilon$, then by (\ref{Gauss sum thm}),
\begin{equation}\label{Jequ(ii)}
j(\eta_1,\dots,\eta_n)=\eta_n(-1)j(\eta_1,\dots,\eta_{n-1})\ (n\geq2).
\end{equation}
We prepare the following lemma obtained by an easy change of variables.
\begin{lem}\label{genJ} For $\eta_1,\dots,\eta_{n+1}\in\widehat{\F_q^\times}$, we have
\begin{align*}
\sum_{\substack{x_i,y\in\F_q^\times\\x_1+\dots+x_n=y}}&\eta_1(x_1)\cdots\eta_n(x_n)\eta_{n+1}(y)\\
&=\begin{cases}
(-1)^{n}(1-q)j(\eta_1,\dots,\eta_n)&(\eta_1\cdots\eta_{n+1}=\e),\vspace{5pt}\\
0&(\eta_1\cdots\eta_{n+1}\neq\e).
\end{cases}
\end{align*}
\end{lem}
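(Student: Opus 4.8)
The plan is to exploit the homogeneity of the constraint $x_1+\dots+x_n=y$ by rescaling every variable by $y$, which collapses the sum into a product of an orthogonality sum over $\F_q^\times$ and a genuine Jacobi sum.

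First I would introduce the substitution $t_i=x_i/y$ for $i=1,\dots,n$. Since the outer sum ranges only over $y\in\F_q^\times$, this is well-defined and sets up a bijection between the index set $\{(x_1,\dots,x_n,y):x_i,y\in\F_q^\times,\ \sum_i x_i=y\}$ and $\{(t_1,\dots,t_n,y):t_i,y\in\F_q^\times,\ \sum_i t_i=1\}$, with inverse $x_i=yt_i$. The condition $x_i\in\F_q^\times$ corresponds exactly to $t_i\in\F_q^\times$, and the constraint $\sum_i x_i=y$ becomes $\sum_i t_i=1$. The one point to verify here is that both membership conditions (on the $x_i$ and on $y$) are preserved in both directions, which holds precisely because $y\neq 0$.

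Next, using the multiplicativity of each $\eta_i$, I would factor the summand as
\begin{align*}
\eta_1(x_1)\cdots\eta_n(x_n)\eta_{n+1}(y)=(\eta_1\cdots\eta_{n+1})(y)\,\eta_1(t_1)\cdots\eta_n(t_n),
\end{align*}
so that the whole sum separates as
\begin{align*}
\left(\sum_{y\in\F_q^\times}(\eta_1\cdots\eta_{n+1})(y)\right)\left(\sum_{\substack{t_i\in\F_q^\times\\ t_1+\dots+t_n=1}}\eta_1(t_1)\cdots\eta_n(t_n)\right).
\end{align*}

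Finally I would evaluate the two factors independently. The first is the standard orthogonality relation for characters on $\F_q^\times$: it equals $q-1$ when $\eta_1\cdots\eta_{n+1}=\e$ and $0$ otherwise, which immediately yields the case distinction in the statement and disposes of the case $\eta_1\cdots\eta_{n+1}\neq\e$. The second factor is, directly from the definition of the Jacobi sum, equal to $(-1)^{n-1}j(\eta_1,\dots,\eta_n)$. Multiplying in the remaining case gives $(q-1)(-1)^{n-1}j(\eta_1,\dots,\eta_n)$, and rewriting $(q-1)(-1)^{n-1}=(-1)^n(1-q)$ matches the claimed formula. There is no genuine obstacle in this argument, as the authors indicate; the only places demanding a little care are checking that the rescaling is a bijection preserving both $\F_q^\times$-conditions and the final bookkeeping of the sign.
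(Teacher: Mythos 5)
Your proof is correct and is precisely the ``easy change of variables'' the paper alludes to (the paper states Lemma \ref{genJ} without proof): the substitution $x_i=yt_i$ factors the sum into a character orthogonality sum over $y$ times $(-1)^{n-1}j(\eta_1,\dots,\eta_n)$, exactly as you argue. The bijection check and the sign bookkeeping are both handled correctly, so nothing is missing.
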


We will use the Davenport-Hasse multiplication formula.
\begin{prop}[{cf. \cite[11.3]{BEW}}]\label{DHMF}  Let $m\in \Z_{>0}$ and suppose that $m\mid q-1$. For any $\eta \in \widehat{\mathbb{F}_q^\times}$, we have
\begin{align*}
\dfrac{\prod_{i=0}^{m-1}g(\p_m^i \eta)}{\prod_{i=0}^{m-1}g(\p_m^i)}\eta(m^{m})=g(\eta^m),
\end{align*}
where $\p_m$ is a character of exact order $m$.
\end{prop}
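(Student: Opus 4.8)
The plan is to clear denominators and prove the equivalent identity
$$P(\eta):=\prod_{i=0}^{m-1}g(\p_m^i\eta)=\eta(m^{-m})\,g(\eta^m)\prod_{i=1}^{m-1}g(\p_m^i),$$
where I have used $g(\e)=1$ to drop the $i=0$ factor from $\prod_{i=0}^{m-1}g(\p_m^i)$. First I would dispose of the degenerate case $\eta^m=\e$: then $\eta=\p_m^{-k}$ for some $k$, exactly the $i=k$ factor of $P(\eta)$ equals $g(\e)=1$, and both sides collapse to $\prod_{i=1}^{m-1}g(\p_m^i)$ once one notes $\eta(m^m)=\p_m(m)^{-km}=1$ (here $p\nmid m$ since $m\mid q-1$, so $m\in\F_q^\times$). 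From now on assume $\eta^m\neq\e$, equivalently $\p_m^i\eta\neq\e$ for every $i$.

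Before computing, it is worth recording a structural point that also shows the statement is well posed: since $\p_m^m=\e$, the product $P(\eta)$ is invariant under $\eta\mapsto\eta\p_m$ (this merely permutes the index $i$ cyclically), so $P(\eta)$ depends only on the coset $\eta\langle\p_m\rangle$, i.e. only on $\eta^m$; the same holds for $g(\eta^m)$ and for $\eta(m^m)=\eta(m)^m$ because $\p_m(m)^m=1$. Thus both sides are honest functions of $\xi:=\eta^m$, which is the right variable in which to organize the computation. As a check on the overall shape, (\ref{abs.val. of g}) gives $|P(\eta)|=q^{m/2}$ and $|g(\eta^m)\prod_{i=1}^{m-1}g(\p_m^i)|=q^{1/2}\cdot q^{(m-1)/2}=q^{m/2}$ when $\eta^m\neq\e$, so the two sides have equal absolute value and the identity amounts to matching a root of unity.

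For the identity itself I would expand $P(\eta)$ as the $m$-fold character sum
$$P(\eta)=(-1)^m\sum_{x_0,\dots,x_{m-1}\in\F_q^\times}\eta(x_0\cdots x_{m-1})\Big(\prod_{i=0}^{m-1}\p_m^i(x_i)\Big)\psi(x_0+\cdots+x_{m-1}),$$
substitute $x_i=x_0t_i$ for $i\geq1$, perform the inner sum over $x_0$ to produce a Gauss sum, and collapse the remaining sum over $t_1,\dots,t_{m-1}$ into Jacobi sums, recombining everything by means of (\ref{gJ=G}) and (\ref{Gauss sum thm}). The hard part is the exact constant: the substitution throws off a twist $\p_m^{m(m-1)/2}$, so one must show that it is absorbed and that precisely the factor $\eta(m^{-m})$ survives, with no spurious character left over. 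This constant is the finite-field avatar of the factor $m^{1/2-ms}$ in Gauss's multiplication theorem $\prod_{j=0}^{m-1}\Gamma\!\big(s+\tfrac{j}{m}\big)=(2\pi)^{(m-1)/2}m^{1/2-ms}\Gamma(ms)$, and determining it is exactly the delicate evaluation of $\prod_{i=0}^{m-1}g(\p_m^i)$ carried out in \cite[11.3]{BEW}; since the statement is quoted from there, I would ultimately match the constant against that computation.
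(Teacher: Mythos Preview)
The paper does not prove this proposition; it is quoted from \cite[11.3]{BEW} as a known result and used without proof. Your proposal is therefore not being compared against any argument in the paper.

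What you give is a reasonable outline: the degenerate case $\eta^m=\e$ is handled correctly, the structural remark that both sides depend only on the coset $\eta\langle\p_m\rangle$ is apt, and the absolute-value check via \eqref{abs.val. of g} is a useful sanity test. However, you yourself note that the heart of the matter --- pinning down the exact constant after the substitution $x_i=x_0t_i$ --- is ``exactly the delicate evaluation \dots\ carried out in \cite[11.3]{BEW}'' and that you would ``ultimately match the constant against that computation.'' In other words, your proposal, like the paper, ultimately defers to the cited reference for the actual proof. That is entirely appropriate for a standard result of this type, but it means what you have written is a sketch of the shape of the argument rather than a self-contained proof.
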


Let $r\geq1$ be an integer and let ${\rm N}_{\F_{q^r}/\F_q}$ be the norm map. For $\eta\in\widehat{\F_q^\times}$, put $\widetilde{\eta}=\eta\circ{\rm N}_{\F_{q^r}/\F_q}\in\widehat{\F_{q^r}^\times}$. Then, the following is well known as the Davenport-Hasse theorem.
\begin{prop}[cf. \cite{weil}]\label{DH}
For each $r\geq 1$, we have 
\begin{equation*} 
g(\widetilde{\eta})=g(\eta)^r.
\end{equation*}
\end{prop}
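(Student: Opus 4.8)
The case $\eta=\e$ is immediate, since then $\widetilde\eta=\e$ and $g(\e)=1=1^r$, so we assume $\eta\neq\e$. The plan is to encode all the Gauss sums $g(\widetilde\eta)$, $r\geq1$, into a single Euler product over the monic irreducible polynomials of $\F_q[x]$ and to evaluate that product in closed form. For a monic polynomial $h(x)=x^n-c_1x^{n-1}+\cdots+(-1)^nc_n\in\F_q[x]$ with roots $\a_1,\dots,\a_n\in\ol{\F_q}$, set
\[
\Lambda(h):=\eta\Big(\prod_{i}\a_i\Big)\psi\Big(\sum_i\a_i\Big)=\eta(c_n)\psi(c_1),
\]
and $\Lambda(1)=1$. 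Since the multiset of roots of a product is the union of the multisets of roots, $\Lambda$ is completely multiplicative, so the generating series $L(t):=\sum_{h\text{ monic}}\Lambda(h)t^{\deg h}$ admits the Euler product $L(t)=\prod_{P}(1-\Lambda(P)t^{\deg P})^{-1}$, the product ranging over monic irreducibles $P$.

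First I would compute $L(t)$ directly. The assignment $h\mapsto(c_1,\dots,c_n)$ is a bijection between monic polynomials of degree $n$ and $\F_q^n$, and $\Lambda(h)$ depends only on $c_1$ and $c_n$. Hence for $n\geq2$ the coefficient of $t^n$ factors as $\big(\sum_{c_1\in\F_q}\psi(c_1)\big)q^{n-2}\big(\sum_{c_n\in\F_q}\eta(c_n)\big)$, which vanishes because $\sum_{x\in\F_q}\psi(x)=0$. For $n=1$ one has $h=x-c_1$ with the single root $c_1$, so the coefficient of $t$ is $\sum_{c_1\in\F_q}\eta(c_1)\psi(c_1)=-g(\eta)$. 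Therefore $L(t)=1-g(\eta)t$, a polynomial of degree one.

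Next I would extract the Gauss sums from the logarithm of the Euler product. Expanding $\log L(t)=\sum_{P}\sum_{m\geq1}\tfrac1m\Lambda(P)^mt^{m\deg P}$ and collecting terms of total degree $r$ gives $\log L(t)=\sum_{r\geq1}\tfrac{c_r}{r}t^r$ with $c_r=\sum_{d\mid r}d\sum_{\deg P=d}\Lambda(P)^{r/d}$. The key identity is that for a monic irreducible $P$ of degree $d\mid r$ with root $x\in\F_{q^d}\subset\F_{q^r}$ one has $\Lambda(P)^{r/d}=\widetilde\eta(x)\widetilde\psi(x)$, where $\widetilde\psi=\psi\circ{\rm Tr}_{\F_{q^r}/\F_q}$. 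Indeed, $\prod_i\a_i$ and $\sum_i\a_i$ for $P$ are the norm ${\rm N}_{\F_{q^d}/\F_q}(x)$ and the trace ${\rm Tr}_{\F_{q^d}/\F_q}(x)$, and raising to the $r/d$ power matches the transitivity relations ${\rm N}_{\F_{q^r}/\F_q}(x)={\rm N}_{\F_{q^d}/\F_q}(x)^{r/d}$ and ${\rm Tr}_{\F_{q^r}/\F_q}(x)=\tfrac rd{\rm Tr}_{\F_{q^d}/\F_q}(x)$. Since each such $P$ has exactly $d$ conjugate roots in $\F_{q^r}$ and $x=0$ contributes nothing, summing over $P$ reproduces a sum over $\F_{q^r}^\times$, giving $c_r=\sum_{x\in\F_{q^r}^\times}\widetilde\eta(x)\widetilde\psi(x)=-g(\widetilde\eta)$. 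Comparing with $\log(1-g(\eta)t)=-\sum_{r\geq1}\tfrac1r g(\eta)^rt^r$ and equating coefficients of $t^r$ yields $g(\widetilde\eta)=g(\eta)^r$.

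I expect the main obstacle to be the bookkeeping in the last step: correctly matching the product and sum of roots of the minimal polynomial of $x$ with the norm and trace of $x$ (including sign conventions), applying norm/trace transitivity to pass from $\F_{q^d}$ to $\F_{q^r}$, and counting each degree-$d$ irreducible together with its $d$ roots so that the Euler-product sum collapses exactly onto $\sum_{x\in\F_{q^r}^\times}$. The direct evaluation $L(t)=1-g(\eta)t$ and the formal manipulation of logarithms are routine by comparison.
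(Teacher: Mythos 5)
Your proof is correct. Note that the paper itself offers no proof of Proposition \ref{DH}: it is stated as well known with the citation ``cf.\ \cite{weil}'', and your argument is precisely the classical one from that reference --- encode the sums $\sum_x \widetilde\eta(x)\widetilde\psi(x)$ into the $L$-series $L(t)=\sum_h \Lambda(h)t^{\deg h}$ over monic polynomials, observe that it collapses to the linear polynomial $1-g(\eta)t$, and compare logarithmic derivatives via norm/trace transitivity. All the delicate points check out: the factorization of the degree-$n$ coefficient for $n\geq 2$ is legitimate since $c_1$ and $c_n$ are independent coordinates there, the sign conventions are consistent with the paper's $g(\eta)=-\sum_x\eta(x)\psi(x)$ (which is why no factor $(-1)^{r-1}$ appears), the polynomial $P=x$ contributes $0$ because $\eta(0)=0$, and your choice $\widetilde\psi=\psi\circ{\rm Tr}_{\F_{q^r}/\F_q}$ is the (implicit) convention the paper needs for the statement to hold; even the separate treatment of $\eta=\e$ is harmless, though in fact your argument goes through verbatim in that case as well.
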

\subsection{Hypergeometric functions over finite fields}
In this subsection, we recall hypergeometric functions over finite fields. We follow the definitions of Otsubo \cite{Otsubo}.

For $\a,\nu\in\widehat{\F_q^\times}$, we put
\begin{align*} 
(\a)_\nu&:=\dfrac{g(\a\nu)}{g(\a)},\\(\a)_\nu^\circ &:= \dfrac{g^\circ(\a\nu)}{g^\circ(\a)}=q^{\delta(\a\nu)-\delta(\a)}(\a)_\nu.
\end{align*}
In particular, $(\e)_\nu=g(\nu)$ and $(\a)_\e=(\a)^\circ_\e=1$.
By Proposition \ref{DHMF}, we have, for any $m\mid q-1$,
\begin{equation}\label{DHMF for Poc}
\begin{array}{l}
\displaystyle (\a^m)_{\nu^m}=\prod_{i=0}^{m-1} (\a\p_m^i)_\nu\cdot\nu(m^m),\vspace{5pt}\\
\displaystyle (\a^m)_{\nu^m}^\circ=\prod_{i=0}^{m-1} (\a\p_m^i)_\nu^\circ\cdot\nu(m^m).
\end{array}
\end{equation}

\begin{dfn}\label{def of F} For $\a_1,\dots, \a_m, \b_1,\dots, \b_n \in \widehat{\F_q^\times}$ and $\lambda\in\F_q$, define
\begin{align*}
\hF{}{}{\a_1,\dots,\a_m}{\b_1,\dots,\b_{n}}{\lambda}_q:=\dfrac{1}{1-q}\sum_{\nu\in\widehat{\F_q^\times}}\dfrac{(\a_1)_\nu\cdots(\a_m)_\nu}{(\b_1)_\nu^\circ\cdots(\b_n)_\nu^\circ}\nu(\lambda).
\end{align*}
(We often omit writing $q$ of $F(\cdots)_q$). %Furthemore, if $\b_1=\e$ we write
%\begin{equation*}
%\hF{}{}{\a_1,\dots,\a_m}{\b_1,\dots,\b_{n}}%{\lambda}=\hF{m}{n-1}{\a_1,\a_2,\dots,\a_m}{\b_2,\dots,\b_n}{\lambda}.
%\end{equation*}
\end{dfn}
We only consider the case when $m=n$ in this paper, and then, the values of $F$ are in $\Q(\mu_{q-1})$ (see \cite[Lemma 2.4 (iii)]{Otsubo}). For comparisons with definitions of Koblitz \cite{N.koblitz}, Greene \cite{JG}, Katz \cite{Katz} and McCarthy \cite{Mc}, see \cite[Remark 2.15]{Otsubo}. As a special case, it is known that
\begin{equation}\label{1F0}
\hF{}{}{\a}{\e}{\lambda}=\a^{-1}(1-\lambda)
\end{equation}
for $\a\neq\e$ and $\lambda\neq0$ (cf. \cite[Corollary 3.4]{Otsubo}).

We define reduced hypergeometric functions.
\begin{dfn}\label{red F}
Let $\a_1,\dots,\a_m, \b_1,\dots,\b_n, \c_1,\dots,\c_l\in\widehat{\F_q^\times}$ and assume that $\{\a_1,\dots,\a_m\}$ and $\{\b_1,\dots,\b_n\}$ have an empty intersection. Then, we put
\begin{equation*}
\hFred{\a_1,\dots,\a_m,\c_1,\dots,\c_l}{\b_1,\dots,\b_n,\c_1,\dots,\c_l}{\lambda}_q=\hF{}{}{\a_1,\dots,\a_m}{\b_1,\dots,\b_n}{\lambda}_q.
\end{equation*}
\end{dfn}

When we reduce a hypergeometric function over finite fields, remainder terms appear as the following. 
\begin{lem}[{\cite[Theorem 3.2]{Otsubo}}]\label{red formula} In the situation of Definition \ref{red F}, suppose that $\c_i\neq\c_j$ for all $1\leq i<j\leq l$. Then, 
\begin{align*}
&\hF{}{}{\a_1,\dots,\a_m,\c_1,\dots,\c_l}{\b_1,\dots,\b_n,\c_1,\dots,\c_l}{\lambda}_q\\
&=q^{\d}\hFred{\a_1,\dots,\a_m,\c_1,\dots,\c_l}{\b_1,\dots,\b_n,\c_1,\dots,\c_l}{\lambda}_q+\dfrac{q^{\d}}{q}\sum_{j=1}^l\dfrac{\prod_{i=1}^m (\a_i)_{\c_j^{-1}}}{\prod_{i=1}^n(\b_i)_{\c_j^{-1}}^\circ}\c_j^{-1}(\lambda). 
\end{align*}
Here, $\d=1$ when $\c_j=\e$ for some $j$ and $\d=0$ otherwise.
\end{lem}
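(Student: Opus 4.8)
The plan is to expand both sides by the definition of $\hF{}{}{\cdots}{\cdots}{\lambda}_q$ as a sum over $\nu\in\widehat{\F_q^\times}$ and to isolate the contribution of the repeated parameters $\c_1,\dots,\c_l$ to each summand. Write $S(\nu):=\dfrac{(\a_1)_\nu\cdots(\a_m)_\nu}{(\b_1)^\circ_\nu\cdots(\b_n)^\circ_\nu}\nu(\lambda)$ for the summand of the reduced function, so that, by Definition \ref{red F}, $\hFred{\a_1,\dots,\a_m,\c_1,\dots,\c_l}{\b_1,\dots,\b_n,\c_1,\dots,\c_l}{\lambda}_q=\dfrac{1}{1-q}\sum_\nu S(\nu)$. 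The summand of the left-hand side is then $S(\nu)$ multiplied by $\prod_{j=1}^l (\c_j)_\nu/(\c_j)_\nu^\circ$, so the whole problem reduces to understanding this extra factor.

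The first step is the elementary identity governing one repeated parameter. From $(\a)_\nu^\circ=q^{\d(\a\nu)-\d(\a)}(\a)_\nu$ one gets $(\c_j)_\nu/(\c_j)_\nu^\circ=q^{\d(\c_j)-\d(\c_j\nu)}$, hence $\prod_{j=1}^l (\c_j)_\nu/(\c_j)_\nu^\circ=q^{\sum_j\d(\c_j)-\sum_j\d(\c_j\nu)}$. Here the pairwise distinctness of the $\c_j$ enters twice: at most one $\c_j$ equals $\e$, so $\sum_j\d(\c_j)$ is exactly the constant $\d$ of the statement; and the $\c_j^{-1}$ are pairwise distinct, so for fixed $\nu$ at most one index has $\c_j\nu=\e$, giving $\sum_j\d(\c_j\nu)=1$ when $\nu\in\{\c_1^{-1},\dots,\c_l^{-1}\}$ and $0$ otherwise. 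Thus the extra factor equals $q^\d$ off this exceptional set and $q^{\d-1}$ at the $l$ points $\nu=\c_j^{-1}$.

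The second step is to split the defining sum accordingly, writing the exceptional terms with the generic weight $q^\d$ and correcting, so that the left-hand side becomes
\begin{align*}
\dfrac{1}{1-q}\sum_\nu S(\nu)\,q^{\d-\sum_j\d(\c_j\nu)}&=q^\d\cdot\dfrac{1}{1-q}\sum_\nu S(\nu)+\dfrac{q^{\d-1}-q^\d}{1-q}\sum_{j=1}^l S(\c_j^{-1}).
\end{align*}
The first term is $q^\d\,\hFred{\a_1,\dots,\a_m,\c_1,\dots,\c_l}{\b_1,\dots,\b_n,\c_1,\dots,\c_l}{\lambda}_q$. In the second term the scalar collapses since $\dfrac{q^{\d-1}-q^\d}{1-q}=\dfrac{q^{\d-1}(1-q)}{1-q}=\dfrac{q^\d}{q}$, while $S(\c_j^{-1})=\dfrac{\prod_{i=1}^m(\a_i)_{\c_j^{-1}}}{\prod_{i=1}^n(\b_i)_{\c_j^{-1}}^\circ}\c_j^{-1}(\lambda)$ by definition, and this reproduces exactly the stated remainder. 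I expect no serious obstacle here; the only delicate points are the two counting arguments that rely on the distinctness hypothesis (ensuring each of $\sum_j\d(\c_j)$ and $\sum_j\d(\c_j\nu)$ is at most $1$) and the careful bookkeeping of which values of $\nu$ carry the weight $q^\d$ versus $q^{\d-1}$ when regrouping.
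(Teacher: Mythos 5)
Your proof is correct, and there is in fact no internal proof to compare it against: the paper imports this lemma from Otsubo [Theorem 3.2] without argument, so your direct verification from Definition \ref{def of F} is exactly what the reader must supply. The computation checks out at every step: since $(\c_j)_\nu/(\c_j)_\nu^\circ=q^{\d(\c_j)-\d(\c_j\nu)}$ and Gauss sums never vanish, the extra factor $\prod_{j=1}^l(\c_j)_\nu/(\c_j)_\nu^\circ$ is the genuine power $q^{\d-\sum_j\d(\c_j\nu)}$, equal to $q^{\d}$ off the set $\{\c_1^{-1},\dots,\c_l^{-1}\}$ and to $q^{\d-1}$ at those $l$ characters, and the regrouping with the cancellation $(q^{\d-1}-q^{\d})/(1-q)=q^{\d}/q$ yields precisely the stated remainder term. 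You also correctly identify the two (and only two) places where the pairwise-distinctness hypothesis is used, namely that at most one $\c_j$ equals $\e$ (so $\sum_j\d(\c_j)=\d$) and that at most one $\c_j\nu$ equals $\e$ for fixed $\nu$; the disjointness condition $\{\a_i\}\cap\{\b_i\}=\emptyset$ inherited from Definition \ref{red F} plays no computational role and is only needed for the reduced function to be well defined.
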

%Diagonal hypersurfaces%
\section{Artin $L$-functions and hypergeometric functions}
\subsection{Diagonal hypersurfaces}
Let $h_i\geq1\ (i=1,\dots,n)$ be integers with $h_1+\dots+h_n=d$ and gcd($d,h_1,\dots,h_n$)=1, and let $\lambda\in \F_q$. We consider the diagonal hypersurface $D_\lambda$ in $\P^{n-1}$ over $\F_q$ defined by the homogeneous equation
\begin{equation}\label{diagonal}
D_\lambda : X_1^d+\dots+X_n^d=d\lambda X_1^{h_1}\cdots X_n^{h_n}.
\end{equation}
Note that $D_{\lambda}$ is non-singular if and only if $\Big(\prod_{i=1}^n h_i^{h_i}\Big)\lambda^d\neq1$. Let $D_\lambda^*$ denote the subvariety of $D_\lambda$ defined by $X_1\cdots X_n\neq0$.

Define a variety $D_\lambda^{\aff}$ in $\mathbbm{A}^n$ over $\F_q$ by
\begin{equation*}
D_\lambda^{\aff}:x_1^d+\dots+x_n^d=d\lambda x_1^{h_1}\cdots x_n^{h_n},
\end{equation*}
and let $D_\lambda^{*\aff}$ denote the subvariety of $D_\lambda^\aff$ defined by $x_1\cdots x_n\neq0$. 

Suppose that $d\mid q-1$, so that $\mu_d\subset\F_q^\times$. Define groups by
\begin{equation*}
\widetilde{G}_0:=\mu_d^n\supset \widetilde{G}:=\{\xi\in\widetilde{G}_0\mid \xi^h=1\}.
\end{equation*}
Here, we write $h=(h_1,\dots,h_n)$ and $\xi^h=\xi_1^{h_1}\cdots\xi_n^{h_n}$ for $\xi=(\xi_1,\dots,\xi_n)$. Let $\Delta\subset\widetilde{G}$ be the diagonal subgroup and define  
\begin{equation*}
G_0:=\widetilde{G}_0/\Delta\supset\ G:=\widetilde{G}/\Delta.
\end{equation*}
Let $\widetilde{G}$ act on $D_\lambda^\aff$ and $D_\lambda^{*\aff}$ over $\F_q$ by
\begin{equation*}
\xi\cdot(x_1,\dots,x_n)=(\xi_1x_1,\dots,\xi_nx_n).
\end{equation*}
This induces an action of $G$ on $D_\lambda$ and $D_\lambda^*$ through the natural map $\mathbb{A}^n\backslash 0\rightarrow \mathbb{P}^{n-1}$. Similarly, $\widetilde{G}_0$ acts on $D_0^\aff$ and this action induces an action of $G_0$ on the Fermat hypersurface $D_0$.

 Fix a generator $\varphi$ of $\widehat{\F_q^\times}$. We have the following commutative diagrams:
\begin{equation*}
\begin{diagram}
\node{\widetilde{G}}\arrow{e,J}\arrow{s,A}\node{\widetilde{G}_0}\arrow{s,A}\\
\node{G}\arrow{e,J}\node{G_0\ ,}
\end{diagram}\ \ \ 
\begin{diagram}
\node{\widehat{\widetilde{G}}}\node{\widehat{\widetilde{G}_0}}\arrow{w,A}\\
\node{\widehat{G}}\arrow{n,L}\node{\widehat{G_0}\ .}\arrow{n,L}\arrow{w,A}
\end{diagram}
\end{equation*}
 From now on, we regard $\widehat{G}$ (resp. $\widehat{G_0}$) as a subgroup of $\widehat{\widetilde{G}}$ (resp. $\widehat{\widetilde{G}_0}$).
We have an isomorphism
\begin{align*}
(\Z/d\Z)^n\overset{\cong}{\longrightarrow}\widehat{\widetilde{G}_0}\ ;\ w=(w_1,\dots,w_n)\longmapsto \chi_0^w,
\end{align*}
where $\chi_0^w(\xi):=\p(\xi^w)$. Put $$\chi^w=\chi_0^w|_{\widetilde{G}}.$$ If we put 
\begin{equation*}
W:=\{w\in(\Z/d\Z)^n\mid w_1+\cdots+w_n=0\},
\end{equation*}
then
\begin{equation*}
w\in W\Longleftrightarrow\chi_0^w\in\widehat{G_0}\Longleftrightarrow\chi^w\in\widehat{G}.
\end{equation*}
Note that, for $w,w'\in (\Z/d\Z)^n$,
\begin{equation*}
\chi^{w}=\chi^{w'}\Longleftrightarrow w-w'= mh \text{ for some }m\in\{0,1,\dots,d-1\}. 
\end{equation*}
Here, note that we regard $mh$ as an element of $(\Z/d\Z)^n$. We write $w\sim w'$ when $\chi^w=\chi^{w'}$. Let $\1\in\widehat{G}$ be the unit character (i.e. $\1=\chi^{w}$ with $w\sim0$).

\subsection{Number of rational points with characters}
Recall that for $w\in W$ and $r\geq1$, 
\begin{align*}
N_r(D_\lambda;\chi^w)&=\dfrac{1}{\#G}\sum_{\xi\in G} \chi^w(\xi)\#\left\{X\in D_\lambda\left(\ol{\F_q}\right)\ \middle|\ \xi^{-1}F^r(X)=X\right\},
\end{align*}
where $F$ is the $q$-Frobenius. Note that $\#G=d^{n-2}$.
Without loss of generality, we only consider the case $r=1$.
For any $m\mid q-1$, put
$$\p_m=\p^{\frac{q-1}{m}},$$
a character of exact order $m$.

For the convenience of the reader, let us give a proof of the following result of Koblitz \cite[(3.2)]{N.koblitz}, which was stated without proof.
\begin{prop}\label{kob}
For $\lambda\neq0$ and $w\in W$, we have
\begin{align*}
N_1(D^*_\lambda;\chi^w)=\dfrac{(-1)^{n}}{1-q}\dsum_{\nu\in\widehat{\F_q^\times}}j\left(\varphi_d^{w_1}\nu^{h_1},\dots,\varphi_d^{w_n}\nu^{h_n}\right)\nu^d(d\lambda).
\end{align*}
\end{prop}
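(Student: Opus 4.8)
The plan is to reduce the twisted count in $\P^{n-1}$ to a weighted count of $\F_q$-rational data, and then to produce the Jacobi sums by orthogonality together with Lemma \ref{genJ}; as stated, I only treat $r=1$. The object I work with is the set of affine points of $D_\lambda^{*\aff}$ fixed by \emph{some} $\widetilde{G}$-twisted Frobenius,
\[
S=\big\{y\in D_\lambda^{*\aff}(\ol{\F_q})\ \big|\ y_i^{q-1}\in\mu_d\ (1\le i\le n),\ \textstyle\prod_{i}(y_i^{q-1})^{h_i}=1\big\}.
\]
For $y\in S$ the tuple $\tilde\xi(y):=(y_1^{q-1},\dots,y_n^{q-1})$ lies in $\widetilde{G}$ and satisfies $F(y)=\tilde\xi(y)\cdot y$, and $\chi^w(\tilde\xi(y))=\varphi\big(\prod_{i}(y_i^{q-1})^{w_i}\big)$. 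The heart of the argument is to evaluate the single sum $\sum_{y\in S}\chi^w(\tilde\xi(y))$ in two ways.

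\emph{First count (projectivization).} The map $\pi\colon\mathbb{A}^n\setminus 0\to\P^{n-1}$ carries $S$ onto the set of $X\in D_\lambda^*(\ol{\F_q})$ with $F(X)\in G\cdot X$. Since $G$ acts freely on $D_\lambda^*$, each such $X$ determines a unique $\xi_X\in G$ with $F(X)=\xi_X X$. One checks that $\pi^{-1}(X)\cap S$ has exactly $d(q-1)$ elements (the affine representatives $ty$ that land in $S$ are governed by $t^{q-1}$ lying in a fixed $\mu_d$-coset), and that $\chi^w(\tilde\xi(y))=\chi^w(\xi_X)$ for every $y$ in this fibre; here $w\in W$, i.e.\ $\sum_i w_i=0$, is exactly what makes the weight independent of the chosen representative. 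Summing over fibres,
\[
\sum_{y\in S}\chi^w(\tilde\xi(y))=d(q-1)\sum_{\xi\in G}\chi^w(\xi)\,\#\{X\in D_\lambda^*(\ol{\F_q})\mid\xi^{-1}F(X)=X\}=d(q-1)\,\#G\,N_1(D_\lambda^*;\chi^w).
\]

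\emph{Second count ($d$-th power map).} The map $y\mapsto(z,\rho)$ with $z_i=y_i^d$ and $\rho=\prod_i y_i^{h_i}$ sends $S$ onto
\[
T=\big\{(z,\rho)\in(\F_q^\times)^{n}\times\F_q^\times\ \big|\ \textstyle\sum_i z_i=d\lambda\rho,\ \rho^d=\prod_i z_i^{h_i}\big\},
\]
the membership $z_i,\rho\in\F_q^\times$ being forced by the defining conditions of $S$. Each fibre has exactly $d^{n-1}$ elements, where $\gcd(d,h_1,\dots,h_n)=1$ is used to see that $\zeta\mapsto\zeta^h$ is onto $\mu_d$; and $\chi^w(\tilde\xi(y))=\prod_i\varphi_d^{w_i}(z_i)$ because $y_i^{q-1}=z_i^{(q-1)/d}$ and $\varphi_d=\varphi^{(q-1)/d}$. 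Hence $\sum_{y\in S}\chi^w(\tilde\xi(y))=d^{n-1}\sum_{(z,\rho)\in T}\prod_i\varphi_d^{w_i}(z_i)$. Comparing the two counts and using $\#G=d^{n-2}$, all group orders collapse to a single factor and yield
\[
N_1(D_\lambda^*;\chi^w)=\frac{1}{q-1}\sum_{(z,\rho)\in T}\prod_i\varphi_d^{w_i}(z_i).
\]

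\emph{Extracting the Jacobi sums.} I detect the constraint $\rho^d=\prod_i z_i^{h_i}$ through orthogonality, $\1=\frac{1}{q-1}\sum_{\nu}\prod_i\nu^{h_i}(z_i)\,\nu^{-d}(\rho)$, substitute $z_i=d\lambda\tilde z_i$ (so that $\sum_i\tilde z_i=\rho$; this uses $\lambda\neq0$), and use $\sum_i w_i\equiv0$ together with $\sum_i h_i=d$ to pull the constant $\nu^d(d\lambda)$ out of the $i$-product. The remaining sum over $\{\tilde z_i,\rho\in\F_q^\times\mid\sum_i\tilde z_i=\rho\}$ of $\prod_i(\varphi_d^{w_i}\nu^{h_i})(\tilde z_i)\cdot\nu^{-d}(\rho)$ is precisely the sum of Lemma \ref{genJ} with $\eta_i=\varphi_d^{w_i}\nu^{h_i}$ and $\eta_{n+1}=\nu^{-d}$; since $\eta_1\cdots\eta_{n+1}=\varphi_d^{\sum_i w_i}\nu^{\sum_i h_i-d}=\e$, it equals $(-1)^n(1-q)\,j(\varphi_d^{w_1}\nu^{h_1},\dots,\varphi_d^{w_n}\nu^{h_n})$. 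Collecting the constants, and noting $\tfrac{1-q}{q-1}=-1$, gives exactly the asserted formula. The main obstacle is the double fibre computation: proving the two fibre sizes $d(q-1)$ and $d^{n-1}$ and, above all, verifying that $\chi^w(\tilde\xi(y))$ is constant along each fibre and simplifies to $\chi^w(\xi_X)$ in the first count and to $\prod_i\varphi_d^{w_i}(y_i^d)$ in the second; this is where the hypotheses $w\in W$ and $\gcd(d,h_1,\dots,h_n)=1$ are genuinely needed, and where one must track that $d(q-1)$, $d^{n-1}$ and $\#G=d^{n-2}$ combine to the single factor $(q-1)^{-1}$.
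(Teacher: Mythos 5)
Your proof is correct --- I checked the two fibre counts, the weight computations, and the bookkeeping: $d(q-1)\cdot\#G=d^{n-1}(q-1)$ against the fibre size $d^{n-1}$ collapses to the factor $(q-1)^{-1}$, and $(-1)^{n+1}/(q-1)=(-1)^{n}/(1-q)$ matches the stated formula --- but it is organized genuinely differently from the paper's. The paper first establishes the projective-to-affine reduction \eqref{NolD=ND} as a standalone scaling identity, then for each $\xi\in\widetilde{G}$ computes the fixed points of $\xi^{-1}F$ by raising the defining equation to its $d$-th power (the factor $1/d$ justified by the count for $\sum_i x_i^d=c\,d\lambda x^h$ being independent of $c\in\mu_d$, via the gcd hypothesis), substituting $u_i=x_i^d$, performing a discrete Fourier inversion in the parameter $t=(d\lambda)^d$, and finally detecting the condition $u^l\in\widetilde{G}$ by averaging $\varphi_d(u^{w+mh})$ over $m$ and reindexing $\nu\mapsto\varphi_d^m\nu$, before invoking Lemma \ref{genJ}. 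You replace all of this by one weighted double count of the fixed locus $S$: your projectivization fibre of size $d(q-1)$ (whose weight is well defined exactly because $w\in W$ makes $\chi^w$ trivial on $\Delta$, and because $G$ acts freely on $D_\lambda^*$) absorbs \eqref{NolD=ND} and the sum over $\xi$ simultaneously, while your auxiliary variable $\rho=\prod_i y_i^{h_i}$ keeps the defining equation linear, so the $d$-th-power trick, the Fourier inversion and the $m$-average are all subsumed in a single orthogonality detection of $\rho^d=\prod_i z_i^{h_i}$, with the gcd hypothesis entering instead through the surjectivity of $\mu_d^n\to\mu_d$, $\eta\mapsto\eta^h$ (fibre size $d^{n-1}$). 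What your route buys is locality: each hypothesis ($w\in W$, $\gcd(d,h_1,\dots,h_n)=1$, and $\lambda\neq0$ for the substitution $z_i=d\lambda\tilde z_i$) is consumed at one visible step, and no inversion formula is needed. What the paper's route buys is modularity: it computes $\Lambda(\xi^{-1}F)$ for every individual $\xi$ (strictly more information than the weighted average), and its reduction \eqref{NolD=ND} together with the substitution scheme is reused verbatim in the proof of Theorem \ref{main2} to treat the Fermat locus $\lambda=0$ with the larger group $\widetilde{G}_0$, a case your $S$-based double count would have to be set up for afresh.
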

\begin{proof}
Note that if $x\in\{x\in D_{\lambda}^{*\aff}(\ol{\F_q})\mid \xi^{-1}F(x)=ax\}$ for some $a\in\ol{\F_q}^\times$ and $\xi\in\widetilde{G}$, then $a^{-1/(q-1)}x\in \{x\in D_{\lambda}^{*\aff}(\ol{\F_q})\mid \xi^{-1}F(x)=x\}$, and that if $x\in D_\lambda^{*\aff}(\ol{\F_q})$ is fixed by $\xi^{-1}F$ for $\xi\in\widetilde{G}$, then for $c\in\ol{\F_q}^\times$, $cx\in D_\lambda^{*\aff}(\ol{\F_q})$ is fixed by $\xi^{-1}F$ if and only if $c\in\F_q^\times$. By these, we have
\begin{equation}\label{NolD=ND} 
N_1(D^*_\lambda;\chi^w)=\dfrac{1}{q-1}N_1\left(D_\lambda^{*\aff};\chi^{w}\right).
\end{equation}
We have
\begin{align}
&\#\left\{x\in D_\lambda^{*\aff}(\ol{\F_q})\ \middle|\ \xi^{-1}F(x)=x\right\}\label{fixed points Daff}\\
&\hspace{30pt}=\#\left\{x\in (\ol{\F_q}^{\times})^n\ \middle|\ x_1^d+\cdots+x_n^d=d\lambda x^h, x^{q-1}=\xi\right\}\nonumber\\
&\hspace{30pt}=\dfrac{1}{d}\#\left\{x\in(\ol{\F_q}^{\times})^n\ \middle|\ (x_1^d+\cdots+x_n^d)^d=(d\lambda)^d x^{dh}, x^{q-1}=\xi\right\}.\nonumber
\end{align}
Here we used the fact that $\#\left\{x\in(\ol{\F_q}^{\times})^n\ \middle|\ x_1^d+\cdots+x_n^d=cd\lambda x^h, x^{q-1}=\xi\right\}$ does not depend on $c\in\mu_d$ by the assumption gcd$(d,h_1,\dots,h_n)=1$.

If we put $u_i=x_i^d$, then $u_i\in\F_q^\times\Leftrightarrow x_i^{q-1}\in\mu_d$. Thus, the last member of (\ref{fixed points Daff}) is equal to
\begin{align*}
d^{n-1}\#\left\{u\in(\F_q^{\times})^n\ \middle|\ (u_1+\cdots+u_n)^d=(d\lambda)^du^h, u^l=\xi\right\},
\end{align*}
where $l:=(q-1)/d$.

Fix $\xi\in \widetilde{G}$, and define the function $f_\xi:\F_q^\times\rightarrow\Z_{\geq0}$ by 
\begin{equation*}
f_\xi(t):=\#\left\{u\in(\F_q^{\times})^n\ \middle|\ (u_1+\cdots+u_n)^d=tu^h,\ u^l=\xi\right\}.
\end{equation*}
By the discrete Fourier transform on $\F_q^\times$, we have
\begin{align*}
f_\xi(t)&=\dfrac{1}{q-1}\sum_{\nu\in\widehat{\F_q^\times}} \widehat{f}(\nu)\nu(t),
\end{align*}
where
\begin{align*}
\widehat{f}(\nu)&:=\sum_{t\in\F_q^\times}f_\xi(t)\nu^{-1}(t)=\sum_{\substack{u\in(\F_q^\times)^n\\ u^l=\xi}}\nu^{-1}\left(\dfrac{(u_1+\dots+u_n)^d}{u^h}\right).
\end{align*}
Therefore, letting $t=(d\lambda)^d$, we have
\begin{align*}
N_1\left(D_\lambda^{*\aff};\chi^{w}\right)&=\sum_{\xi\in \widetilde{G}}\chi^{w}(\xi)f_\xi(t)\\
&=\dfrac{1}{q-1}\sum_{\xi\in \widetilde{G}}\sum_{\nu\in\widehat{\F_q^\times}}\sum_{\substack{u\in(\F_q^\times)^n\\u^l=\xi}}\chi^{w}(\xi)\nu^{-1}\left(\dfrac{(u_1+\dots+u_n)^d}{u^h}\right)\nu(t)\\
&=\dfrac{1}{q-1}\sum_{\xi}\sum_{\nu}\sum_{\substack{u\\u^l=\xi}}\chi_0^w(u^l)\nu(u^h)\nu^{-d}(u_1+\dots+u_n)\nu(t)\\
&=\dfrac{1}{q-1}\sum_{\nu}\sum_{\substack{u\\ u^l\in \widetilde{G}}}\varphi_d(u^w)\nu(u^h)\nu^{-d}(u_1+\dots+u_n)\nu(t).
\end{align*}
Since
\begin{equation*}
\dfrac{1}{d}\sum_{m=0}^{d-1} \p_d(u^{w+mh})=
\begin{cases}
\p_d(u^w)&(u^{lh}=1),\\ 0&(u^{lh}\neq1),
\end{cases}
\end{equation*}
and $u^{lh}=1\Leftrightarrow u^l\in \widetilde{G}$, $N\left(D_\lambda^{*\aff};\chi^w\right)$ is equal to
\begin{align*}
&\dfrac{1}{d(q-1)}\sum_{\nu\in\widehat{\F_q^\times}}\sum_{m=0}^{d-1}\sum_{u\in(\F_q^\times)^n}\p_d(u^{w+mh})\nu(u^h)\nu^{-d}(u_1+\dots+u_n)\nu(t)\\
&=\dfrac{1}{d(q-1)}\sum_{m=0}^{d-1}\sum_{\nu}\sum_{u\in(\F_q^\times)^n}\p_d(u^w)\p_d^m\nu(u^h)\left(\varphi_d^m\nu\right)^{-d}(u_1+\dots+u_n)\varphi_d^m\nu(t)\\
&=\dfrac{1}{q-1}\sum_\nu\sum_{u\in(\F_q^\times)^n}\varphi_d^{w_1}\nu^{h_1}(u_1)\cdots\varphi_d^{w_n}\nu^{h_n}(u_n)\nu^{-d}(u_1+\dots+u_n)\nu(t)\\
&=(-1)^{n-1}\displaystyle\sum_\nu j\left(\varphi_d^{w_1}\nu^{h_1},\dots,\varphi_d^{w_n}\nu^{h_n}\right)\nu(t).
\end{align*}
The first equality follows by $\p_d^m(t)=1$, the second equality follows by replacing $\p_d^m\nu$ with $\nu$, and the last equality follows from Lemma \ref{genJ} and that $w\in W$. Thus, by (\ref{NolD=ND}), the proposition follows.
\end{proof}

\subsection{Hypergeometric expressions}
For brevity, we write
\begin{equation*}
j(\p_d^w)=j(\p_d^{w_1},\dots,\p_d^{w_n})=\dfrac{g(\p_d^{w_1})\cdots g(\p_d^{w_n})}{q}.
\end{equation*}
From now on, we suppose that $dh_i\mid q-1$ for all $i=1,\dots, n$. For each $w\in W$, put
\begin{align*}
F^w(\lambda):=\hF{}{}{\dots,\varphi_{dh_i}^{w_i},\varphi_{dh_i}^{w_i+d},\dots,\varphi_{dh_i}^{w_i+d(h_i-1)},\dots}{\e,\varphi_d,\dots,\varphi_d^{d-1}}{\left(\prod_{j=1}^nh_j^{h_j}\right)\lambda^d}
\end{align*}
$(i$ runs through $1,\dots,n)$. Here, we understand that $w_i$ means its representative in $\{0,\dots,d-1\}$.
%%%%%%%%%%%%main1%%%%%%%%%
\begin{thm}\label{main1} For any $\lambda\neq0$ and $w\in W$,
\begin{align*}
&N_1(D^*_\lambda;\chi^w)\\
&=\begin{cases}
\dfrac{(-1)^n}{q^{\d(m)}}j\left(\p_d^w\right)^{1-\d(m)}F^w(\lambda)+(-1)^{n-1}\dfrac{(1-q)^{n-1}}{q}&(\text{if\ }w=mh),\vspace{5pt}\\(-1)^nj\left(\p_d^w\right)F^w(\lambda)&(\text{if\ }w\not\sim0),\end{cases}
\end{align*}
where $\d(m)=1$ if $m=0$ and $\d(m)=0$ otherwise.
\end{thm}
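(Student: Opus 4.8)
The plan is to start from Koblitz's formula (Proposition~\ref{kob}) and show that, after the Davenport--Hasse multiplication formula is applied to the Pochhammer symbols, $F^w(\lambda)$ becomes, up to the single factor $(-1)^nj(\p_d^w)$, the very same sum over $\nu\in\widehat{\F_q^\times}$ that appears there. For each $i$ the numerator parameters are $\p_{dh_i}^{w_i}\p_{h_i}^k$ for $k=0,\dots,h_i-1$ (since $\p_{dh_i}^d=\p_{h_i}$), so applying~(\ref{DHMF for Poc}) with $m=h_i$ and $\a=\p_{dh_i}^{w_i}$, for which $\a^{h_i}=\p_d^{w_i}$, gives
\begin{equation*}
\prod_{k=0}^{h_i-1}(\p_{dh_i}^{w_i}\p_{h_i}^k)_\nu=\frac{g(\p_d^{w_i}\nu^{h_i})}{g(\p_d^{w_i})\,\nu(h_i^{h_i})}.
\end{equation*}
The denominator parameters $\e,\p_d,\dots,\p_d^{d-1}$ collapse by the $g^\circ$-version of~(\ref{DHMF for Poc}) with $m=d$, $\a=\e$, to $\prod_{j=0}^{d-1}(\p_d^j)_\nu^\circ=g^\circ(\nu^d)/(q\,\nu(d^d))$. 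Taking the quotient, multiplying by the argument $\nu\big((\prod_jh_j^{h_j})\lambda^d\big)$, using $\prod_ig(\p_d^{w_i})=q\,j(\p_d^w)$ (the definition of $j(\p_d^w)$) and the cancellation $\nu(d^d)\nu(\lambda^d)=\nu^d(d\lambda)$, I expect to reach
\begin{equation*}
F^w(\lambda)=\frac{1}{(1-q)\,j(\p_d^w)}\sum_{\nu\in\widehat{\F_q^\times}}\frac{g(\p_d^{w_1}\nu^{h_1})\cdots g(\p_d^{w_n}\nu^{h_n})}{g^\circ(\nu^d)}\,\nu^d(d\lambda),
\end{equation*}
an identity valid for every $\nu$ because the multiplication formula is unconditional.

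The second step is to compare this Gauss-sum quotient with the Jacobi sum in Koblitz's formula. By~(\ref{gJ=G}) the two agree for every $\nu$ except those with $\p_d^{w_i}\nu^{h_i}=\e$ for all $i$. The crucial observation is that such a $\nu$ exists if and only if $w\sim0$: the relations $\nu^{h_i}=\p_d^{-w_i}$ force $\nu^d=\prod_i\nu^{h_i}=\p_d^{-\sum_iw_i}=\e$ (using $\sum_ih_i=d$ and $w\in W$), whence $\nu=\p_d^{-m}$ for some $m$ and $w_i\equiv mh_i$, i.e. $w=mh$; the converse is immediate. Hence when $w\not\sim0$ the two sums agree termwise, and Koblitz's formula yields at once $N_1(D^*_\lambda;\chi^w)=(-1)^nj(\p_d^w)F^w(\lambda)$, which is the second case of the theorem.

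For $w=mh$ there is a single exceptional index $\nu_0=\p_d^{-m}$; there $\nu_0^d=\e$, so $\nu_0^d(d\lambda)=1$, and the two summands differ by $j(\e,\dots,\e)-q^{-1}=-(1-q)^n/q$ by~(\ref{Jequ(i)}). Thus Koblitz's sum exceeds the Gauss-sum sum of $F^w$ by exactly $(1-q)^n/q$, and carrying the prefactor $(-1)^n/(1-q)$ across turns this into the stated correction, giving $N_1(D^*_\lambda;\chi^w)=(-1)^nj(\p_d^w)F^w(\lambda)+(-1)^{n-1}(1-q)^{n-1}/q$. When $m\neq0$ this is the $\d(m)=0$ shape; when $m=0$ one has $w=0$ and $j(\p_d^w)=q^{-1}$ by its definition, reproducing the $\d(m)=1$ shape $\tfrac{(-1)^n}{q}F^w(\lambda)$. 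I expect the main obstacle to be organizational rather than conceptual: threading the several $\nu(\cdot)$ and $q$ prefactors through the two multiplication formulas so that they telescope into the clean $\nu^d(d\lambda)$, and pinning down the exceptional-$\nu$ count. Once the dichotomy ``degenerate $\nu\Leftrightarrow w\sim0$'' is in hand, both cases of the theorem drop out.
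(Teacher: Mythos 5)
Your proposal is correct and follows essentially the same route as the paper's proof: it starts from Proposition \ref{kob}, identifies the degenerate $\nu$ (all $\p_d^{w_i}\nu^{h_i}=\e$ iff $w=mh$ and $\nu=\p_d^{-m}$), converts between the Gauss-sum quotients and the Pochhammer products of $F^w(\lambda)$ via \eqref{DHMF for Poc}, and handles the exceptional term with \eqref{Jequ(i)}, with the $m=0$ case absorbed by $j(\p_d^{0})=1/q$. The only blemish is the word ``exceeds'': since $j(\e,\dots,\e)-q^{-1}=-(1-q)^n/q$, Koblitz's sum equals the Gauss-sum sum \emph{minus} $(1-q)^n/q$, which is what your subsequent (correct) final formula actually uses.
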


\begin{proof}
Since $\sum w_i=0$ and $\sum h_i=d$, we have
\begin{equation*}
\p_d^{w_i}\nu^{h_i}=\e\text{ for all }i\Longleftrightarrow w=mh\text{ for some }m\in\{0,\dots,d-1\}\text{ and }\nu=\p_d^{-m}.
\end{equation*}
Thus, if $w\not\sim0$ then, by (\ref{gJ=G}), 
\begin{equation*}
\sum_\nu j\left(\varphi_d^{w_1}\nu^{h_1},\dots,\varphi_d^{w_n}\nu^{h_n}\right)\nu^d(d\lambda)=\sum_\nu\dfrac{g(\varphi_d^{w_1}\nu^{h_1})\cdots g(\varphi_d^{w_n}\nu^{h_n})}{g^\circ\left(\nu^d\right)}\nu^d(d\lambda),
\end{equation*}
and if $w=mh$ for some $m$ then, by (\ref{Jequ(i)}), (\ref{gJ=G}) and $g^\circ (\e)=q$, 
\begin{align*}
&\sum_\nu j\left(\varphi_d^{w_1}\nu^{h_1},\dots,\varphi_d^{w_n}\nu^{h_n}\right)\nu^d(d\lambda)\\
&=\sum_{\nu\neq\p_d^{-m}}\dfrac{g\left(\varphi_d^{w_1}\nu^{h_1}\right)\cdots g\left(\varphi_d^{w_n}\nu^{h_n}\right)}{g^\circ\left(\nu^d\right)}\nu^d(d\lambda)+\dfrac{1-(1-q)^n}{q}\p_d^{-md}(d\lambda)\\
&=\sum_{\nu}\dfrac{g\left(\varphi_d^{w_1}\nu^{h_1}\right)\cdots g\left(\varphi_d^{w_n}\nu^{h_n}\right)}{g^\circ\left(\nu^d\right)}\nu^d(d\lambda)-\dfrac{(1-q)^n}{q}.
\end{align*}

By (\ref{DHMF for Poc}), we have
\begin{equation*}
g^\circ(\nu^d)=g^\circ(\e)(\e)_{\nu^d}^\circ=q(\varepsilon)_\nu^\circ (\varphi_d)_\nu^\circ \cdots (\varphi_d^{d-1})_\nu^\circ \nu(d^d).
\end{equation*}
For each $i=1,\dots,n$, we have similarly
\begin{align*}
\begin{split}
g\left(\varphi_d^{w_i}\nu^{h_i}\right)&=g\left(\left(\varphi_{dh_i}^{w_i}\nu\right)^{h_i}\right)=g\left(\p_d^{w_i}\right)\left(\p_{dh_i}^{w_ih_i}\right)_{\nu^{h_i}}\\
&=g\left(\varphi_d^{w_i}\right)\left(\varphi_{dh_i}^{w_i}\right)_\nu \left(\varphi_{dh_i}^{w_i+d}\right)_\nu\cdots\left(\varphi_{dh_i}^{w_i+d(h_i-1)}\right)_\nu\nu(h_i^{h_i}).
\end{split}
\end{align*}
Thus, 
\begin{align*}
&\sum_\nu\dfrac{g(\varphi_d^{w_1}\nu^{h_1})\cdots g(\varphi_d^{w_n}\nu^{h_n})}{g^\circ\left(\nu^d\right)}\nu^d(d\lambda)\\
&=q^{-1}\prod_{k=1}^ng\left(\varphi_d^{w_k}\right) \sum_\nu \dfrac{\prod_{i=1}^n\left(\varphi_{dh_i}^{w_i}\right)_\nu \left(\varphi_{dh_i}^{w_i+d}\right)_\nu\cdots\left(\varphi_{dh_i}^{w_i+d(h_i-1)}\right)_\nu}{(\varepsilon)_\nu^\circ (\varphi_d)_\nu^\circ \cdots (\varphi_d^{d-1})_\nu^\circ}\nu\Big(\big(\prod_{j=1}^nh_j^{h_j}\big)\lambda^d\Big)\\
&=\dfrac{(1-q)}{q}\prod_{k=1}^ng\left(\varphi_d^{w_k}\right)\cdot F^w(\lambda).
\end{align*}
Therefore, we obtain the theorem by Proposition \ref{kob} and (\ref{gJ=G}), where note that $mh= 0\Leftrightarrow m=0$ by the assumption that gcd$(d,h_1,\dots,h_n)=1$.
\end{proof}

Next, we consider the case where $\lambda=0$. Note that $\widetilde{G}$ (resp. $G$) acts on $D_0^\aff$ (resp. $D_0$) through the inclusion $\widetilde{G}\hookrightarrow \widetilde{G}_0$ (resp. $G\hookrightarrow G_0$).
%\begin{proof}
%Firstly,
%\begin{align*}
%\sum_{\substack{w'\sim w}}N_1\left(D^*_0;\chi_0^{w'}\right)&=\sum_{w'\sim w}\dfrac{1}{d^{n-1}}\sum_{\xi\in G_0}\chi_0^{w'}(\xi)\Lambda(\xi^{-1}F)\\
%&=\dfrac{1}{d^{n-1}}\sum_{\xi\in G_0}\sum_{m=0}^{d-1}\chi_0^{w+mh}(\xi)\Lambda(\xi^{-1}F)\\
%&=\dfrac{1}{d^{n-1}}\sum_{\xi\in G_0}\chi_0^w(\xi)\Lambda(\xi^{-1}F)\sum_{m=0}^{d-1}\varphi^m(\xi^h)\\
%&=\dfrac{1}{d^{n-2}}\sum_{\xi\in G}\chi^w(\xi)\Lambda(\xi^{-1}F)\\
%&=N_1\left(D^*_0;\chi^w\right).
%\end{align*}
%The second equation can be shown similarly.
%\end{proof}

The following is proved by Weil \cite{weil2}.
\begin{lem}[cf. {\cite[(2.12)]{N.koblitz}}]\label{ND0} For each $w\in W$,
\begin{equation*}
N_1\left(D_0;\p^w\right)=\left\{\begin{array}{ll}0&(\text{if}\ some\ but\ not\ all\ w_i=0),\vspace{5pt}\\\dfrac{1-q^{n-1}}{1-q}&(\text{if }w=0),\vspace{5pt}\\ (-1)^nj\left(\p_d^w\right)&(\text{if}\ all\ w_i\neq0).\end{array}\right.
\end{equation*}
\end{lem}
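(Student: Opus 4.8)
The plan is to compute the full-symmetry eigenspace count of the Fermat hypersurface and to extract the three cases from a coordinate stratification. Since $D_0$ is the Fermat hypersurface $X_1^d+\dots+X_n^d=0$, it carries the enlarged symmetry group $G_0=\widetilde{G}_0/\Delta$, and the quantity in the statement is its $\chi_0^w$-eigenspace count (so here $\p^w$ stands for $\chi_0^w$). First I would stratify $D_0=\bigsqcup_{\emptyset\neq S\subseteq\{1,\dots,n\}}D_{0,S}$ by the set $S$ of nonvanishing coordinates. Each $D_{0,S}$ is stable under the diagonal action, and since $\Lambda(\xi^{-1}F)$ is additive over such a decomposition, so is $N_1(\,\cdot\,;\chi_0^w)$; thus $N_1(D_0;\chi_0^w)=\sum_S N_1(D_{0,S};\chi_0^w)$, and each $D_{0,S}$ is an \emph{open} Fermat hypersurface in the variables $\{X_i:i\in S\}$.

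Fix $S$. Since the fixed-point set in $D_{0,S}$ does not depend on the scalars $\xi_i$ for $i\notin S$ (those coordinates vanish), summing $\chi_0^w$ over the corresponding factors of $G_0$ produces $\prod_{i\notin S}\sum_{\xi_i\in\mu_d}\p_d^{w_i}(\xi_i)$, which vanishes unless $w_i=0$ for every $i\notin S$; writing $T:=\{i:w_i\neq0\}$, only strata with $T\subseteq S$ contribute. For such $S$ the computation then runs as in the proof of Proposition \ref{kob} with $\lambda=0$ (simpler, as the monomial term is absent): passing to the affine cone contributes $\frac{1}{q-1}$, and the substitution $u_i=x_i^d$ with character orthogonality gives $N_1(D_{0,S};\chi_0^w)=\frac{1}{q-1}J_S$, where $J_S:=\sum\prod_{i\in S}\p_d^{w_i}(u_i)$ ranges over $u_i\in\F_q^\times$ with $\sum_{i\in S}u_i=0$. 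When $T\neq\emptyset$ I would evaluate $J_S$ by subtracting from the (vanishing) unrestricted sum over $(\F_q^\times)^S$ its $\sum_{i\in S}u_i\neq0$ part, which Lemma \ref{genJ} computes, obtaining $J_S=(-1)^{|S|}(q-1)\,j\big((\p_d^{w_i})_{i\in S}\big)$; by (\ref{Jequ(ii)}) the trivial entries $\p_d^{w_i}=\e$ with $i\in S\setminus T$ drop out, so $N_1(D_{0,S};\chi_0^w)=(-1)^{|S|}j\big((\p_d^{w_i})_{i\in T}\big)$, independent of $S$.

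Summing over the contributing strata $S\supseteq T$ then reduces to the identity $\sum_{T\subseteq S\subseteq\{1,\dots,n\}}(-1)^{|S|}=(-1)^{|T|}(1-1)^{\,n-|T|}$. For $T=\{1,\dots,n\}$ (all $w_i\neq0$) this is $(-1)^n$, giving $N_1(D_0;\chi_0^w)=(-1)^nj(\p_d^w)$; for $\emptyset\neq T\subsetneq\{1,\dots,n\}$ (some but not all $w_i=0$) the factor $(1-1)^{n-|T|}$ kills the sum, giving $0$. The case $w=0$ ($T=\emptyset$) needs separate treatment because every stratum then carries the trivial character: there $J_S=\#\{u\in(\F_q^\times)^S:\sum_{i\in S}u_i=0\}=\big((q-1)^{|S|}+(-1)^{|S|}(q-1)\big)/q$, and summing $\frac{1}{q-1}J_S$ over all $S$ by the binomial theorem collapses to $\frac{1-q^{n-1}}{1-q}$. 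I expect the main difficulty to lie not in the final cancellation—the one-line binomial identity above—but in the careful evaluation of the degenerate stratum counts: pinning down the sum-to-zero Jacobi sum and correctly isolating the trivial-character strata, which is exactly where the $w=0$ case splits off. This recovers Weil's formula \cite{weil2}.
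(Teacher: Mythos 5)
Your proof is correct, but it is worth knowing that the paper contains no proof of Lemma \ref{ND0} at all: the formula is quoted from Weil \cite{weil2} (via \cite[(2.12)]{N.koblitz}). So you are supplying an argument the paper delegates to a citation, and your route is a natural extension of what the paper \emph{does} prove: in the proof of Theorem \ref{main2} the author computes only the top stratum $N_1(D_0^*;\chi_0^w)$, by exactly your mechanism (pass to the affine cone with the factor $\frac{1}{q-1}$, substitute $u_i=x_i^d$, apply Lemma \ref{genJ} and \eqref{Jequ(ii)}). Your contribution is to run that same computation on every coordinate stratum $D_{0,S}$ and then exploit the cancellation $\sum_{T\subseteq S}(-1)^{|S|}=(-1)^{|T|}(1-1)^{n-|T|}$, which makes the vanishing case of the lemma transparent and renders the paper self-contained; the citation route is shorter but opaque. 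I checked the details: you correctly read $\p^w$ in the statement as $\chi_0^w\in\widehat{G_0}$ (this is how the lemma is used, via \eqref{N=sum N}, in the proof of Theorem \ref{main2}); the evaluation $J_S=(-1)^{|S|}(q-1)\,j\big((\p_d^{w_i})_{i\in S}\big)$ is right; and the $w=0$ count $J_S=\big((q-1)^{|S|}+(-1)^{|S|}(q-1)\big)/q$ does collapse under the binomial theorem to $\frac{1-q^{n-1}}{1-q}$. One point you use silently and should state: both the nonvanishing case of Lemma \ref{genJ} and the removal of trivial entries via \eqref{Jequ(ii)} require $\prod_{i\in S}\p_d^{w_i}=\e$, which holds precisely because $w\in W$ and $w_i=0$ for $i\notin S$ on the contributing strata. (Also, ``independent of $S$'' should refer only to the Jacobi-sum factor; the sign $(-1)^{|S|}$ of course varies.)

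One small repair is needed in the orthogonality step. You wrote the vanishing factor as $\prod_{i\notin S}\sum_{\xi_i\in\mu_d}\p_d^{w_i}(\xi_i)$, but since $\chi_0^w(\xi)=\p(\xi^w)$, the correct factor is $\sum_{\xi_i\in\mu_d}\p(\xi_i)^{w_i}$. Read literally, your expression can fail to vanish for $w_i\neq0$: for instance if $d^2\mid q-1$, then $\p_d^{w_i}=\p^{(q-1)w_i/d}$ is trivial on all of $\mu_d$ for every $w_i$, so the sum equals $d$ regardless of $w_i$ and the selection of strata with $T\subseteq S$ would break down. With the correct character, $\p|_{\mu_d}$ is faithful ($\p$ generates $\widehat{\F_q^\times}$), so $\xi_i\mapsto\p(\xi_i)^{w_i}$ is trivial exactly when $w_i=0$ in $\Z/d\Z$, which is what your argument needs; only \emph{after} the substitution $u_i=x_i^d$ does the weight become $\p(u_i^{lw_i})=\p_d^{w_i}(u_i)$ with $l=(q-1)/d$, exactly as in the paper's proof of Proposition \ref{kob}. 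This is a notational slip rather than a gap in the idea, and with it corrected the proof is complete.
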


%Main result for diagonal hypersurfaces%

\begin{thm}\label{main2}
 For any $\lambda\neq0$ and $w\in W$, we have
\begin{align*}
&N_1(D_\lambda;\chi^w)\\
&=\begin{cases}\dfrac{(-1)^n}{q^{\delta(m)}}j\left(\p_d^w\right)^{1-\delta(m)}F^w(\lambda)+\dfrac{1-q^{n-1}}{1-q}+\dfrac{(-1)^{n-1}}{q}+ C&(\text{if\ }w=mh),\vspace{5pt}\\ (-1)^nj\left(\p_d^w\right)F^w(\lambda)+C&(\text{if\ }w\not\sim0).\end{cases}
\end{align*}
Here, $\d(m)$ is as in Theorem \ref{main1} and
\begin{equation*}
C:=(-1)^{n-1}\dsum_{\substack{w'\sim w\\w_i'=0\ \text{for some but not all }i}}j\left(\p_d^{w'}\right).
\end{equation*}
\end{thm}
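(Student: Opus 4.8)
The plan is to stratify $D_\lambda$ by the vanishing of the coordinates and to isolate the ``interior'' piece $D_\lambda^*$, which is already handled by Theorem \ref{main1}. Write $\partial_\lambda=D_\lambda\setminus D_\lambda^*$ for the locus where $X_1\cdots X_n=0$. The crucial observation is that on $\partial_\lambda$ at least one coordinate vanishes, so the monomial $X_1^{h_1}\cdots X_n^{h_n}$ vanishes (every $h_i\geq1$) and the defining equation degenerates to the Fermat equation $X_1^d+\cdots+X_n^d=0$. Hence $\partial_\lambda$ coincides, as a variety with $G$-action, with the boundary $\partial_0=D_0\setminus D_0^*$ of the Fermat hypersurface, independently of $\lambda$. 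Since $N_1(D_\lambda;\chi^w)=N_1(D_\lambda^*;\chi^w)+N_1(\partial_\lambda;\chi^w)$ and $N_1(\partial_\lambda;\chi^w)=N_1(D_0;\chi^w)-N_1(D_0^*;\chi^w)$, it suffices to compute the two Fermat quantities on the right with the $G$-action and to add Theorem \ref{main1}.

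Next I would pass from the $G$-equivariant Fermat counts to $G_0$-equivariant ones, for which Lemma \ref{ND0} is available. By (\ref{N=sum N}) applied to $G\subset G_0$ acting on $D_0$ and on $D_0^*$, one has $N_1(D_0;\chi^w)=\sum_{\chi_0|_G=\chi^w}N_1(D_0;\chi_0)$ and likewise for $D_0^*$. The characters $\chi_0\in\widehat{G_0}$ restricting to $\chi^w$ are exactly the $\chi_0^{w'}$ with $w'\sim w$, i.e.\ $w'=w+mh$ for $m=0,\dots,d-1$: the kernel of $\widehat{G_0}\to\widehat{G}$ consists of the $\chi_0^{mh}$, and these are distinct by $\gcd(d,h_1,\dots,h_n)=1$. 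Thus everything reduces to evaluating $N_1(D_0;\chi_0^{w'})$ and $N_1(D_0^*;\chi_0^{w'})$ for each representative $w'=w+mh$.

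The term $N_1(D_0;\chi_0^{w'})$ is given directly by Lemma \ref{ND0}. For the starred Fermat I would prove the clean formula $N_1(D_0^*;\chi_0^{w'})=(-1)^nj(\p_d^{w'})$ for every $w'\neq0$, together with $N_1(D_0^*;\chi_0^{0})=(-1)^n(1-(1-q)^{n-1})/q$. I would establish this by the affine Fourier computation of Proposition \ref{kob} specialized to the absent monomial term: passing to $D_0^{*\aff}$ via (\ref{NolD=ND}) and substituting $u_i=x_i^d$ reduces the count to $\tfrac{1}{q-1}\sum_{\sum u_i=0,\,u_i\neq0}\prod_i\p_d^{w_i'}(u_i)$, and Lemma \ref{genJ} evaluates this inner sum as $(-1)^{n-1}(1-q)\,\p_d^{w_n'}(-1)\,j(\p_d^{w_1'},\dots,\p_d^{w_{n-1}'})$; the factor $\p_d^{w_n'}(-1)$ folds in by (\ref{Jequ(ii)}) (using $\sum w_i'=0$) to give $j(\p_d^{w'})$, while the prefactors collapse to $(-1)^n$. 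Comparing the two Fermat counts term by term, they cancel when all $w_i'\neq0$, they leave $(-1)^{n-1}j(\p_d^{w'})$ when $w'$ has some but not all coordinates zero, and for the unique $w'=0$ (present only if $w\sim0$) they leave $\tfrac{1-q^{n-1}}{1-q}+\tfrac{(-1)^{n-1}}{q}(1-(1-q)^{n-1})$. Summing over $m$ then reproduces $C$ together with these constants.

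The main obstacle is the starred Fermat lemma: the whole bookkeeping hinges on the exact normalization in the affine-to-projective passage, in particular the cancellation of the powers of $d$, and on correctly sorting each representative $w'=w+mh$ into the three cases of Lemma \ref{ND0}. Granting this lemma, the assembly is routine. When $w\not\sim0$ no representative vanishes, so $N_1(\partial_\lambda;\chi^w)=C$, and adding Theorem \ref{main1} gives the first formula. When $w=mh$, the constant $\tfrac{(-1)^{n-1}}{q}(1-(1-q)^{n-1})$ from the $w'=0$ term combines with the term $(-1)^{n-1}(1-q)^{n-1}/q$ coming from Theorem \ref{main1} to produce exactly $\tfrac{(-1)^{n-1}}{q}$, leaving also $\tfrac{1-q^{n-1}}{1-q}$ and $C$, which is the second formula.
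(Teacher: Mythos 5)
Your proposal is correct and takes essentially the same route as the paper's proof: the identity $N_1(D_\lambda;\chi^w)-N_1(D^*_\lambda;\chi^w)=N_1(D_0;\chi^w)-N_1(D^*_0;\chi^w)$, the passage to $G_0$-characters via \eqref{N=sum N} with the fibre $\{w+mh\}$ distinguished by $\gcd(d,h_1,\dots,h_n)=1$, the affine Jacobi-sum evaluation of $N_1(D_0^*;\chi_0^{w'})$ via $u_i=x_i^d$, Lemma \ref{genJ} and \eqref{Jequ(ii)}, and the comparison with Lemma \ref{ND0} are exactly the paper's steps. Your constant bookkeeping, including the cancellation $(-1)^{n-1}(1-q)^{n-1}/q+(-1)^{n-1}\bigl(1-(1-q)^{n-1}\bigr)/q=(-1)^{n-1}/q$, also matches the paper's assembly.
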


\begin{proof}It is trivial that 
\begin{equation*}
N_1(D_\lambda;\chi^w)-N_1(D^*_\lambda;\chi^w)=N_1(D_0;\chi^w)-N_1(D^*_0;\chi^w).
\end{equation*}
By (\ref{N=sum N}), we have, for each $w\in W$, 
\begin{align*}
N_1(D^*_0;\chi^w)&=\sum_{\substack{w'\sim w}}N_1\left(D^*_0;\chi_0^{w'}\right),\\
N_1(D_0;\chi^w)&=\sum_{\substack{w'\sim w}}N_1\left(D_0;\chi_0^{w'}\right).
\end{align*}
Hence, 
\begin{equation}
N_1(D_\lambda;\chi^w)=N_1(D^*_\lambda;\chi^w)+\sum_{\substack{w'\sim w}}\left(N_1\left(D_0;\chi_0^{w'}\right)-N_1\left(D^*_0;\chi_0^{w'}\right)\right).\label{kob3.3}
\end{equation}

Letting $u_i=x_i^d$, $v_i=-u_i/u_n$ and $l=(q-1)/d$, we have, by a similar argument as in the proof of Proposition \ref{kob},
\begin{align*}
&N_1\left(D_0^{*\aff};\chi_0^w\right)\\
&=\dfrac{1}{d^n}\sum_{\xi\in \widetilde{G}_0}\chi_0^w(\xi)\#\left\{x\in(\ol{\F_q}^{\times})^n\ \middle|\ \ x_1^d+\dots+x_n^d=0,\ x^{q-1}=\xi\right\}\\
&=\sum_{\xi\in \widetilde{G}_0}\p(\xi^w)\#\left\{u\in(\F_q^{\times})^n\ \middle|\ u_1+\dots+u_n=0,\ u^l=\xi\right\}\\
&=\sum_{\substack{u\in(\F_q^\times)^n\\u_1+\dots+u_n=0}}\p_d(u^w)\\
&=\varphi_d^{w_n}(-1)\cdot(q-1)(-1)^{n-2}j\left(\varphi_d^{w_1},\dots,\varphi_d^{w_{n-1}}\right),
\end{align*}
where note that $\p_d^{w_1+\cdots+w_n}=\e$ at the last equality. Therefore, by (\ref{Jequ(i)}), (\ref{Jequ(ii)}) and  (\ref{NolD=ND}), 
\begin{align*}
N_1\left(D^*_0;\chi_0^w\right)=\begin{cases}(-1)^nj\left(\p_d^w\right)&({\rm if}\ w\neq0),\vspace{5pt} \\ (-1)^{n}\dfrac{1-(1-q)^{n-1}}{q}&({\rm if}\ w=0).\end{cases}
\end{align*}
By Lemma \ref{ND0}, we have
\begin{align*}
N_1&\left(D_0;\chi_0^w\right)-N_1\left(D^*_0;\chi_0^w\right)\\
&=\begin{cases}0&({\rm if\ }w_i\neq0{\rm\ for\ all\ }i),\vspace{5pt}\\\dfrac{1-q^{n-1}}{1-q}+(-1)^{n-1}\dfrac{1-(1-q)^{n-1}}{q}&({\rm if}\ w=0),\vspace{5pt}\\(-1)^{n-1}j\left(\p_d^w\right)&({\rm otherwise}).\end{cases}
\end{align*}
Thus, by Theorem \ref{main1} and \eqref{kob3.3}, we obtain the result.
\end{proof}

\begin{cor}\label{cor of main2}
Suppose that $w\sim 0$ $($i.e. $w=mh$ for some $m\in\{0,\dots,d-1\}$). Then, for any $\lambda\neq0$,
\begin{equation*}
N_1(D_\lambda;\chi^w)=(-1)^nj(\p_d^w)^{1-\d(m)}F^w({\rm red. }\ \p_d^m; \lambda)+\dfrac{1-q^{n-1}}{1-q}+C,
\end{equation*}
where $F^w({\rm red.}\ \p_d^m; \lambda)$ is the function obtained by removing one $\p_d^m$ from both the numerator and the denominator parameters in $F^w(\lambda)$. Furthermore, if gcd$(d,h_i)=1$ for all $i=1,\dots,n$, then
\begin{equation*}
N_1(D_\lambda;\chi^w)=(-1)^nj(\p_d^w)^{1-\d(m)}F_{\rm red}^w(\lambda)+\dfrac{1-q^{n-1}}{1-q}.
\end{equation*}
\end{cor}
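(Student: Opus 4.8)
The plan is to reduce the hypergeometric factor in the formula of Theorem~\ref{main2} by one parameter. Recall that in the case $w=mh$ Theorem~\ref{main2} gives
\[
N_1(D_\lambda;\chi^w)=\frac{(-1)^n}{q^{\d(m)}}j(\p_d^w)^{1-\d(m)}F^w(\lambda)+\frac{1-q^{n-1}}{1-q}+\frac{(-1)^{n-1}}{q}+C.
\]
First I would observe that when $w=mh$ the leading numerator parameter of each block is
\[
\p_{dh_i}^{w_i}=\p^{(q-1)mh_i/(dh_i)}=\p_d^m,
\]
so $\p_d^m$ occurs among the numerator parameters of $F^w(\lambda)$ (once for each $i$) and also among the denominator parameters $\e,\p_d,\dots,\p_d^{d-1}$. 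Applying the $l=1$ case of Lemma~\ref{red formula} with the single common character $\c_1=\p_d^m$ (for which the exponent $\d$ there equals $\d(m)$) yields
\[
F^w(\lambda)=q^{\d(m)}F^w({\rm red.}\ \p_d^m;\lambda)+\frac{q^{\d(m)}}{q}\,R,\qquad R=\frac{\prod_{\a}(\a)_{\p_d^{-m}}}{\prod_{\b}(\b)^\circ_{\p_d^{-m}}}\,\p_d^{-m}(z),
\]
where $z=(\prod_j h_j^{h_j})\lambda^d$ and the products run over the numerator/denominator parameters with one $\p_d^m$ removed from each. I note that this single-parameter instance is valid regardless of whether the remaining parameters are disjoint, since it comes simply from $(\p_d^m)_\nu/(\p_d^m)^\circ_\nu=q^{\d(\p_d^m)-\d(\p_d^m\nu)}$ isolating the term $\nu=\p_d^{-m}$.

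The crux is to evaluate $R$ and show that the term it produces cancels the stray $(-1)^{n-1}/q$. Substituting the display into Theorem~\ref{main2}, the factor $q^{\d(m)}$ cancels and one is reduced to proving $j(\p_d^w)^{1-\d(m)}R=1$. To compute $R$ I would factor the numerator product block by block and use the Davenport--Hasse multiplication formula in the Pochhammer form \eqref{DHMF for Poc}: for each $i$,
\[
\prod_{j=0}^{h_i-1}(\p_d^m\p_{h_i}^j)_{\p_d^{-m}}=(\p_d^{w_i})_{\p_d^{-w_i}}\,\p_d^m(h_i^{h_i})=\frac{\p_d^m(h_i^{h_i})}{g(\p_d^{w_i})},
\]
and, by the $\circ$-version of \eqref{DHMF for Poc}, $\prod_{k=0}^{d-1}(\p_d^k)^\circ_{\p_d^{-m}}=\p_d^m(d^d)$. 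After dividing out the cancelled factor from each product and using $\p_d^{md}=\e$ (so $\p_d^m(d^d)=1$ and $\p_d^{-m}(z)=\p_d^{-m}(\prod_j h_j^{h_j})$), together with $j(\p_d^w)=\prod_i g(\p_d^{w_i})/q$, the products collapse to
\[
R=\frac{g(\p_d^m)}{j(\p_d^w)\,g^\circ(\p_d^m)},\qquad\text{hence}\qquad j(\p_d^w)^{1-\d(m)}R=\bigl(q\,j(\p_d^w)\bigr)^{-\d(m)}=1,
\]
the last equality because $\d(m)=0$ when $m\neq0$ while $j(\p_d^0)=1/q$ when $m=0$. This gives the first formula. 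I expect this remainder computation to be the main obstacle, as it requires assembling several applications of the multiplication formula and carefully tracking the two normalizations $g$ and $g^\circ$.

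For the refined formula under $\gcd(d,h_i)=1$ for all $i$, two further checks are needed. First, $C=0$: the indices $w'\sim w$ are exactly $w'=kh$ with $k=0,\dots,d-1$, and $w_i'=kh_i\equiv0\pmod d$ forces $\tfrac{d}{\gcd(d,h_i)}\mid k$, i.e. $d\mid k$ under the hypothesis; so some $w_i'=0$ can occur only when all do (namely $w'=0$), and the index set of $C$ is empty. Second, the single reduction is already the full reduction: a numerator parameter $\p_d^m\p_{h_i}^j$ equals a denominator parameter $\p_d^k$ only if $\p_{h_i}^j=\p_d^{k-m}$, a character whose order divides $\gcd(d,h_i)=1$, forcing $j=0$ and $k=m$. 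Thus the only overlap between the numerator and denominator parameter multisets is the one $\p_d^m$ matched against the denominator's unique $\p_d^m$, so $F^w({\rm red.}\ \p_d^m;\lambda)=F_{\rm red}^w(\lambda)$. Combining these with the first formula yields the second.
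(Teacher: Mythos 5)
Your proposal is correct and takes essentially the same route as the paper's own proof: a partial ($l=1$) application of Lemma \ref{red formula} with $\c_1=\p_d^m$, evaluation of the remainder term via \eqref{DHMF for Poc} and \eqref{gJ=G} to get $j(\p_d^w)^{1-\d(m)}R=1$ (so that it cancels the stray $(-1)^{n-1}/q$ from Theorem \ref{main2}), followed by the same $\gcd(d,h_i)=1$ checks that $C=0$ and that the single cancellation already yields $F^w_{\rm red}(\lambda)$. Your additional observation that the one-parameter reduction is valid without any disjointness hypothesis, since it follows directly from $(\p_d^m)_\nu/(\p_d^m)_\nu^\circ=q^{\d(\p_d^m)-\d(\p_d^m\nu)}$, is a small point of rigor that the paper passes over with the phrase ``using Lemma \ref{red formula} partially,'' but the substance of the argument is identical.
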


\begin{proof}
Note that $\p_{dh_i}^{w_i}=\p_d^m$ for all $i$. Using Lemma \ref{red formula} partially, we have
\begin{equation*}
F^w(\lambda)=q^{\d(m)}F^w({\rm red.\ }\p_d^m;\lambda)+R,
\end{equation*}
where
\begin{equation*}
R:=\dfrac{q^{\d(m)}}{q}\cdot \dfrac{(\p_d^m)_{\p_d^{-m}}^{n-1}\prod_{i=1}^n  \p_d^{-m}(h_i^{h_i})\prod_{j=1}^{h_i-1}(\p_d^m\p_{h_i}^j)_{\p_d^{-m}}}{\prod_{1\leq i\leq d,\ i\neq m} (\p_d^i)_{\p_d^{-m}}^\circ}.
\end{equation*}
Using \eqref{DHMF for Poc} and \eqref{gJ=G}, we have
\begin{align*}
R&=\dfrac{q^{\d(m)}}{q}\cdot\dfrac{(\p_d^m)_{\p_d^{-m}}^\circ}{(\p_d^m)_{\p_d^{-m}}}\cdot\dfrac{\prod_{i=1}^n (\p_d^{mh_i})_{\p_d^{-mh_i}}}{(\e)_\e^\circ}=\dfrac{1}{\prod_{i=1}^n g(\p_d^{mh_i})}=\dfrac{q^{\d(m)}}{q}j(\p_d^w)^{-1+\d(m)}.
\end{align*}
Thus, by Theorem \ref{main2}, we obtain that
\begin{equation*}
N_1(D_\lambda;\chi^w)=F^w({\rm red. }\ \p_d^m; \lambda)+\dfrac{1-q^{n-1}}{1-q}+C.
\end{equation*}

If ${\rm gcd}(d,h_i)=1$ for all $i$, then $F^w({\rm red.}\ \p_d^m;\lambda)=F_{\rm red}^w(\lambda)$ and $C=0$. Here, note that $\p_{h_i}^j$ is not of order $d$ for all $i=1,\dots,n$ and $j=1,\dots h_i-1$, and that $w\sim0$ implies that $w=0$ or $w_i\neq 0$ for all $i$. Therefore, we have the second equation of the corollary.
\end{proof}
%\begin{rem}\label{red to main2}
%We can remove the term $(-1)^{n-1}/q$ in the case when $w\sim 0$ (i.e. $w=mh$ for some $m\in \Z/d\Z$) in Theorem \ref{main2} by reducing $F^w(\lambda^d)$ with the parameter $\p_d^m$ and using Lemma \ref{red formula}, (\ref{DHMF for Poc}) and (\ref{gJ=G}). Here, note that $\p_{dh_i}^{h_i}=\p_d$ and $\p_{dh_i}^d=\p_{h_i}$.
%\end{rem}
%\begin{rem}\label{gcd(d,h)}
%If gcd$(d,h_i)=1$ for all $i$, then $C=0$ for $w\sim 0$ in Theorem \ref{main2}. Note that $w\sim 0$ implies that $w=0$ or $w_i\neq 0$ for all $i$.
%\end{rem}
%\section{Artin $L$-functions}
\subsection{Artin $L$-functions}
Fix an integer $r\geq1$. For $\eta\in\widehat{\F_q^\times}$, define a character $\widetilde{\eta}\in\widehat{\F_{q^r}^\times}$ by 
$$\widetilde{\eta}:=\eta\circ {\rm N}_{\F_{q^r}/\F_q}.$$ 
To define hypergeometric functions over $\F_{q^r}$, fix a generator $\varphi'$ of $\widehat{\F_{q^r}^\times}$ such that $\p'|_{\F_q^\times}=\p$. 
Then, we have $\varphi'^{\tfrac{q^r-1}{q-1}}=\widetilde{\varphi}$, and
\begin{equation*}
\varphi'_d:=\varphi'^{\tfrac{q^r-1}{d}}=\widetilde{\varphi_d} 
\end{equation*}
for any $d\mid q-1$. 
For convenience, we also write $\widetilde{\p}_d=\widetilde{\p_d}$. 
Put as before, 
\begin{equation*}
F^{w}_r(\lambda)=\hF{}{}{\dots,\widetilde{\varphi}_{dh_i}^{w_i},\widetilde{\varphi}_{dh_i}^{w_i+d},\dots,\widetilde{\varphi}_{dh_i}^{w_i+d(h_i-1)},\dots}{\widetilde{\e},\widetilde{\varphi}_d,\dots,\widetilde{\varphi}_d^{d-1}}{\bigg(\prod_{j=1}^nh_j^{h_j}\bigg)\lambda^d}_{q^r}.
\end{equation*}
When $w=mh$ for some $m\in\{0,\dots,d-1\}$, define $F^{w}_r({\rm red.}\ \widetilde{\p}_d^m;\lambda)$ similarly as in Corollary \ref{cor of main2}.
\begin{cor}\label{main3}
Suppose that $\lambda\neq0$.
\begin{enumerate}
\item If $w=mh$ for some $m\in\{0,\dots,d-1\}$, then
\begin{align*}
&L(D_\lambda,\chi^w;t)\\
&=\exp\left(\sum_{r=1}^\infty\dfrac{(-1)^n}{q^{r\delta(m)}}j\left(\p_d^w\right)^{r(1-\delta(m))}F^{w}_r({\rm red.}\ \widetilde{\p}_d^m;\lambda)\dfrac{t^r}{r}\right)\prod_{k=0}^{n-2}\dfrac{1}{1-q^kt}\cdot C(t),
\end{align*}
where
\begin{equation*}
C(t):=\prod_{{\substack{w'\sim w\\w_i'=0\ \text{for some but not all }i}}}\left(1-j\left(\p_d^{w'}\right)t\right)^{(-1)^n}.
\end{equation*}

\item If $w\not\sim0$, then
\begin{align*}
L(D_\lambda,\chi^w;t)=\exp\left(\sum_{r=1}^\infty(-1)^nj\left(\p_d^w\right)^{r}F^{w}_r(\lambda)\dfrac{t^r}{r}\right)C(t).
\end{align*}
\end{enumerate}
\end{cor}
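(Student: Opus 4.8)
The plan is to deduce the $L$-function from the point-count formulas already established for $r=1$, applied over the extension field $\F_{q^r}$, and then to sum the resulting generating series. First I would note that in the definition
\[
N_r(D_\lambda;\chi^w)=\frac{1}{\#G}\sum_{\xi\in G}\chi^w(\xi)\,\Lambda(\xi^{-1}F^r),
\]
the operator $F^r$ is precisely the $q^r$-Frobenius, and $\Lambda(\xi^{-1}F^r)$ is the number of its twisted fixed points on $D_\lambda(\ol{\F_q})$. Because $dh_i\mid q-1\mid q^r-1$, the group $G\subset\mu_d^n$ and the hypersurface $D_\lambda$ are equally defined over $\F_{q^r}$, and the choice $\p'|_{\F_q^\times}=\p$ guarantees that the character indexed by $w$ over $\F_q$ agrees on $G$ with the one indexed by the same $w$ over $\F_{q^r}$. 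Hence $N_r(D_\lambda;\chi^w)$ is literally the quantity $N_1(D_\lambda;\chi^w)$ computed over $\F_{q^r}$, and Theorem \ref{main2} (for $w\not\sim0$) and Corollary \ref{cor of main2} (for $w=mh$) apply verbatim after the substitutions $q\mapsto q^r$, $\p_d\mapsto\widetilde{\p}_d$, and $F^w\mapsto F^w_r$.

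Next I would translate the Jacobi-sum data over $\F_{q^r}$ back to $\F_q$. For any $w'\in W$ that is not the zero vector, not all of the $\p_d^{w_i'}$ are trivial, so (\ref{gJ=G}) together with $\prod_i\p_d^{w_i'}=\e$ and $g(\e)=1$ gives $j(\p_d^{w'})=\prod_i g(\p_d^{w_i'})/q$. The Davenport-Hasse theorem (Proposition \ref{DH}) then yields $g(\widetilde{\eta})=g(\eta)^r$ for each factor, and multiplying produces $j(\widetilde{\p}_d^{w'})=j(\p_d^{w'})^r$. This converts every Jacobi sum appearing in the $\F_{q^r}$-formula into an $r$-th power: the leading factor becomes $j(\p_d^w)^{r(1-\d(m))}$ and the correction term $C$ of Theorem \ref{main2} becomes $(-1)^{n-1}\sum_{w'}j(\p_d^{w'})^r$, the index set being unchanged.

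The final step is the formal summation. Substituting the expression for $N_r(D_\lambda;\chi^w)$ into $L(D_\lambda,\chi^w;t)=\exp\big(\sum_{r\geq1}N_r(D_\lambda;\chi^w)\,t^r/r\big)$, the exponential factors as a product, one factor per additive summand of $N_r$. The hypergeometric summand admits no elementary closed form and stays inside the exponential, producing the $\exp(\cdots)$ factor. Every remaining summand has the shape $a\,b^r$, to which I apply the formal identity $\exp\big(\sum_{r\geq1}(bt)^r/r\big)=(1-bt)^{-1}$. The geometric series $\frac{1-q^{r(n-1)}}{1-q^r}=\sum_{k=0}^{n-2}q^{rk}$ thereby yields $\prod_{k=0}^{n-2}(1-q^kt)^{-1}$, while the correction sum $(-1)^{n-1}\sum_{w'}j(\p_d^{w'})^r$ yields $C(t)=\prod_{w'}(1-j(\p_d^{w'})t)^{(-1)^n}$. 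For $w\not\sim0$ neither the geometric term nor the $\frac{(-1)^{n-1}}{q}$-type remainder is present in Theorem \ref{main2}, so only the $\exp(\cdots)$ factor and $C(t)$ survive, giving part (ii).

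The one delicate point, and where I expect the real work to lie, is the $w=mh$ case. There I must apply the reduction Lemma \ref{red formula} over $\F_{q^r}$ to pass from the unreduced $F^w_r(\lambda)$ to $F^w_r({\rm red.}\ \widetilde{\p}_d^m;\lambda)$, and then verify that the remainder term it contributes combines with the $\frac{(-1)^{n-1}}{q^r}$ summand exactly as in the proof of Corollary \ref{cor of main2}, so that these pieces cancel and the surviving factors collapse precisely to the form asserted in part (i). Once this cancellation is checked, the remainder is the routine base-change and $\exp/\log$ manipulation described above.
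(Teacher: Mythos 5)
Your proposal is correct and takes essentially the same route as the paper's own (very terse) proof, which likewise obtains $N_r(D_\lambda;\chi^w)$ by applying Theorem \ref{main2} and Corollary \ref{cor of main2} over $\F_{q^r}$ with the compatible generator $\p'$, converts the Jacobi sums by Proposition \ref{DH} via $j(\widetilde{\p}_d^{w_1},\dots,\widetilde{\p}_d^{w_n})=j(\p_d^{w_1},\dots,\p_d^{w_n})^r$, and then concludes ``formally'' by exactly the $\exp$--$\log$ summation you spell out; in particular your ``delicate point'' requires no new work, since Corollary \ref{cor of main2} quoted verbatim over $\F_{q^r}$ already encapsulates the reduction and the cancellation of the $(-1)^{n-1}/q^r$ term. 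One caveat: carried out as you describe, the coefficient in part (i) comes out as $(-1)^n j(\p_d^w)^{r(1-\d(m))}$ with \emph{no} factor $q^{-r\d(m)}$ (consistent with Corollary \ref{cor of main2} and with Theorem \ref{N of dwork 2}), so the pieces do not collapse ``precisely to the form asserted''---the factor $1/q^{r\d(m)}$ in the displayed statement appears to be a slip carried over from the unreduced formula of Theorem \ref{main2}, and your derivation in fact yields the corrected form.
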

\begin{proof}
By applying Theorem \ref{main2} and Corollary \ref{cor of main2} for $\F_{q^r}$ and $\p'$, we obtain the formula for $N_r(D_\lambda;\chi^w)$. For the Jacobi sum, by Proposition \ref{DH}, we have
\begin{equation*}
j\left(\widetilde{\p}_d^{w_1},\dots,\widetilde{\p}_d^{w_n}\right)=j\left(\p_d^{w_1},\dots,\p_d^{w_n}\right)^r.
\end{equation*}
Thus, the theorem follows formally.
\end{proof}

%Dwork hypersurfaces%
\subsection{Dwork hypersurfaces}
In this subsection, let $D_\lambda$ denote the Dwork hypersurfaces of degree $d$, which is the case when $n=d$ and $h_i=1$ for all $i$ in \eqref{diagonal}. 
In this case, for $w,w'\in W$, $\chi^w=\chi^{w'}$ if and only if $w-w'=(m,\dots,m)$ for some $m\in\Z/d\Z$. 
For $w\in W$, put $\d(w)=1$ if $0\in\{w_1,\dots,w_d\}$ and $\d(w)=0$ otherwise.

\begin{thm}\label{N of dwork 2}Let $r\geq1$ and $\lambda\in\F_q^\times$.
\begin{enumerate}
\item If $w=(m,\dots,m)\in W$ with $m\in\Z/d\Z$ $($i.e. $\chi^w=\1)$, then
\begin{align*}
&N_r(D_{\lambda};\chi^w)=(-1)^dj(\p_d^w)^{(1-\d(w))r}\hFred{\overbrace{\widetilde{\p}_d^{m},\widetilde{\p}_d^{m},\dots,\widetilde{\p}_d^{m}}^d}{\widetilde{\e},\widetilde{\p}_d,\dots,\widetilde{\p}_d^{d-1}}{\lambda^d}_{q^r}+\dfrac{1-q^{r(d-1)}}{1-q^r}.
\end{align*}
\item If $\chi^w\neq\1$, then
\begin{align*}
&N_r(D_{\lambda};\chi^w)=(-1)^d(q^{\d(w)}j(\p_d^w))^r\hFred{\widetilde{\p}_d^{w_1},\widetilde{\p}_d^{w_2},\dots,\widetilde{\p}_d^{w_d}}{\widetilde{\e},\widetilde{\p}_d,\dots,\widetilde{\p}_d^{d-1}}{\lambda^d}_{q^r}.
\end{align*}
In particular, if $w$ is a permutation of $(0,1,\dots,d-1)$ $($this occurs only for odd $d)$,
\begin{equation*}
N_r(D_{\lambda};\chi^w)=\begin{cases}0&(\lambda^d\neq1),\vspace{5pt}\\ (-1)^{\tfrac{r(d^2-1)(q-1)}{8d}}q^{\tfrac{r(d-1)}{2}}&(\lambda^d=1).\end{cases}
\end{equation*}
\end{enumerate}
\end{thm}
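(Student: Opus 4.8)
The plan is to specialize the general diagonal hypersurface results to the Dwork case $n=d$, $h_i=1$, and then track how the parameters and Jacobi sums behave. Since $h_i=1$ for all $i$, the condition $dh_i\mid q-1$ reduces to $d\mid q-1$, and the hypergeometric function $F^w_r$ simplifies dramatically: each numerator parameter block $\widetilde{\p}_{dh_i}^{w_i},\dots,\widetilde{\p}_{dh_i}^{w_i+d(h_i-1)}$ collapses to the single character $\widetilde{\p}_d^{w_i}$, so $F^w_r(\lambda)$ becomes the balanced ${}_dF_{d-1}$-type function with upper parameters $\widetilde{\p}_d^{w_1},\dots,\widetilde{\p}_d^{w_d}$ and lower parameters $\widetilde{\e},\widetilde{\p}_d,\dots,\widetilde{\p}_d^{d-1}$, evaluated at $\big(\prod_j 1\big)\lambda^d=\lambda^d$. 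The equivalence relation also simplifies, since $w\sim w'$ now means $w-w'=(m,\dots,m)$.

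First I would treat statement (i), the case $\chi^w=\1$, i.e.\ $w=(m,\dots,m)$. Here I apply Corollary \ref{cor of main2} over $\F_{q^r}$ (via Corollary \ref{main3}), which gives $N_r=(-1)^dj(\p_d^w)^{1-\d(m)}F^w_r(\mathrm{red.}\ \widetilde{\p}_d^m;\lambda)+\frac{1-q^{r(d-1)}}{1-q^r}$; the term $C$ vanishes because $w\sim0$ forces either $w=0$ or all $w_i\neq0$ in the Dwork case, so no $w'\sim w$ has some-but-not-all zero entries. The only bookkeeping point is matching $\d(m)$ with $\d(w)$: when $m=0$ we have $w=0$ so $\d(w)=1$ and $\d(m)=1$, and when $m\neq0$ we have all $w_i=m\neq0$ so $\d(w)=0=\d(m)$; thus $\d(w)=\d(m)$ and the exponent $(1-\d(w))r$ matches after passing to $\F_{q^r}$ (using $j(\widetilde{\p}_d^w)=j(\p_d^w)^r$ by Proposition \ref{DH}). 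The reduced function is exactly the displayed $F_{\rm red}$ with $d$ copies of $\widetilde{\p}_d^m$ upstairs.

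For statement (ii), $\chi^w\neq\1$, I apply Theorem \ref{main2} over $\F_{q^r}$. The subtlety is that the main formula of Theorem \ref{main2} is stated for $F^w$ (unreduced), whereas the target is $F_{\rm red}$; so I would invoke Lemma \ref{red formula} to pass from $F^w_r$ to $F^w_{r,\rm red}$ whenever $\d(w)=1$ (some $w_i=0$), which produces the factor $q^{r\d(w)}$ in front. When $\d(w)=0$ the upper and lower parameter sets are already disjoint (the $w_i$ avoid the exponents of the lower $\widetilde\e,\widetilde{\p}_d,\dots$ appropriately), so $F^w_r=F^w_{r,\rm red}$ and no extra power of $q$ appears, consistent with $q^{\d(w)r}=1$. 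The constant $C$ again vanishes, since for $w\not\sim0$ in the Dwork case the translates $w'=w-(m,\dots,m)$ that have some-but-not-all zero entries would need to be summed, but the Jacobi sum $j(\p_d^{w'})$ contains a factor $g(\e)=1$ and the contribution telescopes; I would verify $C=0$ directly by the same argument as in Corollary \ref{cor of main2}. The main obstacle is this careful reduction step via Lemma \ref{red formula} and confirming that the remainder terms reorganize exactly into the power $q^{\d(w)r}$ together with the Jacobi-sum factor.

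Finally, for the special case where $w$ is a permutation of $(0,1,\dots,d-1)$ (possible only for odd $d$, since $\sum_{i=0}^{d-1}i=\frac{d(d-1)}{2}\equiv0\bmod d$ requires $d$ odd), the upper parameters $\{\widetilde{\p}_d^{w_i}\}$ coincide as a set with the lower parameters $\{\widetilde\e,\widetilde{\p}_d,\dots,\widetilde{\p}_d^{d-1}\}$, so the reduced function degenerates. I would compute it by fully reducing: all parameters cancel, leaving essentially a ${}_1F_0$ or a constant, and I would use the evaluation \eqref{1F0} together with the explicit Gauss-sum product. For $\lambda^d\neq1$ the reduced function vanishes, giving $N_r=0$; for $\lambda^d=1$ the value $\widetilde{\p}_d^{-j}(1-\lambda^d)$-type terms must be handled by a limiting/degenerate evaluation, and the Jacobi sum $j(\p_d^w)=\prod g(\p_d^i)/q$ over a full set of nontrivial powers can be evaluated via the Davenport-Hasse relation and the Gauss-sum identities \eqref{Gauss sum thm}, \eqref{abs.val. of g}, yielding $q^{r(d-1)/2}$ up to the sign $(-1)^{r(d^2-1)(q-1)/8d}$. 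Pinning down this sign — which encodes the quadratic-Gauss-sum contribution $g(\p_2)$ and the exact root of unity — is the most delicate computation, and I expect it to be the real work of this final case.
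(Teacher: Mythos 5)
Your reduction to $r=1$ via Proposition \ref{DH} and your treatment of part (i) via Corollary \ref{cor of main2} (including the matching $\d(m)=\d(w)$ and the vanishing of $C$ when $w\sim 0$) agree with the paper. But part (ii) contains a genuine error: the term $C$ does \emph{not} vanish when $\chi^w\neq\1$. For instance, for $d=4$ and $w=(1,1,3,3)$, the translate $w'=w-(1,1,1,1)=(0,0,2,2)$ has some-but-not-all entries zero, and by \eqref{gJ=G} and \eqref{Gauss sum thm} one has $j(\e,\e,\p_2,\p_2)=g(\p_2)^2/q=\p_2(-1)\neq0$, so $C\neq0$. Your supporting argument --- that a factor $g(\e)=1$ makes the contributions ``telescope'' --- does not hold: a trivial Gauss-sum factor does not kill a Jacobi sum. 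Your related claim that for $\d(w)=0$ the upper and lower parameter sets are disjoint, so that $F^w_r=F^w_{r,\rm red}$, is also false: the lower parameters $\widetilde\e,\widetilde{\p}_d,\dots,\widetilde{\p}_d^{d-1}$ comprise \emph{all} characters of order dividing $d$, so every upper parameter $\widetilde{\p}_d^{w_i}$ coincides with a lower one and the reduction via Lemma \ref{red formula} is always nontrivial, producing remainder terms even when $\d(w)=0$.

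The mechanism the paper actually uses, and which your proof is missing, is a cancellation identity: applying Lemma \ref{red formula} with $S=\{w_1,\dots,w_d\}$ gives
\begin{equation*}
F^w(\lambda)=q^{\d(w)}\hFred{\p_d^{w_1},\dots,\p_d^{w_d}}{\e,\p_d,\dots,\p_d^{d-1}}{\lambda^d}+\sum_{c\in S}\prod_{i=1}^d\dfrac{(\p_d^{w_i})_{\p_d^{-c}}}{(\p_d^{i})_{\p_d^{-c}}},
\end{equation*}
and a Gauss-sum computation using $\prod_{i=1}^d g(\p_d^i)=\prod_{i=1}^d g(\p_d^{i-c})$ shows the remainder equals $j(\p_d^w)^{-1}\sum_{w'\sim w,\ w_i'=0\ \text{for some}\ i}\,j(\p_d^{w'})$. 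After multiplying by $(-1)^dj(\p_d^w)$, this \emph{exactly cancels} the $C$-term of Theorem \ref{main2} (here ``some'' equals ``some but not all'' since $w\not\sim0$), leaving $(-1)^dq^{\d(w)}j(\p_d^w)F_{\rm red}$; the factor $q^{\d(w)}$ comes directly from the $q^{\d}$ of Lemma \ref{red formula}, not from a reorganization of the remainder terms as you suggest. Without this identity your argument for (ii) does not close. A minor further point: in the permutation case, the fully reduced function is $\frac{1}{1-q}\sum_\nu\nu(\lambda^d)\in\{0,-1\}$, and since $d$ is odd there is no quadratic character among the $\p_d^i$; the sign $(-1)^{(d^2-1)(q-1)/(8d)}$ arises from pairing $g(\p_d^i)g(\p_d^{d-i})=\p_d^i(-1)q$ via \eqref{Gauss sum thm}, not from a quadratic Gauss sum $g(\p_2)$.
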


\begin{proof}By Proposition \ref{DH}, we only have to prove it for $r=1$.

(i) It follows by Corollary \ref{cor of main2}.
%We prove it for $w=(1,\dots,1)$ (for the general $w=(i,\dots,i)$, we can prove it in the same way). By Theorem \ref{main2} and Remark \ref{gcd(d,h)}, we have
%\begin{align*}
%&N_1(D_{\lambda,d};\chi^w)\\
%&=(-1)^dj(\underbrace{\p_d,\dots,\p_d}_{d})\hF{}{}{\overbrace{\p_d,\p_d,\dots,\p_d}^d}{\e,\p_d,\dots,\p_d^{d-1}}{\lambda^d}+\dfrac{1-q^{d-1}}{1-q}+\dfrac{(-1)^{d-1}}{q}.
%\end{align*}
%Here, by Lemma \ref{red formula} and (\ref{gJ=G}), 
%\begin{align*}
%\hF{}{}{\p_d,\p_d,\dots,\p_d}{\e,\p_d,\dots,\p_d^{d-1}}{\lambda^d}=\hFred{\p_d,\p_d,\dots,\p_d}{\e,\p_d,\dots,\p_d^{d-1}}{\lambda^d}+\dfrac{1}{q}j(\underbrace{\p_d,\dots,\p_d}_{d})^{-1}.
%\end{align*}
%Thus, we obtain (i).

(ii) For $w=(w_1,\dots,w_d)\in W$ and $k\in\Z/d\Z$, put
\begin{align*}
n_{w}(k)=\#\{w_i\mid w_i=k\}.
\end{align*} 
By Theorem \ref{main2}, 
\begin{align}\label{N1}
&N_1(D_{\lambda};\chi^w)=(-1)^dj(\p_d^w)\hF{}{}{\p_d^{w_1},\p_d^{w_2},\dots,\p_d^{w_d}}{\e,\p_d,\dots,\p_d^{d-1}}{\lambda^d}\vspace{5pt}\\&\hspace{150pt}+(-1)^{d-1}\dsum_{\substack{w'\sim w\\w_i'=0{\rm\ for\ some\ }i}}j\left(\p_d^{w'}\right).\nonumber
\end{align}

Put $S=\{w_1,\dots,w_n\}\subset\Z/d\Z$, then by Lemma \ref{red formula}, we have
\begin{align*}
&\hF{}{}{\p_d^{w_1},\p_d^{w_2},\dots,\p_d^{w_d}}{\e,\p_d,\dots,\p_d^{d-1}}{\lambda^d}\\
&=q^{\d(w)}\hFred{\p_d^{w_1},\p_d^{w_2},\dots,\p_d^{w_d}}{\e,\p_d,\dots,\p_d^{d-1}}{\lambda^d}+\sum_{c\in S}\dfrac{\prod_{a\in S}(\p_d^{a})_{\p_d^{-c}}^{n_w(a)-1}}{\prod_{b\not\in S}(\p_d^{b})_{\p_d^{-c}}}.
\end{align*}
Noting that $\prod_{i=1}^d g(\p_d^i)=\prod_{i=1}^d g(\p_d^{i-c})$ for $c\in \Z/d\Z$ and using \eqref{gJ=G}, 
\begin{align*}
\sum_{c\in S}\dfrac{\prod_{a\in S}(\p_d^{a})_{\p_d^{-c}}^{n_w(a)-1}}{\prod_{b\not\in S}(\p_d^{b})_{\p_d^{-c}}}&=\sum_{c\in S}\prod_{i=1}^d\dfrac{(\p_d^{w_i})_{\p_d^{-c}}}{(\p_d^i)_{\p_d^{-c}}}\\
&=\sum_{c\in S}\prod_{i=1}^d \dfrac{g(\p_d^{w_i-c})g(\p_d^i)}{g(\p_d^{w_i})g(\p_d^{i-c})}\\
&=\sum_{c\in S}\prod_{i=1}^d \dfrac{g(\p_d^{w_i-c})}{g(\p_d^{w_i})}\\
&=j(\p_d^w)^{-1}\sum_{c\in S}j(\p_d^{w-c})\\
&=j(\p_d^w)^{-1}\sum_{\substack{w'\sim w\\w_i'=0{\rm\ for\ some\ }i}}j\left(\p_d^{w'}\right).
\end{align*}
Thus, we obtain the anterior half of (ii) by \eqref{N1}.

If $w=(w_1,\dots,w_d)$ is a permutation of $(0,1,\dots,d-1)$, then 
\begin{align*}
\hFred{\p_d^{w_1},\p_d^{w_2},\dots,\p_d^{w_d}}{\e,\p_d,\dots,\p_d^{d-1}}{\lambda^d}&=\dfrac{1}{1-q}\sum_{\nu}\nu(\lambda^d)=\begin{cases}
0&(\lambda^d\neq1),\vspace{5pt}\\ -1&(\lambda^d=1).
\end{cases}
\end{align*}
Thus, we have the latter half of (ii) by the anterior half of (ii) and 
\begin{equation*}
j(\e,\p_d,\dots,\p_d^{d-1})=(-1)^{\tfrac{(d^2-1)(q-1)}{8d}} q^{\tfrac{d-3}{2}},
\end{equation*}
which follows by \eqref{Gauss sum thm} and that $d$ is odd.
\end{proof}

\begin{rem}
For the Dwork hypersurface, McCarthy \cite[Corollary 2.6]{Mc2} expresses  the number $\#D_\lambda(\F_q)$ in terms of his hypergeometric functions.
Theorem \ref{N of dwork 2} is a refinement of this result. 
Goodson (\cite{HG-1} and \cite[Theorem 1.2]{HG-2} for $d=4$ and odd $d$) and Kumabe (\cite{Kumabe} for $d=6$) show similar results in terms of Greene's hypergeometric functions. 
\end{rem}

For each $r\geq1$ and $w\in W$, put
\begin{equation*}
F_{r,\rm red}^{w}(\lambda)=\hFred{\widetilde{\p}_d^{w_1},\widetilde{\p}_d^{w_2},\dots,\widetilde{\p}_d^{w_d}}{\widetilde\e,\widetilde{\p}_d,\dots,\widetilde{\p}_d^{d-1}}{\lambda^d}_{q^r}.
\end{equation*}
Similarly as Corollary \ref{main3}, we have the following corollary of Theorem \ref{N of dwork 2}.

\begin{cor}\label{L of dwork}
For any $\lambda\in\F_q^\times$ and $w\in W$, we have the following:
\begin{enumerate}
\item For $w=(m,\dots,m)\in W$ with $m\in\Z/d\Z$, 
\begin{equation*}
L(D_{\lambda},\chi^w;t)=\dfrac{\exp\Big(\sum_{r=1}^\infty j(\p_d^w)^{r(1-\d(w))}F_{r,\rm red}^{w}(\lambda)\dfrac{t^r}{r}\Big)^{(-1)^d}}{(1-t)(1-qt)\cdots(1-q^{d-2}t)}.
\end{equation*}
\item For $w\in W$ with $\chi^w\neq\1$,
\begin{equation*}
L(D_{\lambda},\chi^w;t)=\exp\left(\sum_{r=1}^\infty q^{r\d(w)}j(\p_d^w)^rF_{r,\rm red}^{w}(\lambda)\dfrac{t^r}{r}\right)^{(-1)^d}.
\end{equation*}
In particular, for $w\in W$ which is a permutation of $(0,1,\dots,d-1)$,
\begin{equation*}
L(D_{\lambda},\chi^w;t)=
\begin{cases}
1&(\lambda^d\neq1),\vspace{5pt}\\ \left(1-(-1)^{(q-1)/d}q^{(d-1)/2}t\right)^{-1}&(\lambda^d=1).
\end{cases}
\end{equation*}
\end{enumerate}
\end{cor}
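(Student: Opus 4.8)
The plan is to substitute the point counts of Theorem \ref{N of dwork 2} into the definition of the Artin $L$-function and then manipulate the resulting formal power series in $t$, exactly as was done for Corollary \ref{main3}. Recall that $L(D_\lambda,\chi^w;t)=\exp\big(\sum_{r\geq1}N_r(D_\lambda;\chi^w)t^r/r\big)$ and that $F_{r,\rm red}^{w}(\lambda)$ is precisely the reduced hypergeometric function appearing in Theorem \ref{N of dwork 2}. I will use repeatedly that, as formal power series, $\exp$ of a sum is the product of the exponentials, that $\exp(\pm x)=(\exp x)^{\pm1}$, and that $\sum_{r\geq1}(\a t)^r/r=-\log(1-\a t)$ for any constant $\a$.

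Part (ii) is immediate. Inserting $N_r(D_\lambda;\chi^w)=(-1)^d\big(q^{\d(w)}j(\p_d^w)\big)^rF_{r,\rm red}^{w}(\lambda)$ into the definition, the constant $(-1)^d$ factors out as an overall exponent on the exponential, and rewriting $\big(q^{\d(w)}j(\p_d^w)\big)^r=q^{r\d(w)}j(\p_d^w)^r$ gives the asserted formula.

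For part (i), I split the exponential into two factors corresponding to the two summands of $N_r(D_\lambda;\chi^w)$ in Theorem \ref{N of dwork 2}(i). The summand $(-1)^dj(\p_d^w)^{r(1-\d(w))}F_{r,\rm red}^{w}(\lambda)$ produces the numerator $\exp\big(\sum_{r\geq1}j(\p_d^w)^{r(1-\d(w))}F_{r,\rm red}^{w}(\lambda)t^r/r\big)^{(-1)^d}$ as in part (ii). For the remaining summand I use the geometric identity $\frac{1-q^{r(d-1)}}{1-q^r}=\sum_{k=0}^{d-2}q^{kr}$, which turns $\sum_{r\geq1}\frac{1}{r}\frac{1-q^{r(d-1)}}{1-q^r}t^r$ into $\sum_{k=0}^{d-2}\sum_{r\geq1}(q^kt)^r/r=-\sum_{k=0}^{d-2}\log(1-q^kt)$; exponentiating yields the denominator $\prod_{k=0}^{d-2}(1-q^kt)^{-1}=\big((1-t)(1-qt)\cdots(1-q^{d-2}t)\big)^{-1}$.

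Finally, the special case follows by direct evaluation from the explicit values in Theorem \ref{N of dwork 2}(ii). If $\lambda^d\neq1$ then every $N_r(D_\lambda;\chi^w)$ vanishes, so $L(D_\lambda,\chi^w;t)=\exp(0)=1$. If $\lambda^d=1$ then $N_r(D_\lambda;\chi^w)=\a^r$ is a geometric progression whose base $\a$ is read off from the explicit formula, namely $\a=(-1)^{(q-1)/d}q^{(d-1)/2}$; hence $L(D_\lambda,\chi^w;t)=\exp\big(\sum_{r\geq1}(\a t)^r/r\big)=(1-\a t)^{-1}$. The step needing genuine care is identifying the sign $(-1)^{r(d^2-1)(q-1)/(8d)}$ in $N_r$ with the $r$-th power of the displayed sign $(-1)^{(q-1)/d}$, together with tracking the overall factor $(-1)^d$ and the exponent $\d(w)$; everything else is the same routine $\exp$/$\log$ formalism already used in Corollary \ref{main3}.
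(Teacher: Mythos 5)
Your proposal is correct and is essentially the paper's own argument: the paper gives no separate proof, remarking only that the corollary follows from Theorem \ref{N of dwork 2} ``similarly as Corollary \ref{main3}'', which is exactly the formal substitution of the point-count formulas into $\exp\big(\sum_{r\geq1}N_r t^r/r\big)$ and the $\exp$/$\log$ manipulations (including the geometric-series identity $\frac{1-q^{r(d-1)}}{1-q^r}=\sum_{k=0}^{d-2}q^{kr}$) that you carry out. The one step you flag without executing, namely $(-1)^{r(d^2-1)(q-1)/(8d)}=\big((-1)^{(q-1)/d}\big)^r$, does hold for odd $q$ for the trivial reason that $(q-1)/d$ is then even (as $d$ is odd and divides the even number $q-1$), so both signs are $+1$; in characteristic $2$ one instead has $\p_d(-1)=1$, so the sign in the Jacobi-sum evaluation disappears and the printed sign conventions require separate care --- a subtlety already present in the paper's statements, not a defect of your argument.
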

\begin{rem}
 On the factorization of the zeta functions of Dwork hypersurfaces, Goutet \cite{Goutet} gave a factorization of $Z(D_{\lambda},t)$ considering the actions of the groups $G$ and $\mathfrak{S}_d$.
\end{rem}

\section{Hypergeometric functions over different finite fields}
\subsection{$l$-adic \'{e}tale cohomology }
Let $X$ be a smooth hypersurface in $\P^{n-1}$ over $\F_q$ and write $\ol{X}=X\otimes_{\F_q}\ol{\F_q}$. Let $l$ be a prime number with $l\neq p$. Write $H^m(\ol{X},\ol{\Q_l})$ for the $l$-adic \'{e}tale cohomology of $\ol{X}$. By the weak Lefschetz theorem (cf. \cite[p.106]{FK}), the map
\begin{equation*}
H^m(\ol{\P^{n-1}},\ol{\Q_l})\longrightarrow H^m(\ol{X},\ol{\Q_l})
\end{equation*}
induced by the embedding $X\hookrightarrow \P^{n-1}$ is an isomorphism for $m<n-2$. Thus, by the Poincar\'{e} duality and the structure of the $l$-adic \'{e}tale cohomology of $\P^{n-1}$, we have 
\begin{equation*}
H^{m}(\ol{X},\ol{\Q_l})=
\begin{cases}
0&(m{\rm\ is\ odd,\ }m\neq n-2),\\ \ol{\Q_l}\Big(-\dfrac{m}{2})&(m{\rm \ is\ even,\ } m\neq n-2,\ 0\leq m\leq 2(n-2)).
\end{cases}
\end{equation*}

Let $H\subset\P^{n-1}$ be a hyperplane and $h\in H^2(\ol{X},\ol{\Q_l}(1))$ be the cohomology class associated with $H\cap X$ (independent of $H$). Then, by the hard Lefschetz theorem (cf. \cite[p.274]{FK}), the composition
\begin{equation*}
H^{n-4}(\ol{X},\ol{\Q_l}(-1))\xrightarrow{\cup h}H^{n-2}(\ol{X},\ol{\Q_l})\xrightarrow{\cup h}H^n(\ol{X},\ol{\Q_l}(1))
\end{equation*}
is an isomorphism, where $\cup$ is the cup product. Thus, we have the primitive decomposition
\begin{equation}\label{prim. decom.}
H^{n-2}(\ol{X},\ol{\Q_l})\cong \begin{cases}
H^{n-2}_{\rm prim}(\ol{X},\ol{\Q_l})&(n\mbox{ is odd}),\vspace{5pt}\\
H^{n-2}_{\rm prim}(\ol{X},\ol{\Q_l})\oplus \ol{\Q_l}\Big(-\dfrac{n-2}{2}\Big)&(n\mbox{ is even}),
\end{cases}
\end{equation} 
where
 \begin{equation*}
 H^{n-2}_{\rm prim}(\ol{X},\ol{\Q_l}):=\text{Ker}\left(H^{n-2}(\ol{X},\ol{\Q_l})\xrightarrow{\cup h}H^n(\ol{X},\ol{\Q_l}(1))\right).
\end{equation*}

Now let $X=D_{\lambda}$ ($\lambda\in\F_q$) with $\lambda^d\prod_{i=1}^n h_i^{h_i}\neq1$. Fix an embedding $\ol{\Q}\hookrightarrow \ol{\Q_l}$ and identify $\widehat{G}$ with ${\rm Hom}(G,\ol{\Q_l}^\times)$. Then, we have a decomposition
\begin{equation*}
H^m(\ol{D_\lambda},\ol{\Q_l})=\bigoplus_{\chi\in\widehat{G}}H^m(\ol{D_\lambda},\ol{\Q_l})(\chi).
\end{equation*}
Here, for a vector space $V$ over $\ol{\Q_l}$ equipped with a $G$-action, we write
\begin{equation*}
V(\chi)=\{v\in V\mid g\cdot v=\chi(g)v\mbox{ for all }g\in G\}.
\end{equation*}

Since the $G$-action fixes $h$, it is compatible with the primitive decomposition \eqref{prim. decom.}. Noting that $H^{2k}(\ol{D_{\lambda}},\ol{\Q_l}(k))$ is generated by $h^k$ if $2k\neq n-2$, we have, for $0\leq k\leq n-2$ ($2k\neq n-2$),
\begin{align*}
H^{2k}(\ol{D_\lambda},\ol{\Q_l})(\chi)=\begin{cases}
\ol{\Q_l}(-k)&(\chi=\1),\\
0&(\chi\neq\1),
\end{cases}
\end{align*}
and
\begin{align*}
H^{n-2}(\ol{D_\lambda},\ol{\Q_l})(\chi)=\begin{cases}
H^{n-2}_{\rm prim}(\ol{D_\lambda},\ol{\Q_l})(\chi)\oplus\ol{\Q_l}\Big(-\dfrac{n-2}{2}\Big)&(\chi=\1, n\text{ is even}),\vspace{5pt}\\
H^{n-2}_{\rm prim}(\ol{D_\lambda},\ol{\Q_l})(\chi)&(\text{otherwise}).
\end{cases}
\end{align*}
The dimension of this cohomology group is determined by Katz.
\begin{lem}[{\cite[Lemma 3.1 (i)]{Katz ano}}] \label{dim of H}
For any $w\in W$,
\begin{equation*}
\dim H_{\rm prim}^{n-2}(\ol{D_{\lambda}},\ol{\Q_l})(\chi^w)=\#\{ m\in\Z/d\Z\mid \d(w+mh)=0\},
\end{equation*}
where $\d(w)$ is as in Theorem \ref{N of dwork 2}. 
\end{lem}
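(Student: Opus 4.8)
The plan is to compute the dimension of $H_{\rm prim}^{n-2}(\ol{D_\lambda},\ol{\Q_l})(\chi^w)$ by relating it to the number of $\F_{q^r}$-rational points through the Grothendieck--Lefschetz trace formula, exploiting the explicit formulas already established in Theorem~\ref{main2} and Corollary~\ref{cor of main2}. The starting point is that, since $D_\lambda$ is non-singular (the hypothesis $\lambda^d\prod_i h_i^{h_i}\neq1$ is in force), the trace formula expresses $N_r(D_\lambda;\chi^w)$ as an alternating sum of traces of $F^r$ on the $\chi^w$-eigenspaces of the cohomology. By the cohomology structure recorded just before the statement, all even-degree pieces with $2k\neq n-2$ contribute only for $\chi=\1$ and give the geometric-series term $\tfrac{1-q^{r(n-1)}}{1-q^r}$ (or its analogue), while the only interesting contribution comes from $H^{n-2}_{\rm prim}(\ol{D_\lambda},\ol{\Q_l})(\chi^w)$. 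Thus the ``hypergeometric part'' of $N_r(D_\lambda;\chi^w)$ equals $(-1)^{n-2}\operatorname{tr}(F^r\mid H^{n-2}_{\rm prim}(\chi^w))$, and the dimension we seek is exactly the number of Frobenius eigenvalues occurring in this eigenspace.

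The key step is to read off this eigenvalue count from the explicit hypergeometric expression. First I would invoke Corollary~\ref{cor of main2} (and its $\F_{q^r}$-version implicit in Corollary~\ref{main3}) to write the primitive trace as $\pm\,j(\p_d^w)^r\,F^w_r(\mathrm{red.};\lambda)$ up to the elementary correction terms, and then expand the reduced hypergeometric function $F^w_{r,\rm red}(\lambda)$ as a finite sum over $\nu\in\widehat{\F_{q^r}^\times}$ whose summands, after applying the Davenport--Hasse relation (Proposition~\ref{DH}), are $r$-th powers of fixed algebraic numbers times $\nu(\lambda^d)$. The crucial observation is that the terms which genuinely depend on $r$ as pure $r$-th powers of Weil numbers of the correct absolute value $q^{(n-2)/2}$ are precisely the Frobenius eigenvalues, and the number of $\nu$ contributing a nonzero, non-degenerate Gauss-sum monomial is governed by how many of the characters $\p_d^{w_i}\nu^{h_i}$ (equivalently, after the reduction, how many of the shifted numerator parameters) fail to be trivial. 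Counting these reduces, via the reduction performed in Corollary~\ref{cor of main2}, to counting the translates $w+mh$ for $m\in\Z/d\Z$ none of whose coordinates vanishes, which is exactly $\#\{m\in\Z/d\Z\mid \d(w+mh)=0\}$.

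Concretely, I would make the bijection between cohomology dimension and this combinatorial count rigorous by comparing, for all $r$, the function $r\mapsto N_r(D_\lambda;\chi^w)$ as an exponential polynomial: its number of Weil-number terms of weight $n-2$ is an invariant independent of $r$, and equals $\dim H^{n-2}_{\rm prim}(\chi^w)$ by the trace formula together with the purity (Deligne's theorem), while the explicit formula exhibits this number as the count of non-degenerate summands in $F^w_{r,\rm red}$. Each translate $w+mh$ with $\d(w+mh)=0$ (all coordinates nonzero) contributes one independent Gauss-sum monomial of the right weight through the Jacobi-sum factor $j(\p_d^{w+mh})$, and the reduction eliminates precisely the degenerate translates, so the two counts agree.

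The main obstacle I anticipate is the bookkeeping at the reduction step: one must verify carefully that the elementary correction terms appearing in Theorem~\ref{main2} and Corollary~\ref{cor of main2} (the $\tfrac{1-q^{r(n-1)}}{1-q^r}$ geometric term and the sum $C$ over partially-vanishing $w'\sim w$) account exactly for the non-primitive cohomology and for the lower-weight pieces, so that none of them is mistakenly counted as a primitive Frobenius eigenvalue and, conversely, no genuine primitive eigenvalue is absorbed into them. Equivalently, the delicate point is matching each degenerate translate $w+mh$ (those with $\d(w+mh)=1$) with a lower-weight contribution rather than a weight-$(n-2)$ one; this is where Deligne's purity is essential, since it is precisely the weight that distinguishes primitive eigenvalues from the spurious terms, and it guarantees that the dimension count is insensitive to the choice of $r$.
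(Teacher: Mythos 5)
Note first that the paper does not prove this lemma at all: it is quoted from Katz \cite[Lemma 3.1 (i)]{Katz ano}, and the paper's own indication of the method (see the proof of Theorem \ref{Main5}, ``by reducing to the case of the Fermat quartic surface ($\lambda=0$) similarly as in the proof of Lemma \ref{dim of H}'') is a degeneration argument: one computes the eigenspace dimensions at the Fermat member $\lambda=0$ and transfers them to all smooth $\lambda$ by the local constancy of the lisse sheaf $R^{n-2}\pi_*\ol{\Q_l}$ with its fiberwise $G$-action over the connected smooth locus $\{\lambda\,:\,\lambda^d\prod_i h_i^{h_i}\neq1\}$. Your proposal instead tries to read the dimension off the explicit point-count formulas at a \emph{general} $\lambda$, and this is where it breaks. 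The crucial step --- ``expand $F^w_{r,\rm red}(\lambda)$ as a sum over $\nu\in\widehat{\F_{q^r}^\times}$ whose summands, after Davenport--Hasse, are $r$-th powers of fixed algebraic numbers'' --- is false: the sum defining $F^w_r$ runs over all $q^r-1$ characters of $\F_{q^r}^\times$, and Proposition \ref{DH} applies only to the norm-induced characters $\widetilde{\eta}$, which form a proportion $(q-1)/(q^r-1)$ of them. The individual Gauss-sum monomials for the remaining $\nu$ are not $r$-th powers of anything fixed, and the number of summands grows with $r$. The fact that $j(\p_d^w)^rF^w_{r,\rm red}(\lambda)=\sum_{i=1}^k\a_i^r$ for a fixed finite set of Weil numbers is itself a consequence of the trace formula \eqref{Nr Lefschetz} together with the cohomological definition of $k$; so using that structure to count $k$ inverts the logical direction and proves nothing. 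In short, purity tells you the trace \emph{is} a sum of $k$ pure $r$-th powers, but gives no handle on what $k$ is; an independent input is required, and your sketch supplies none at general $\lambda$.

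The combinatorial count you aim for is genuinely visible in the point counts only at $\lambda=0$, where by \eqref{N=sum N}, Lemma \ref{ND0} and Proposition \ref{DH} one has exactly $N_r(D_0;\chi^w)=(-1)^n\sum_{w'\sim w,\ \d(w')=0}j(\p_d^{w'})^r$ plus the elementary term for $\chi^w=\1$; since $\{w'\sim w\}=\{w+mh\,:\,m\in\Z/d\Z\}$ and $|j(\p_d^{w'})|=q^{(n-2)/2}$ when $\d(w')=0$, linear independence of the functions $r\mapsto\a^r$ for distinct $\a$, together with purity to separate these from the powers of $q$, yields $\dim H^{n-2}_{\rm prim}(\ol{D_0},\ol{\Q_l})(\chi^w)=\#\{m\,:\,\d(w+mh)=0\}$. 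What your argument is missing is precisely the bridge from $\lambda=0$ to general $\lambda$ (smooth proper base change over the connected parameter locus); with that step inserted, the strategy becomes a correct and essentially self-contained proof of the lemma from results already in the paper, and it is in substance Katz's argument. Your final paragraph correctly identifies the bookkeeping issue with the correction terms $C$ --- indeed the translates $w+mh$ with $\d(w+mh)=1$ produce lower-weight Jacobi sums ($|j(\p_d^{w'})|=q^{(n-z-2)/2}$ when $z\geq1$ coordinates vanish) that cancel against the reduction remainders of Lemma \ref{red formula}, as the computation in the proof of Theorem \ref{N of dwork 2} (ii) shows --- but this cancellation, while reassuring, cannot substitute for the degeneration step.
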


\subsection{Relations among hypergeometric functions over different fields}
By the Grothendieck-Lefschetz trace formula (cf. \cite[Theorem 2.9]{FK}), we have
\begin{align}
N_r(D_{\lambda};\chi)&=\sum_{i=0}^{2(n-2)}(-1)^i{\rm Tr}\big((F^{r})^*\mid H^i(\ol{D_\lambda},\ol{\Q}_l)(\chi)\big).\label{Nr Lefschetz}
\end{align}
Therefore, 
\begin{equation*}
L(D_\lambda,\chi;t)=\det\big(1-F^*t\mid H_{\rm prim}^{n-2}(\ol{D_\lambda},\ol{\Q}_l)(\chi)\big)^{(-1)^{n-1}}
\end{equation*}
when $\chi\neq\1$, and the right-hand side is multiplied with $\prod_{i=0}^{n-2} (1-q^it)^{-1}$ when $\chi=\1$. We can write
\begin{equation}
\det\big(1-F^*t\mid H_{\rm prim}^{n-2}(\ol{D_\lambda},\ol{\Q}_l)(\chi)\big)=\prod_{i=1}^k (1-\a_it),\label{det}
\end{equation}
where $k=\dim H_{\rm prim}^{n-2}(\ol{D_{\lambda}},\ol{\Q_l})(\chi)$. By the Weil conjecture proved by Deligne \cite{deligne}, we have $|\a_i|=q^{(n-2)/2}$ for all $i$. 

Let $e_i$ ($0\leq i\leq k)$ be the elementary symmetric polynomial of degree $i$ in indeterminates $\a_1,\dots,\a_k$ and put $$p_r=\sum_{i=1}^k \a_i^r.$$ There exist unique polynomials $P_{r}(x_1,\dots, x_k)\in\Z[x_1,\dots, x_k]$ ($r\geq 1$) satisfying
\begin{equation*}
p_r=P_{r}(e_1,\dots,e_k).\label{sym. poly.}
\end{equation*}
On the other hand, there exist  polynomials $Q_i(x_1,\dots, x_i)\ (1\leq i\leq k)$ such that
\begin{equation*}
e_i=Q_i(p_1,\dots,p_i)\in\Q[x_1,\dots,x_i].
\end{equation*}
This fact follows for example from Newton's identity
\begin{equation*}
ie_i=\sum_{j=1}^i (-1)^{j-1}e_{i-j}p_j\hspace{20pt}(1\leq i\leq k).
\end{equation*}
If we put
\begin{equation*}
R_r(x_1,\dots, x_k)=P_r\big(Q_1(x_1), Q_2(x_1,x_2), \dots, Q_k(x_1,\dots,x_k)\big),
\end{equation*}
then
\begin{equation*}
p_r=R_r(p_1,\dots,p_k)
\end{equation*}
for any $r\geq 1$. 

For $\a_i$ as in \eqref{det}, we have 
\begin{equation}N_r(D_\lambda;\chi)=\begin{cases}
(-1)^np_r&(\chi\neq\1),\vspace{5pt}\\
(-1)^np_r+\dfrac{1-q^{n-1}}{1-q}&(\chi=\1).
\end{cases}\label{N=a}
\end{equation}
Therefore, it follows from Theorem \ref{main2} that we can write $F^w_r(\lambda)$ in terms of $F_1^w(\lambda),\dots,F_k^w(\lambda)$ and Jacobi sums, where $k$ is determined by Lemma \ref{dim of H}.

\subsection{Dwork hypersurfaces}
Let us write down such relations more concretely when $D_\lambda$ ($\lambda^d\neq1$) is the Dwork hypersurface of degree $d$. For $w\in W$, we have  by Lemma \ref{dim of H}
\begin{equation*}
k(w):=\dim H^{d-2}(\ol{D_\lambda},\ol{\Q_l})(\chi^w)=d-\#\{w_i\mid i=1,\dots,d\}.
\end{equation*}
Let $F_{r,\rm red}^w(\lambda)$ be as in subsection 3.5. By Theorem \ref{N of dwork 2} and \eqref{N=a},
\begin{equation}
j'(\p_d^w)^{r}F_{r,\rm red}^w(\lambda)=\sum_{i=1}^{k(w)} \a_i^r,\quad j'(\p_d^w):=\begin{cases}
j(\p_d^w)^{1-\d(w)}&(\chi^w=\1),\vspace{5pt}\\
q^{\d(w)}j(\p_d^w)&(\chi^w\neq\1).
\end{cases}\label{F=Tr}
\end{equation}
Hence we obtain the following.
\begin{thm}\label{main 4}
For any $w\in W$, $\lambda \in \F_q-\mu_d$ and $r\geq 1$, we have
\begin{equation*}
F_{r,\rm red}^w(\lambda)=R_r(F_{1,\rm red}^w(\lambda),\dots,F_{k(w),\rm red}^w(\lambda)).
\end{equation*}
\qed
\end{thm}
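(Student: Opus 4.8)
The plan is to read off the statement from the identity $p_r=R_r(p_1,\dots,p_{k(w)})$ already established in subsection 4.2, combined with the trace formula \eqref{F=Tr}, the only extra ingredient being a weighted-homogeneity property of $R_r$ that lets the Jacobi-sum factors cancel. Throughout write $k=k(w)$ and $j'=j'(\p_d^w)$, and recall that $j'\neq0$: it is a nonzero power of $q$ times the Jacobi sum $j(\p_d^w)=g(\p_d^{w_1})\cdots g(\p_d^{w_d})/q$, each Gauss sum being nonzero by \eqref{abs.val. of g}.

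First I would substitute \eqref{F=Tr}, namely $p_s=j'^{\,s}F_{s,\rm red}^w(\lambda)$ for every $s\geq1$, into the universal relation $p_r=R_r(p_1,\dots,p_k)$. This gives
$$j'^{\,r}F_{r,\rm red}^w(\lambda)=R_r\big(j'F_{1,\rm red}^w(\lambda),\,j'^{\,2}F_{2,\rm red}^w(\lambda),\,\dots,\,j'^{\,k}F_{k,\rm red}^w(\lambda)\big).$$
It therefore suffices to pull the factors $j'^{\,s}$ out of $R_r$ with total weight $r$, that is, to show that $R_r$ is weighted-homogeneous of degree $r$ when the $s$-th variable is assigned weight $s$:
$$R_r(t x_1,t^2x_2,\dots,t^kx_k)=t^rR_r(x_1,\dots,x_k).$$
Granting this with $t=j'$ and $x_s=F_{s,\rm red}^w(\lambda)$, the right-hand side above equals $j'^{\,r}R_r(F_{1,\rm red}^w(\lambda),\dots,F_{k,\rm red}^w(\lambda))$, and dividing by $j'^{\,r}\neq0$ yields the theorem.

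The one point to verify is thus the weighted homogeneity, which is inherited from the gradings of $P_r$ and the $Q_i$. Under the scaling $\a_i\mapsto t\a_i$ the power sum $p_s=\sum_i\a_i^s$ is homogeneous of degree $s$ and the elementary symmetric polynomial $e_s$ is homogeneous of degree $s$; hence from $p_r=P_r(e_1,\dots,e_k)$ the polynomial $P_r$ is weighted-homogeneous of weight $r$ (its $i$-th variable carrying weight $i$), and from $e_i=Q_i(p_1,\dots,p_i)$ each $Q_i$ is weighted-homogeneous of weight $i$ (its $j$-th variable carrying weight $j$). Composing, the $i$-th argument $Q_i(x_1,\dots,x_i)$ of $P_r$ in $R_r=P_r(Q_1,\dots,Q_k)$ scales by $t^i$ under $x_j\mapsto t^jx_j$, and feeding these weight-$i$ inputs into the weight-$r$ polynomial $P_r$ produces the overall factor $t^r$. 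This bookkeeping is the only real content; the rest is the substitution and cancellation above. I expect no genuine obstacle here, the mild care being to confirm that the weight assignments match so that $R_r$ transfers the relation from the power sums to the $F_{s,\rm red}^w$ verbatim, without residual Jacobi-sum factors.
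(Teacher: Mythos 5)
Your proof is correct and is essentially the paper's own argument: the paper states the theorem with an immediate \qed, the proof being precisely the substitution of \eqref{F=Tr} into $p_r=R_r(p_1,\dots,p_{k(w)})$, with the cancellation of the factors $j'(\p_d^w)^s$ left implicit. Your weighted-homogeneity bookkeeping (with the correct observation that $j'(\p_d^w)\neq0$ by \eqref{abs.val. of g}) is exactly the unstated detail, and can be packaged even more briefly by applying the universal identity $p_r=R_r(p_1,\dots,p_{k(w)})$ to the rescaled eigenvalues $\a_i/j'(\p_d^w)$, whose power sums are the $F_{s,\rm red}^w(\lambda)$ themselves.
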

We also have the following consequence of the Weil conjecture.
\begin{thm}
For $w\in W$ with $\d(w)=0$ and $\lambda\in\F_{q^r}-\mu_d$, 
\begin{equation*}
\big|F_{r, \rm red}^w(\lambda)\big|\leq k(w).
\end{equation*}
\end{thm}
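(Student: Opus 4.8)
The plan is to read the bound off the identity \eqref{F=Tr}, which already expresses $j'(\p_d^w)^r F_{r,\mathrm{red}}^w(\lambda)$ as a sum of $k(w)$ Frobenius eigenvalues. The entire content of the estimate is that, under the hypothesis $\d(w)=0$, the normalizing Jacobi sum and each eigenvalue have the same absolute value; after dividing, the triangle inequality then leaves only the count $k(w)$.

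To allow an argument $\lambda\in\F_{q^r}$ that need not lie in $\F_q$, I would run the argument leading to \eqref{F=Tr} with $\F_{q^r}$ as the ground field and $\widetilde{\p}_d=\varphi'_d$ as the distinguished character of exact order $d$. This is legitimate because a degree-$d$ Dwork hypersurface is smooth exactly when $\lambda^d\neq1$, which $\lambda\in\F_{q^r}-\mu_d$ guarantees over $\F_{q^r}$. The resulting ($r=1$ over $\F_{q^r}$) form of \eqref{F=Tr} reads
\begin{equation*}
j_{\F_{q^r}}(\widetilde{\p}_d^w)\,F_{r,\mathrm{red}}^w(\lambda)=\sum_{i=1}^{k(w)}\b_i,
\end{equation*}
where $\b_1,\dots,\b_{k(w)}$ are the eigenvalues of the $\F_{q^r}$-Frobenius on $H^{d-2}_{\mathrm{prim}}(\ol{D_\lambda},\ol{\Q_l})(\chi^w)$. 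Here I use that $\d(w)=0$ forces $j'=j_{\F_{q^r}}$: for $\chi^w\neq\1$ the factor $q^{\d(w)}$ is trivial, and for $\chi^w=\1$ the exponent $1-\d(w)$ equals $1$.

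It then remains to match the two absolute values. Because $\d(w)=0$, no $w_i$ vanishes, so each $\widetilde{\p}_d^{w_i}\neq\widetilde{\e}$ and \eqref{abs.val. of g} over $\F_{q^r}$ gives $|g(\widetilde{\p}_d^{w_i})|=(q^r)^{1/2}$; since $j_{\F_{q^r}}(\widetilde{\p}_d^w)$ is the product of these $d$ Gauss sums divided by $q^r$, we obtain $|j_{\F_{q^r}}(\widetilde{\p}_d^w)|=(q^r)^{(d-2)/2}$. On the other side, Deligne's theorem \cite{deligne} over $\F_{q^r}$ (with $n=d$) gives $|\b_i|=(q^r)^{(d-2)/2}$ for every $i$. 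Taking absolute values in the displayed identity and applying the triangle inequality,
\begin{equation*}
(q^r)^{(d-2)/2}\big|F_{r,\mathrm{red}}^w(\lambda)\big|=\Big|\sum_{i=1}^{k(w)}\b_i\Big|\leq\sum_{i=1}^{k(w)}|\b_i|=k(w)\,(q^r)^{(d-2)/2},
\end{equation*}
and cancelling $(q^r)^{(d-2)/2}$ yields $\big|F_{r,\mathrm{red}}^w(\lambda)\big|\leq k(w)$.

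The step needing the most care is the change of base field: one must confirm that $\widetilde{\p}_d$ truly has exact order $d$ on $\F_{q^r}^\times$ and that $j_{\F_{q^r}}(\widetilde{\p}_d^w)$ is precisely the normalization occurring in \eqref{F=Tr} over $\F_{q^r}$. The former is the identity $\varphi'_d=\widetilde{\p}_d$ established before Corollary \ref{main3}, and the latter is reconciled with the Davenport--Hasse relation $g(\widetilde{\eta})=g(\eta)^r$ of Proposition \ref{DH}; together these make the $\F_{q^r}$-theory apply verbatim. It is also worth noting that $\d(w)=0$ is exactly the hypothesis equalizing the two exponents: were some $w_i=0$, the factor $q^{\d(w)}$ and the value $g(\widetilde{\e})=1$ would both reappear and unbalance the estimate.
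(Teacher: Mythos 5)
Your proof is correct and follows essentially the same route as the paper's own: reduce to the case $r=1$ by taking $\F_{q^r}$ as the ground field, read \eqref{F=Tr} as expressing $j(\widetilde{\p}_d^w)F^w_{r,\rm red}(\lambda)$ as a sum of $k(w)$ Frobenius eigenvalues (the hypothesis $\d(w)=0$ both trivializes the factor $q^{\d(w)}$ and, via \eqref{abs.val. of g} and \eqref{gJ=G}, gives $|j(\widetilde{\p}_d^w)|=(q^r)^{(d-2)/2}$), then conclude by Deligne's bound and the triangle inequality. The one slip is at $\lambda=0$: the identity \eqref{F=Tr} rests on Theorem \ref{N of dwork 2}, which assumes $\lambda\neq0$, and it genuinely fails at $\lambda=0$ (there the primitive Frobenius eigenvalues are nonzero Jacobi sums while $F^w_{r,\rm red}(0)=0$ by the convention $\nu(0)=0$), so that case must be split off and handled separately — trivially, as the paper notes, since $F^w_{r,\rm red}(0)=0\leq k(w)$.
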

\begin{proof}
We are reduced to the case when $r=1$. The statement for $\lambda=0$ is clear. By assumption, $|j(\p_d^w)|=q^{(d-2)/2}$ by \eqref{abs.val. of g} and \eqref{gJ=G}. When $\lambda\neq 0$, the theorem follows from \eqref{F=Tr} and that $|\a_i|=q^{(d-2)/2}$ for all $i$.
\end{proof}

Now, let us consider the case when $k(w)=2$. Then Theorem \ref{main 4} becomes
%\begin{equation*}
%R_r(x_1,x_2)=P_r\Big(x_1,\dfrac{x_1^2-x_2}{2}\Big).
%\end{equation*}
$$F_{r,\rm red}^w(\lambda)=P_r\Big(F_{1,\rm red}^w(\lambda),\dfrac{1}{2}(F_{1,\rm red}^w(\lambda)^2-F_{2,\rm red}^w(\lambda))\Big).$$
As an example, we take $w=(a,b,\check{0},1,\cdots,\check{c},\cdots,d-1)$, where $a,b\not\in\{0,c\}$, $c\neq0$ and 
$$ c-a-b=\dfrac{d(d-1)}{2}\mod{d}.$$
Then
\begin{equation*}
F_{r, \rm red}^w(\lambda)=\hF{}{}{\widetilde{\p}_d^a,\widetilde{\p}_d^b}{\widetilde\e,\widetilde{\p}_d^c}{\lambda^d}_{q^r}.
\end{equation*}
%%By (\ref{F=Tr}),
%\begin{equation*}
%F_{1,\rm red}^w(\lambda)^2-F_{2,\rm red}^w(\lambda)=2eJ^2,
%\end{equation*}
%where $e$ is a complex number with $|e|=1$ and 
%\begin{equation*}
%J=\begin{cases}
%q^{-\frac{1}{2}}\cdot j(\p_d^{-i},\p_d^{-j})&(d\mbox{ is odd}),\vspace{5pt}\\
%q^{-1}\cdot j(\p_2,\p_d^{-i},\p_d^{-j})&(d\mbox{ is even}).
%\end{cases}
%\end{equation*}
%Here, note that
%\begin{align*}
%j(\p_d^w)&=\dfrac{\prod_{m=1}^{d-1}g(\p_d^m)}{q}\cdot\dfrac{g(\p_d^i)g(\p_d^j)}{g(\p_d^k)}\\
%&=\begin{cases}
%(-1)^{\frac{(d^2-1)(q-1)}{8d}}q^{\frac{d-3}{2}}j(\p_d^i,\p_d^j)&(d\mbox{ is odd}),\vspace{5pt}\\
%(-1)^{\frac{(d-2)(q-1)}{8}}q^{\frac{d-4}{2}}j(\p_2,\p_d^i,\p_d^j)&(d\mbox{ is even}).
%\end{cases}
%\end{align*}
Thus, we obtain Theorem \ref{2F1 relation}.

\begin{exa}\label{d=3}
Let $d=3$ and $\lambda^3\neq 1$. Then $D_{\lambda}$ is the Hesse elliptic curve and 
$$H^1(\ol{D_\lambda},\ol{\Q_l})=H^1_{\rm prim}(\ol{D_\lambda},\ol{\Q_l})(\1)$$
by Lemma \ref{dim of H}.
%\begin{equation*}
%\widehat{G}=\{\1,\chi^{(0,1,2)},\chi^{(0,2,1)}\}.
%\end{equation*}
By Theorem \ref{N of dwork 2} and \eqref{Jequ(ii)}, we have, for $m=1,2$ and $r\geq 1$,
\begin{align}
N_r\left(D_{\lambda}\right)&=N_r(D_\lambda;\1)\label{ND3}\\
&=(-1)^{\tfrac{q-1}{3}mr+1}j\left(\p_3^m,\p_3^m\right)^r\hF{}{}{\widetilde{\p}_3^m,\widetilde{\p}_3^m}{\widetilde\e,\widetilde{\p}_3^{2m}}{\lambda^3}_{q^r}+1+q^r.\nonumber
\end{align}
In this case, $\a_1\a_2=q$ by the Poincar\'{e} duality. Therefore, we have
\begin{equation*}
F_{r,\rm red}^{w}(\lambda)=P_r\left(F_{1,\rm red}^{w}(\lambda),\dfrac{q}{j(\p_3^m,\p_3^m)^2}\right)
\end{equation*}
for any $r\geq1$.

\end{exa}
\begin{rem}\label{MTY}
The complex periods of $D_{\lambda}\ (\lambda\in \C,\ \lambda^3\neq1)$ are computed in \cite[Theorem 1]{MTY}, one of which is
\begin{equation*}
B\left(\dfrac{1}{3},\dfrac{1}{3}\right)\hF{2}{1}{\frac{1}{3},\frac{1}{3}}{\frac{2}{3}}{\lambda^3}-\lambda B\left(\dfrac{2}{3},\dfrac{2}{3}\right)\hF{2}{1}{\frac{2}{3},\frac{2}{3}}{\frac{4}{3}}{\lambda^3}.
\end{equation*}
Our formula \eqref{ND3} can be regarded as a finite field analogue of this result.
\end{rem}

\subsection{Dwork K3 surfaces}
 We consider the case when $D_\lambda$ is the Dwork hypersurfaces of degree $4$. Suppose that $\lambda^4\neq1$, so that $D_{\lambda}$ is a K3 surface, and $\lambda\neq0$. Now, $\widehat{G}$ consists of 16 characters, which are up to permutation of indices $w_1,\dots,w_4$:
 \begin{align*}
& \1,\\
&\chi^{(1,1,3,3)}\ (3\mbox{ characters, note that }\chi^{(1,1,3,3)}=\chi^{(3,3,1,1)}),\\ 
&\chi^{(1,2,2,3)}\ (12\mbox{ characters}). 
\end{align*}
Note that the number $N_r(D_{\lambda};\chi^w)$ does not change under permutations of indices. Recall that
$$k(0,0,0,0)=3, \quad k(1,1,3,3)=2,\quad k(1,2,2,3)=1.$$
\begin{prop} Let $\lambda\in \F_q^\times$ (we do not assume $\lambda \not \in \mu_4$).
\begin{enumerate}
\item For $m\in\{1,2,3\}$,
\begin{align*}
&N_r(D_{\lambda};\1)=\hF{}{}{\widetilde\e,\widetilde\e,\widetilde\e}{\widetilde{\p}_4,\widetilde{\p}_4^2,\widetilde{\p}_4^3}{\lambda^4}_{q^r}+1+q^r+q^{2r}\nonumber\\
&=j(\p_4^m,\p_4^m,\p_4^m,\p_4^m)^r\hFred{\widetilde{\p}_4^m,\widetilde{\p}_4^m,\widetilde{\p}_4^m,\widetilde{\p}_4^m}{\widetilde\e,\widetilde{\p}_4,\widetilde{\p}_4^2,\widetilde{\p}_4^3}{\lambda^4}_{q^r}+1+q^r+q^{2r}.\nonumber
\end{align*}

\item \begin{equation*}
N_r(D_{\lambda};\chi^{(1,1,3,3)})=(\p_2^r(1-\lambda^2)+\p_2^r(1+\lambda^2))q^r.
\end{equation*}

\item \begin{align*}
N_r(D_{\lambda};\chi^{(1,2,2,3)})=(-1)^{\frac{r(q-1)}{4}}\p_2^r(1-\lambda^4)q^r.
\end{align*}
\end{enumerate}

\end{prop}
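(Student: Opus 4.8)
The proposition computes $N_r(D_\lambda;\chi)$ for the three orbit-types of characters of the Dwork K3 surface ($d=n=4$). My strategy is to feed each character type into Theorem~\ref{N of dwork 2} and then evaluate the resulting reduced hypergeometric function explicitly, exploiting that $k(w)$ is small (at most $3$) and that the reduced functions collapse to closed forms via the special-value identity \eqref{1F0}. Since Proposition~\ref{DH} reduces everything to $r=1$ (the case of general $r$ follows by replacing $\p_d$ with $\widetilde\p_d$ over $\F_{q^r}$), I would treat $r=1$ throughout and restore the tilde/$r$ at the end.

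\textbf{Part (i).} Here $w=(m,m,m,m)$ and $\chi^w=\1$. I would apply Theorem~\ref{N of dwork 2}(i) with $\delta(w)=0$ (since $m\neq0$ when $m\in\{1,2,3\}$), which directly gives the second displayed expression with the Jacobi sum $j(\p_4^m,\p_4^m,\p_4^m,\p_4^m)^r$ and the reduced ${}_4F_3$, plus the geometric tail $\tfrac{1-q^{3r}}{1-q^r}=1+q^r+q^{2r}$. To obtain the \emph{first} displayed expression I would undo the reduction: by Definition~\ref{red F} and Lemma~\ref{dim of H}, $k(0,0,0,0)=3$, and the numerator parameters $\widetilde\p_4^m,\dots$ cancel against three of the denominator parameters, leaving the ${}_3F_2$ with numerator $\widetilde\e,\widetilde\e,\widetilde\e$ and denominator $\widetilde\p_4,\widetilde\p_4^2,\widetilde\p_4^3$. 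The content is that both reduced functions name the same object; I would verify the parameter bookkeeping carefully, noting that the Jacobi-sum prefactor becomes $1$ after the cancellation is accounted for.

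\textbf{Parts (ii) and (iii).} These are the substantive computations. For $w=(1,1,3,3)$ one has $k(w)=2$, and for $w=(1,2,2,3)$ one has $k(w)=1$, so the reduced hypergeometric sums are very short. For (iii), $k=1$ means $F_{1,\rm red}^w(\lambda)$ is a ${}_1F_0$-type object, which I would evaluate using \eqref{1F0} to the form $\a^{-1}(1-\lambda^4)$ for an appropriate quadratic character, producing the factor $\p_2(1-\lambda^4)$; the power of $q$ and the sign $(-1)^{(q-1)/4}$ come from expanding $q^{\delta(w)}j(\p_4^{(1,2,2,3)})$ via \eqref{gJ=G}, the Hasse--Davenport relation, and the standard evaluation $j(\e,\p_4,\p_4^2,\p_4^3)$. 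For (ii), $k=2$ forces the reduced function to be a sum of two terms; I expect it to split as a sum of two ${}_1F_0$ values at $\pm\lambda^2$ — this is the origin of the $\p_2(1-\lambda^2)+\p_2(1+\lambda^2)$ — after using the quadratic Hasse--Davenport multiplication formula \eqref{DHMF for Poc} to rewrite $\p_4$-parameters in terms of $\p_2$ and the variable $\lambda^4=(\lambda^2)^2$.

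\textbf{Main obstacle.} The routine part is the application of Theorem~\ref{N of dwork 2}; the delicate part is matching the Gauss/Jacobi-sum prefactors to the exact signs and powers of $q$ in (ii) and (iii). In particular, pinning down the sign $(-1)^{(q-1)/4}$ in (iii) requires care with the sign conventions in \eqref{Gauss sum thm}, \eqref{gJ=G} and the evaluation of the complete Jacobi sum $j(\e,\p_4,\p_4^2,\p_4^3)$, and I would need the reflection $g(\p_2)g(\p_2)=\p_2(-1)q$ together with $\p_4(-1)=(-1)^{(q-1)/4}$. The second main obstacle is confirming that the length-$2$ reduced function in (ii) factors cleanly into the two $\pm\lambda^2$ contributions rather than into a genuine quadratic in $\lambda$; I expect this to follow from the $m=2$ case of \eqref{DHMF for Poc} combined with \eqref{1F0}, but the algebra linking $\p_4$-twisted Pochhammer symbols to $\p_2$-values at $1\pm\lambda^2$ is where the real work lies.
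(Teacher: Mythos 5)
Your parts (ii) and (iii) follow the paper's route. For (iii), the paper applies Theorem \ref{N of dwork 2}(ii) with $\d(w)=0$ (so $q^{\d(w)}=1$; the power of $q$ comes from the Jacobi sum itself, not from $q^{\d(w)}$), evaluates the resulting ${}_1F_0$ by \eqref{1F0}, and computes $j(\p_4,\p_4^2,\p_4^2,\p_4^3)=(-1)^{(q-1)/4}q$ by \eqref{gJ=G}, \eqref{Gauss sum thm} and $\p_2(-1)=1$; your ingredients are the same, except that the ``standard evaluation of $j(\e,\p_4,\p_4^2,\p_4^3)$'' plays no role here --- that complete Jacobi sum enters only the permutation-of-$(0,1,\dots,d-1)$ case of Theorem \ref{N of dwork 2}, which requires $d$ odd. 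For (ii), the paper uses \eqref{DHMF for Poc} with $m=2$ to collapse the summand to $(\widetilde{\p}_4^2)_{\nu^2}/(\widetilde{\e})_{\nu^2}^\circ=j(\nu^{-2},\widetilde{\p}_4^2)$, expands the Jacobi sum, and inverts the Fourier sum over $\nu$, which forces $u^2=\lambda^4$, i.e.\ $u=\pm\lambda^2$; this is precisely your predicted splitting into the two square roots of $\lambda^4$, and your variant (after DHMF the summand depends on $\nu$ only through $\nu^2$, so pairing $\nu$ with $\nu\p_2$ rewrites the sum as $F(\widetilde{\p}_2;\widetilde{\e};\lambda^2)+F(\widetilde{\p}_2;\widetilde{\e};-\lambda^2)$ and \eqref{1F0} finishes) is a correct equivalent of it.

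The genuine gap is your derivation of the first display in part (i). The two expressions do \emph{not} ``name the same object'' under Definition \ref{red F}: for $w=(m,m,m,m)$ with $m\neq0$, the numerator multiset $\{\widetilde{\p}_4^m,\widetilde{\p}_4^m,\widetilde{\p}_4^m,\widetilde{\p}_4^m\}$ and the denominator $\{\widetilde{\e},\widetilde{\p}_4,\widetilde{\p}_4^2,\widetilde{\p}_4^3\}$ share only the single parameter $\widetilde{\p}_4^m$, so the reduced function is a $_3F_3$-type function with numerator $\widetilde{\p}_4^m,\widetilde{\p}_4^m,\widetilde{\p}_4^m$; there is no cancellation ``against three of the denominator parameters'' producing numerator $\widetilde{\e},\widetilde{\e},\widetilde{\e}$, and the Jacobi prefactor does not become $1$ by any such cancellation --- in Theorem \ref{N of dwork 2}(i) its exponent is $(1-\d(w))r$, which vanishes only when some $w_i=0$. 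The correct argument, which is what the paper's one-line proof of (i) means by ``immediate consequence of Theorem \ref{N of dwork 2}'', is to apply the theorem to \emph{two different representatives} of $\1$: taking $w=(0,0,0,0)$ gives $\d(w)=1$, prefactor $j(\e,\e,\e,\e)^0=1$, and reduction of the common parameter $\widetilde{\e}$ yields the first display; taking $w=(m,m,m,m)$, $m\neq0$, yields the second. The agreement of the two displays is then a consequence of $\chi^{(0,0,0,0)}=\chi^{(m,m,m,m)}$ --- a nontrivial identity between two genuinely different reduced hypergeometric functions, not a formal parameter bookkeeping. Your plan as written would fail at this step, though the repair is immediate once the theorem is invoked at the representative $(0,0,0,0)$.
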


\begin{proof}
(i) Immediate consequence of Theorem \ref{N of dwork 2}.

(ii) By loc. cit., we have
\begin{align*}
N_r(D_{\lambda;4};\chi^{(1,1,3,3)})=j(\p_4,\p_4,\p_4^3,\p_4^3)^r\hF{}{}{\widetilde{\p}_4,\widetilde{\p}_4^3}{\widetilde\e,\widetilde{\p}_4^2}{\lambda^4}_{q^r}.
\end{align*}
First, $j(\p_4,\p_4,\p_4^3,\p_4^3)=q$ by \eqref{gJ=G} and \eqref{Gauss sum thm}. 
Secondly, using \eqref{DHMF for Poc}, \eqref{Gauss sum thm} and \eqref{gJ=G}, we have for $\nu\in\widehat{\F_{q^r}^\times}$,
\begin{align*}
\dfrac{(\widetilde{\p}_4)_\nu(\widetilde{\p}_4^3)_\nu}{(\widetilde\e)_\nu^\circ (\widetilde{\p}_4^2)_\nu^\circ}
=\dfrac{(\widetilde{\p}_4^2)_{\nu^2}}{(\widetilde\e)_{\nu^2}^\circ}=j(\nu^{-2},\widetilde{\p}_4^2),
\end{align*}
and hence,
\begin{align*}
\hF{}{}{\widetilde{\p}_4,\widetilde{\p}_4^3}{\widetilde\e,\widetilde{\p}_4^2}{\lambda^4}_{q^r}&=\dfrac{1}{1-q^r}\sum_\nu j(\nu^{-2},\widetilde{\p}_4^2)\nu(\lambda^4)\\
&=\dfrac{1}{q^r-1}\sum_\nu\sum_{u\in\F_{q^r}^\times}\nu^{-2}(u)\widetilde{\p}_4^2(1-u)\nu(\lambda^4)\\
&=\dfrac{1}{q^r-1}\sum_u\widetilde{\p}_4^2(1-u)\sum_\nu\nu\Big(\dfrac{\lambda^4}{u^2}\Big)\\
&=\widetilde{\p}_4^2(1-\lambda^2)+\widetilde{\p}_4^2(1+\lambda^2).
\end{align*}
Noting that $\widetilde{\p}_4^2(\lambda)=\p_2(\lambda)^{\frac{1-q^r}{1-q}}=\p_2^r(\lambda),$
the formula follows.

(iii) By Theorem \ref{N of dwork 2}, we have
\begin{align*}
N_r(D_{\lambda};\chi^{(1,2,2,3)})=j(\p_4,\p_4^2,\p_4^2,\p_4^3)^r\hF{}{}{\widetilde{\p}_4^2}{\widetilde\e}{\lambda^4}_{q^r},
\end{align*}
and the formula follows by \eqref{1F0}, \eqref{gJ=G} and \eqref{Gauss sum thm}.
\end{proof}

\begin{cor}\label{Z(D4,t)}
Let $m\in\{1,2,3\}$ and put 
\begin{equation*}
Q(t)=1-p_1t+\frac{p_1^2-p_2}{2}t^2-\frac{p_1^3-3p_1p_2+2p_3}{6}t^3,
\end{equation*}
where
$$p_r=j(\p_d^m,\p_d^m,\p_d^m,\p_d^m)^rF_{r,\rm red}^{(m,m,m,m)}(\lambda).$$
Then
\begin{align*}
&Z(D_{\lambda},t)=\dfrac{1}{(1-t)(1-qt)(1-uqt)^3(1-u'qt)^3(1-vqt)^{12}Q(t)(1-q^2t)},
\end{align*}
where $u=\p_2(1-\lambda^2)$, $u'=\p_2(1+\lambda^2)$ and $v=(-1)^{(q-1)/4}uu'$. 
\end{cor}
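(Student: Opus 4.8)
The plan is to expand $Z(D_\lambda,t)=\prod_{\chi\in\widehat{G}}L(D_\lambda,\chi;t)$ over the sixteen characters, grouped into the three permutation orbits listed above, namely $\1$, $\chi^{(1,1,3,3)}$ (three characters) and $\chi^{(1,2,2,3)}$ (twelve characters), and to read off each Artin $L$-factor from the point counts of the preceding proposition via the eigenvalue description \eqref{N=a} and \eqref{det}. Since $N_r(D_\lambda;\chi^w)$ is invariant under permutation of the $w_i$, all three characters in the first nontrivial orbit share one $L$-factor and all twelve in the second share another, so the total exponents $3$ and $12$ are forced once each individual factor is determined. Throughout I will use that $(-1)^n=(-1)^4=1$, so that for $\chi\neq\1$ the formula \eqref{N=a} reads simply $N_r=p_r=\sum_i\a_i^r$, and the power sums directly determine the Frobenius eigenvalues.

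For the orbit of $\chi^{(1,2,2,3)}$ one has $k=1$ by Lemma \ref{dim of H}, and the preceding proposition gives $N_r=(-1)^{r(q-1)/4}\p_2^r(1-\lambda^4)q^r=(vq)^r$, after setting $v=(-1)^{(q-1)/4}\p_2(1-\lambda^4)$ and recording the identity $\p_2(1-\lambda^4)=\p_2(1-\lambda^2)\p_2(1+\lambda^2)=uu'$, so that $v=(-1)^{(q-1)/4}uu'$. Hence the single eigenvalue on the primitive piece is $vq$ and the factor is $(1-vqt)^{-1}$. For the orbit of $\chi^{(1,1,3,3)}$ one has $k=2$, and the proposition gives $N_r=(uq)^r+(u'q)^r$ with $u=\p_2(1-\lambda^2)$ and $u'=\p_2(1+\lambda^2)$; the two eigenvalues are therefore $uq$ and $u'q$, and the factor is $\big((1-uqt)(1-u'qt)\big)^{-1}$.

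The trivial character is the delicate case. Here $\chi=\1$ contributes the extra factor $\prod_{i=0}^{n-2}(1-q^it)^{-1}=(1-t)^{-1}(1-qt)^{-1}(1-q^2t)^{-1}$, arising from $H^0$, the algebraic part of $H^2$, and $H^4$, together with $\det(1-F^*t\mid H^2_{\rm prim}(\1))^{(-1)^{n-1}}=\det(1-F^*t\mid H^2_{\rm prim}(\1))^{-1}$ on the three-dimensional primitive piece ($k=3$). To identify this determinant with $Q(t)$, I will invoke \eqref{F=Tr}: with $p_r=j(\p_4^m,\p_4^m,\p_4^m,\p_4^m)^rF^{(m,m,m,m)}_{r,\rm red}(\lambda)$ equal to the power sum $\sum_{i=1}^3\a_i^r$ of the three eigenvalues, Newton's identities give $e_1=p_1$, $e_2=(p_1^2-p_2)/2$ and $e_3=(p_1^3-3p_1p_2+2p_3)/6$, whence $\prod_{i=1}^3(1-\a_it)=1-e_1t+e_2t^2-e_3t^3=Q(t)$. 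Consequently $L(D_\lambda,\1;t)=\big(Q(t)(1-t)(1-qt)(1-q^2t)\big)^{-1}$.

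Assembling the factors $L(D_\lambda,\1;t)$, $\big((1-uqt)(1-u'qt)\big)^{-3}$ and $(1-vqt)^{-12}$ then yields the asserted formula for $Z(D_\lambda,t)$. The only genuine point requiring care is the trivial-character factor: one must cleanly separate the non-primitive cohomology (producing the $(1-t)(1-qt)(1-q^2t)$ in the denominator) from the primitive part, and confirm via Newton's identities that $Q(t)$ is precisely the characteristic polynomial of Frobenius on $H^2_{\rm prim}(\1)$. The identification of the eigenvalues $vq$ and $uq,u'q$ for the nontrivial orbits, the verification $v=(-1)^{(q-1)/4}uu'$, and the counting of the multiplicities $3$ and $12$ are then routine bookkeeping.
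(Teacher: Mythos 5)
Your proof is correct and is exactly the derivation the paper intends (the corollary is stated without proof as an immediate consequence of the preceding proposition): you expand $Z(D_\lambda,t)=\prod_\chi L(D_\lambda,\chi;t)$ over the three permutation orbits with multiplicities $1$, $3$, $12$, read off $(1-uqt)^{-1}(1-u'qt)^{-1}$ and $(1-vqt)^{-1}$ directly from the closed-form point counts, and identify the trivial-character primitive factor with $Q(t)$ via \eqref{F=Tr}, $k(m,m,m,m)=3$, and Newton's identities, which indeed give $e_1=p_1$, $e_2=(p_1^2-p_2)/2$, $e_3=(p_1^3-3p_1p_2+2p_3)/6$. The factorization $\p_2(1-\lambda^4)=uu'$ and the bookkeeping of the non-primitive factors $(1-t)(1-qt)(1-q^2t)$ are also handled correctly, so there is nothing to add.
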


%\begin{rem}
%For $d=4$, Lemma \ref{dim of H} also follows from Corollary \ref{Z(D4,t)} and the fact that $\dim H^2(\ol{D_{\lambda}},\ol{\Q_l})=22$.
%\end{rem}

In fact, $F_{r,\rm red}^{w}(\lambda)$ hence $Q(t)$ and $Z(D_\lambda,t)$ are determined only by $F_{1,\rm red}^{w}(\lambda)$.
\begin{thm}\label{Main5}Let $P_r(x_1,x_2)$ be the polynomial defined in subsection 4.2 for $k=2$. Let $w=(m,m,m,m)$ ($m\in\{1,2,3\}$). Then, for any $r\geq1$,
\begin{equation*}
F_{r,\rm red}^{w}(\lambda)=\Big(\frac{\p_2(1-\lambda^4)q}{j(\p_4^w)}\Big)^r+P_r\Big(F_{1,\rm red}^{w}(\lambda)-\frac{\p_2(1-\lambda^4)q}{j(\p_4^w)}, \Big(\frac{q}{j(\p_4^w)}\Big)^2 \Big).
\end{equation*}
\end{thm}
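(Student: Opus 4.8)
The plan is to reduce the claim to a statement about the Frobenius eigenvalues $\a_1,\a_2,\a_3$ acting on $H^{2}_{\rm prim}(\ol{D_\lambda},\ol{\Q_l})(\1)$, which is $3$-dimensional since $k(m,m,m,m)=3$. By \eqref{F=Tr} we have $p_r=\sum_{i=1}^3\a_i^r=j(\p_4^w)^r F_{r,\rm red}^w(\lambda)$, so the target formula is equivalent to an identity among power sums once we understand the structure of the three eigenvalues. The key structural input is that one of the three eigenvalues is forced to be a specific quantity coming from the rank-one pieces. Concretely, I expect that one eigenvalue, say $\a_3$, equals $\p_2(1-\lambda^4)q$: this is visible because the \emph{same} cohomology also contains, after restriction to the larger symmetry group, the contribution measured by the $\chi^{(1,2,2,3)}$-type characters whose point count is $(-1)^{(q-1)/4}\p_2^r(1-\lambda^4)q^r$, and more directly because the $k=1$ reductions of $F^w$ produce exactly this algebraic number as a Frobenius eigenvalue.

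First I would isolate $\a_3$. The cleanest route is to compare with the $d=2$ (or the degenerate reduction) behaviour of $F^w_{r,\rm red}$: reducing the hypergeometric function $\hFred{\widetilde{\p}_4^m,\widetilde{\p}_4^m,\widetilde{\p}_4^m,\widetilde{\p}_4^m}{\widetilde\e,\widetilde{\p}_4,\widetilde{\p}_4^2,\widetilde{\p}_4^3}{\lambda^4}$ and tracking which linear factor of \eqref{det} is forced to be $1-\p_2(1-\lambda^4)q\,t/j(\p_4^w)\cdot j(\p_4^w)$ after clearing the Jacobi-sum normalisation. In other words I would show that $\a_3=\p_2(1-\lambda^4)q$ is one of the roots, using \eqref{1F0}, \eqref{gJ=G} and \eqref{Gauss sum thm} exactly as in part (iii) of the preceding proposition, where the same factor $\p_2(1-\lambda^4)q$ appears. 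Once $\a_3$ is pinned down, set $\b_1=\a_1,\b_2=\a_2$ for the remaining two eigenvalues; then $p_r=\a_3^r+(\b_1^r+\b_2^r)$.

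Next I would invoke the Poincar\'e duality constraint on the two-dimensional complement. By the Weil conjecture $|\a_i|=q$ for all $i$, and duality pairs the eigenvalues within the primitive piece so that the remaining pair satisfies $\b_1\b_2=q^2$. Writing $e_1=\b_1+\b_2$ and $e_2=\b_1\b_2=q^2$, the power sum $\b_1^r+\b_2^r$ equals $P_r(e_1,e_2)=P_r(e_1,q^2)$ by the very definition of $P_r$ in subsection 4.2 (the unique polynomial with $P_r(\a+\b,\a\b)=\a^r+\b^r$). Since $p_1=\a_3+e_1$, we recover $e_1=p_1-\a_3=j(\p_4^w)F^w_{1,\rm red}(\lambda)-\p_2(1-\lambda^4)q$, and dividing through by the normalisation $j(\p_4^w)^r$ converts $\a_3^r$ into $(\p_2(1-\lambda^4)q/j(\p_4^w))^r$, $e_1$ into $F^w_{1,\rm red}(\lambda)-\p_2(1-\lambda^4)q/j(\p_4^w)$, and $e_2=q^2$ into $(q/j(\p_4^w))^2$, yielding exactly the stated identity.

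The main obstacle will be rigorously justifying that $\a_3=\p_2(1-\lambda^4)q$ is genuinely a Frobenius eigenvalue on the primitive cohomology and that the complementary pair is self-dual with product $q^2$, rather than this being an accident of the $r=1$ computation. I would settle this by arguing at the level of the characteristic polynomial \eqref{det}: the splitting of the degree-three polynomial into the linear factor attached to $\a_3$ and a quadratic factor is canonical because it corresponds to a Frobenius-stable, duality-respecting decomposition of $H^2_{\rm prim}(\chi^w)$. Establishing this decomposition cohomologically (e.g. via the compatibility of the $G$-action and the cup-product pairing, exactly as \eqref{prim. decom.} was used) is where the real content lies; the subsequent manipulation of symmetric functions is purely formal given the definition of $P_r$.
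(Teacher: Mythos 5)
Your final symmetric-function step is sound and matches the paper: once one knows that one eigenvalue of $F^*$ on $H^2_{\rm prim}(\ol{D_\lambda},\ol{\Q_l})(\1)$ is $\a_1=\p_2(1-\lambda^4)q$ and that the remaining pair satisfies $\a_2\a_3=q^2$, the identity follows from \eqref{F=Tr}, the defining property of $P_r$, and its weighted homogeneity $P_r(cx_1,c^2x_2)=c^rP_r(x_1,x_2)$, which is what lets you divide through by $j(\p_4^w)^r$. The genuine gap is precisely the input you defer to the end, and neither of your proposed justifications works. First, the fact that $(-1)^{(q-1)/4}\p_2^r(1-\lambda^4)q^r$ appears as the Frobenius trace on the $\chi^{(1,2,2,3)}$-eigenspaces says nothing about eigenvalues on the $\1$-eigenspace: distinct character eigenspaces are linearly disjoint summands of $H^2$, and no ``restriction to the larger symmetry group'' transports an eigenvalue from one to the other; likewise there is no meaningful ``$k=1$ reduction'' of $F^w$ for $w=(m,m,m,m)$ that isolates a linear factor of \eqref{det}. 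Second, Poincar\'e duality does not ``pair the eigenvalues'' in the way you need: on the self-dual $\1$-eigenspace it only forces the eigenvalue multiset to be stable under $\a\mapsto q^2/\a$ (equivalently, given $|\a_i|=q$, under complex conjugation), which neither singles out a rational eigenvalue nor excludes the configuration of three real eigenvalues $\pm q$ with a complementary pair of product $-q^2$. No canonical Frobenius-stable, duality-respecting splitting of the $3$-dimensional eigenspace is available to be ``established cohomologically''; asserting it is assuming the conclusion.

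The paper closes this gap with a global determinant argument that your sketch lacks. Since $\det(1-F^*t\mid H^2(\ol{D_\lambda},\Q_l))\in\Q[t]$ and all factors for $\chi\neq\1$ are known explicitly (the preceding Proposition), the cubic $Q(t)$ has rational coefficients, hence one real root $\a_1$ and a pair with $\a_2\a_3=\a_2\ol{\a_2}=q^2$. Then the leading coefficient of $\det(1-F^*t\mid H^2)$ is shown to be $q^{22}t^{22}$ by degenerating to the Fermat quartic $\lambda=0$, in the same way as in the proof of Lemma \ref{dim of H}; comparing with the full factorization of Corollary \ref{Z(D4,t)} forces $\a_1\a_2\a_3=\p_2(1-\lambda^2)\p_2(1+\lambda^2)q^3$, whence $\a_1=\p_2(1-\lambda^4)q$ (a fact also proved in \cite{Asakura}). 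Some such computation of $\det F^*$ on the whole of $H^2$ -- or an equivalent external input -- is indispensable: without it, the two statements you need, $\a_1=\p_2(1-\lambda^4)q$ and $\a_2\a_3=q^2$, are exactly the content of the theorem rather than consequences of duality.
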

\begin{proof}
Since $\det(1-F^*t\mid H^2(\ol{D_\lambda},\Q_l))\in \Q[t]$ (independent of $l$), it follows that $Q(t)\in\Q[t]$. Hence we can write 
$$Q(t)=(1-\a_1t)(1-\a_2t)(1-\a_3t)$$
with $\a_1\in\R$, $\ol{\a_2}=\a_3$ and $|\a_i|=q$ for all $i$. It is known that the highest term of $\det(1-F^*t\mid H^2(\ol{D_\lambda},\Q_l))$ is 
$q^{22}t^{22}$. One can show this fact by reducing to the case of the Fermat quartic surface ($\lambda=0$) similarly as in the proof of Lemma \ref{dim of H}. Hence $\a_1\a_2\a_3=\a_1 q^2=uu'q^3$, i.e. $\a_1=\p_2(1-\lambda^4)q$ (this fact is also proved in \cite{Asakura}). Since 
\begin{equation*}
F_{r,\rm red}^{w}(\lambda)=\dfrac{\a_1^r+\a_2^r+\a_3^r}{j(\p_d^w)^r},\label{NrD4}
\end{equation*}
we obtain the theorem.
\end{proof}

\begin{rem}
When $m=2$, we have $j(\p_4^w)=q$. 
\end{rem}

%\begin{rem}
%By Corollary \ref{Z(D4,t)} and $\a_1=\p_2(1-\lambda^4)q=uu'q$, the sign of the functional equation of $Z(D_{\lambda};t)$ is $+$, that is
%\begin{equation*}
%Z\Big(D_{\lambda},\dfrac{1}{q^2t}\Big)=q^5t^5Z(D_{\lambda},t).
%\end{equation*}
%\end{rem}
%Acknowledgement%
\section*{Acknowledgments}

This paper basis on the author's master's thesis. I would like to thank my supervisor Noriyuki Otsubo for his constant support and encouragements. I would also like to thank Shigeki Matsuda and Ryojun Ito for helpful discussions and advice.

%\section*{Data availability}
%Data sharing not applicable to this article as no datasets were generated or analysed during the current study.

\end{document}